\newtheorem{lemma}{Lemma}[subsection]
\newtheorem{prop}[lemma]{Proposition}
\newtheorem{theorem}[lemma]{Theorem}
\newtheorem{coro}[lemma]{Corollary}
\newtheorem{rema}[lemma]{{\rm\bf Remark}}
\newtheorem{Def}[lemma]{Definition}
\newtheorem{Ques}{Problem}
\newtheorem{ex}[lemma]{Example}
\newcommand{\Lin}{{lr\rm Lin}}
\newcommand{\Hom}{{\rm Hom}}
\newcommand{\charr}{{\rm char\,}}
\newcommand{\gd}{{\rm gldim\,}}
\newcommand{\kk}{{\bf k}}
\DeclareMathOperator{\Ker}{{Ker}}
\newcommand{\Ext}{{\rm Ext}}
\newcommand{\Ann}{{\rm Ann}}
\renewcommand{\Im}{{\rm Im\,}}
\newcommand{\HH}{{\mathcal H}}
\newcommand{\FF}{{\mathcal F}}
\newcommand{\GG}{{\mathcal G}}
\newcommand{\TT}{{\mathcal T}}
\newcommand{\bZ}{\mathbb{Z}}
\newcommand{\bD}{{\bf D}}
\newcommand{\vp}{\varphi}
\newcommand{\nds}{{\not\subset}^{\oplus}}
\newcommand{\ds}{\subset^{\oplus}}
\newcommand{\ot}{\otimes}
\renewcommand{\le}{\leqslant}
\renewcommand{\ge}{\geqslant}
\newenvironment{Ex}{\begin{ex}
\rm }{\flushright
$\Box$\end{ex}}
\numberwithin{equation}{subsection}
\begin{document}

\title{Homogeneous triples for homogeneous algebras with two relations.}

\author{Eduardo do Nascimento Marcos, Yury Volkov 
\footnote{The first named author was supported by the tematic project of Fapesp number  2014/09310-5.
The second  named author was supported by the RFBR research project number 18-31-20004
and had a trip to S\~ao Paulo paid by the Fapesp process number 2018/07046-0 during working on this paper.}}

\maketitle
\begin{abstract}
In our preceding paper we have introduced the notion of an $s$-homogeneous triple. In this paper we use this technique to study connected $s$-homogeneous algebras with two relations.
For such algebras, we describe all possible pairs $(A,M)$, where $A$ is the $s$-Veronese ring and $M$ is the $(s,1)$-Veronese bimodule of the $s$-homogeneous dual algebra.
For each such a pair we give an intrinsic characterization of algebras corresponding to it. Due to results of our previous work many pairs determine the algebra uniquely up to isomorphism.
Using our partial classification, we show that, to check the $s$-Koszulity of a connected $s$-homogeneous algebras with two relations, it is enough to verify an equality for Hilbert series or to check the exactness of the generalized Koszul complex in the second term.
For each pair $(A,M)$ not belonging to one specific series of pairs, we check if there exists an $s$-Koszulity algebra corresponding to it.
Thus, we describe a class of possible $\Ext$-algebras of $s$-Koszul connected algebras with two relations and realize all of them except a finite number of specific algebras as $\Ext$-algebras.
Another result that follows from our classification is that an $s$-homogeneous algebra with two dimensional $s$-th component cannot be $s$-Koszul for $s>2$.
\end{abstract}

\section{Introduction}

All the algebras under consideration are graded algebras of the form $\Lambda = T_\kk V/I$, where $\kk$ is an algebraically closed field, $T_\kk V$ is the tensor algebra of some finite dimensional space $V$
with the standard grading, and $I$ is an ideal generated by some subspace $W\subset T_\kk V$ such that all the elements of $W$ are homogeneous and the same degree.
The case $\dim_{\kk}W=1$ was studied in \cite{Dic} (see also \cite{B2, MV}). This paper deals with the case $\dim_{\kk}W=2$ whose consideration was initiated in \cite{MV}.
For this purpose we will use the technique of $s$-homogeneous triples introduced in the last mentioned paper.

The $\Ext$-algebra of the algebra $\Lambda$ as above is the graded algebra $\bigoplus_{i\geq 0} \Ext^i_{\Lambda}(\kk, \kk)$.
The notion of a Koszul algebra was introduced by S.~Priddy  in \cite{P}. All Koszul algebras are quadratic and they appear in 
pairs. The $\Ext$-algebra of a Koszul algebra $\Lambda$, is another time a Koszul algebra and its $\Ext$-algebra is the original
one. If we know that an algebra $\Gamma$ is the $\Ext$-algebra of a Koszul algebra $\Lambda$ then one may
recuperate $\Lambda$ from $\Gamma$ by taking its quadratic dual. Note that even if a quadratic algebra is not Koszul, then it can be recovered from its quadratic dual algebra that in the non Koszul case is not anymore isomorphic to the $\Ext$-algebra. Note also that if we have a quadratic algebra, then it is an $\Ext$-algebra of some quadratic algebra if and only if it is Koszul.

The notion of a Koszul algebra was generalized to the $s$-homogeneous case by Berger in \cite{B} for connected algebras.
Later the definition was rewritten for the case of an arbitrary quiver in \cite{GMMZ}.
This notion turned out to be important.
For example, it was shown in \cite{BS} that if $\Lambda$ is an algebra defined by a homogeneous potential of degree $s+1$ and $\dim_{\kk} W=\dim_{\kk} V$, then $\Lambda$ is 3-Calabi-Yau if and only if it is $s$-Koszul and $\dim_{\kk}\big(W\ot_{\kk}V\cap V\ot_{\kk}W\big)=1$.

Calabi-Yau algebras  were introduced in \cite{Ginz} while it is difficult to say where their twisted generalization that we will consider in the current paper was introduced. Twisted Calabi-Yau algebras were considered in many papers, see \cite{BS, BSW, GK, LR, LWW, RRZ}. The connections between Calabi-Yau algebras and algebras defined by a potential was noted in \cite{Ginz}, where it was proved that $3$-Calabi-Yau algebras that appear in some sense in a natural way are algebras defined by a potential. It was proved in \cite{Boc} that any graded $3$-Calabi-Yau algebra is defined by a potential. Recently, quadratic $3$-Calabi-Yau algebras with three generators and cubic $3$-Calabi-Yau algebras with two generators were classified in \cite{MS,MU}. All the algebras with the same number of generators and relations defined by a twisted potential were classified in \cite{Iud}. Twisted potentials that will appear in this paper were considered in \cite{BSW}. It is proved in \cite{RRZ} that twisted $3$-Calabi-Yau algebras are exactly Artin-Schelter regular algebras of dimension $3$. These algebras were introduced in \cite{AS} and were studied in many works since then. In particular, it was shown in \cite{Lev} that such algebras are always domains.

In this  paper an element of the free algebra $T_\kk V$ will be called a polynomial.

The main objective of our work \cite{MV} was to give a method of recovering an $s$-Koszul algebra from its $\Ext$-algebra or, more generally, recover an $s$-homogeneous algebra from the associated {\it $s$-homogeneous triple}. The last notion was introduced in the same paper and gives the $\Ext$-algebra with its $A_{\infty}$-structure in the $s$-Koszul case.

Using these ideas, we have shown in \cite{MV} that the algebra $\Lambda=\kk\langle x_1,\dots,x_m\rangle/(f)$, where $f$ is a polynomial of degree $s$, is not $s$-Koszul if and only if $f\not=g^s$ for any linear polynomial $g$ and there are some polynomial $g_1$, $g_2$ and $h$ of nonzero degree such that $f=g_1h=hg_2$. We have also shown that, in the case $s>2$, the algebra  $\Lambda^!$ is $s$-Koszul only in the case $m=1$ (i.e. in the case $\Lambda=\kk[x]/(x^s)$). Moreover if $\Lambda=\kk\langle x_1,\dots,x_m\rangle/(f)$ is $s$-Koszul and $f$ is not of the form $g^s$ for some linear polynomial $g$  then the global dimension of $\Lambda$ is two. If $\Lambda=\kk\langle x_1,\dots,x_m\rangle/(g^s)$ with $g$ a linear polynomial then $\Lambda$ has infinite global dimension. All these results in fact can be deduced from \cite{Dic}, where the Poincar\'e series of algebras with one relation were computed and, in particular, it was shown that the global dimension of $\Lambda$ is infinite in the not $s$-Koszul case.

In the current paper we obtain analogous results for an algebra $\Lambda$ of the form $\Lambda=\kk\langle x_1,\dots,x_m\rangle/(f_1, f_2)$, where $f_1$ and $f_2$ are linearly independent polynomials  of degree $s$. The study of such algebras was initiated in \cite{MV}, where it was shown that the $s$-Veronese ring of $\Lambda^!$ is a quadratic algebra with two generators and at least two relations and the algebras $\Lambda$, for which this ring is isomorphic to $\kk\langle x,y\rangle/(xy,yx)$ or $\kk\langle x,y\rangle/(x^2,y^2)$ were classified. In the last mentioned cases $\Lambda$ is $s$-Koszul of infinite global dimension and $\Lambda^!$ is not $s$-Koszul except the case $s=2$.
Here we will show that the last fact is not accidental and $\Lambda^!$ is not $s$-Koszul if $s>2$ and $\Lambda$ is an $s$-homogeneous algebra defined by two relations (see Theorem \ref{nkoz}). Our main result is the classification of pairs $\big((\Lambda^!)^{(s)}),(\Lambda^!)^{(s,1)}\big)$ that appear for $s$-homogeneous algebras $\Lambda$ defined by two relations. Here ${\Lambda}^{(s)}$ and ${\Lambda}^{(s,1)}$ denote the $s$-Veronese ring and the $(s,1)$-Veronese bimodule of the graded algebra ${\Lambda}$.
For the remaining pairs we give an intrinsic description of $s$-homogeneous algebras corresponding to them. Also we discuss the Koszul property of the obtained $s$-homogeneous algebras.
Some  of such pairs determine the corresponding algebra up to isomorphism and we give descriptions of corresponding algebras in such cases.
For the convenience of the reader we collect all our classification results in the last section, where we give also some interesting consequences and ask questions related to the subject. Thus, if one is interested in the final form of our results, he can go directly to Subsection \ref{main_res}.
One of the results we obtain here is the description of all possible $\Ext$-algebras of $s$-Koszul algebras. Nevertheless for each $s$ there is a finite number $s$-homogeneous algebras whose $s$-Koszulity we will not check in this paper. Each of these algebras has a pair $\big((\Lambda^!)^{(s)}),(\Lambda^!)^{(s,1)}\big)$ that determines the original algebra up to isomorphism, and thus we are not able to check if such a pair can be realized as an $\Ext$-algebra. 
  During our manipulations, we will give many interesting examples of $s$-Koszul algebras. In particular, these examples include many $3$-Calabi-Yau and twisted $3$-Calabi-Yau algebras algebras defined by a twisted potential.
Note that the classification of Koszul algebras with two relations is presented in \cite[Theorem 3.10.6]{Ufn} (see also \cite{Bac,Ufn0}) and essentially we make a big step towards the analogous classification in the case where $s>2$. Thus, though major part of our results can be applied to study Koszul algebras too, we will consider only  the case $s>2$.

\section{Preliminary results}

In this section we recall some definitions and results concerning $s$-homogeneous algebras, $s$-homogeneous triples, $s$-Koszul algebras, $3$-Calabi-Yau algebras and algebras defined by a potential, give the classification of quadratic algebras with two generators and recall the classification of indecomposable modules over them. In the last part of this section we describe the general ideas that we will use.

\subsection{$s$-homogeneous algebras and $s$-homogeneous triples}

We fix some notation during the paper. First of all, we fix some algebraically closed ground field $\kk$. All the algebras and vector spaces  in this paper are over $\kk$. So we write simply $\otimes$ instead of $\otimes_\kk$. If $V$ is a graded space, then $V_i$ denotes the subspace of $V$ formed by the elements of degree $i$. Also $V[t]$ denotes the shift of $V$ by $t$, i.e. the graded space that coincides with $V$ as a nongraded space and has the grading defined by the equality $V[t]_i=V_{i+t}$. All the modules in this paper are right modules.
In fact, we will need often to deal with bimodules and with left and right module structures on them, but to talk about the left structure we will simply consider a module over the opposite algebra and will add ${}^{\rm op}$ to the notation of the  algebra. On the other hand, if $M$ is an $A$-bimodule, then we will write $M_A$ to emphasize that we are talking about its right structure and ${}_AM$ for the left structure.
If $A$ is an algebra, $M$ is an $A$-module, and $X$ is a subset of $M$, then $\langle X\rangle_A$ denotes the $A$-submodule of $M$ generated by the set $X$. All graded algebras will be nonnegatively graded by $\mathbb{Z}$.
All the modules that we will consider in this paper will be finitely generated and we will often assume this condition without mentioning.

We will use the standard grading on $\kk\langle x_1,\dots,x_m\rangle$ defined by the equality $\deg(x_i)=1$ ($1\le i\le m$). Throughout the paper we consider the algebra $\Lambda=\kk\langle x_1,\dots,x_m\rangle/I$, where $I$ is some ideal of $\kk\langle x_1,\dots,x_m\rangle$ generated by polynomials of the fixed degree $s> 2$. Such an algebra is called an {\it $s$-homogeneous connected algebra}.
Let $x_1^*,\dots,x_m^*$ be the basis of $\Lambda_1^*=\Hom_\kk(\Lambda_1,\kk)$ dual to the basis
$x_1,\dots,x_m$ of $\Lambda_1$. Then the {\it $s$-homogeneous dual} of $\Lambda$ is the algebra $\Lambda^!=\kk\langle x_1^*,\dots,x_m^*\rangle/I^!$, where $I^!$ is generated by such $f\in \kk\langle x_1^*,\dots,x_m^*\rangle_s\cong \kk\langle x_1,\dots,x_m\rangle_s^*$ that $f(I_s)=0$.
Here we use the grading on $\kk\langle x_1^*,\dots,x_m^*\rangle$ defined by the equalities $\deg(x_i^*)=1$ ($1\le i\le m$). Note that the algebras $\Lambda$ and $\Lambda^!$ are graded in the natural way and that $\Lambda^!$ is an $s$-homogeneous connected algebra. Moreover, $(\Lambda^!)^!\cong\Lambda$.

\begin{Def}{\rm 
Given a graded algebra $A$, we denote by $A^{(r)}$ the {\it $r$-Veronese ring} of $A$, i.e. the graded algebra $A^{(r)}=\oplus_{i\ge 0}A^{(r)}_i$, where $A^{(r)}_i=A_{ri}$ and the multiplication of $A^{(r)}$ is induced by the multiplication of $A$.
Also we define the {\it $(r,t)$-Veronese bimodule} $A^{(r,t)}$ as a graded $A^{(r)}$-bimodule $A^{(r,t)}=\oplus_{i\ge 0}A^{(r,t)}_i$, where $A^{(r,t)}_i=A_{ri+t}$ and the $A^{(r)}$-bimodule structure on $A^{(r,t)}$ is induced by the multiplication of $A$.
Note that the multiplication of $A$ induces an $A^{(r)}$-bimodule homomorphism from $\left(A^{(r,1)}\right)^{\ot_{A^{(r)}}r}$ to $A^{(r)}$. We denote this homomorphism by $\phi_{A}^{(r)}$. 
}
\end{Def}

Given a graded algebra $A$, a graded $A$-module $M$ is called {\it linear up to the $n$-th degree} if there exists  a projective resolution of $M$ in the category of graded $A$-modules
$$
M\leftarrow P_0\leftarrow P_1\leftarrow\cdots\leftarrow P_n\leftarrow\cdots
$$
such that $P_i$ is generated in degree $i$, i.e. $P_i=(P_i)_iA$, for $0\le i\le n$. We will say that $M$ is {\it linearly presented} if it is linear up to the first degree.
We will denote by $J(A)$ the ideal $\oplus_{i>0}A_i$ of $A$.

We define next the category of $s$-homogeneous triples, which will be useful for us.

\begin{Def}\label{str}
{\rm
An {\it $s$-homogeneous  triple} is a triple $(A,M,\vp)$, where $A$ is a quadratic  $\kk$-algebra, $M$ is a graded $A$-bimodule linearly presented over $A$ and over $A^{\rm op}$, and $\vp:M^{\ot^{s}_A }\rightarrow A[1]$ is a homomorphism of graded $A$-bimodules such that 
\begin{enumerate}
\item $\Im\vp=J(A)[1]$;
\item $1_M\ot_A\vp=\vp\ot_A 1_M:M^{\ot_A^{s+1}}\rightarrow M[1]$;
\item $\Ker(1_M\ot_A\vp)=\Ker\vp\ot_A M+M\ot_A\Ker\vp$;
\item $\Ker(\vp\ot_A\vp)=\sum\limits_{i=0}^sM^{\ot_A^i}\ot_A\Ker\vp\ot_A M^{\ot_A^{s-i}}$.
\end{enumerate}
}
\end{Def}

Let $(A,M,\vp)$ and $(B,L,\psi)$ be $s$-homogeneous  triples. A {\it morphism} from $(A,M,\vp)$ to $(B,L,\psi)$ is a pair $(f,g)$, where $f:A\rightarrow B$ is a morphism of graded algebras and $g:M\rightarrow L$ is a morphism of graded $A$-bimodules such that $f\vp=\psi g^{\ot s}$. Here the $A$-bimodule structure on $L$ is induced by the map $f$.
%Do we need this???
Note that if $(A,M,\vp)$ is an $s$-homogeneous triple and $n\ge s$ is an integer, then the map $1_{M^{\ot_A^i}}\ot_A\vp\ot_A1_{M^{\ot_A^{n-s-i}}}:M^{\ot_A^n}\rightarrow M^{\ot_A^{n-s}}[1]$ does not depend on $0\le i\le n-s$ due to the second item of Definition \ref{str}. For simplicity we denote this map by $\vp$ too. So, if $n\ge ks$, then we have a graded $A$-bimodule homomorphism $\vp^k:M^{\ot_A^n}\rightarrow M^{\ot_A^{n-ks}}[s]$. As usually, $M^{\ot_A^0}=A$ everywhere.

It was shown in \cite{MV} that the category of $s$-homogeneous triples is equivalent to the category of $s$-homogeneous algebras.
The corresponding equivalences $\FF$ and $\GG$ can be described on objects in the following way.
If $\Lambda$ is an $s$-homogeneous algebra, then $\FF(\Lambda)=(\Lambda^{(s)},\Lambda^{(s,1)},\phi^{(s)}_{\Lambda})$. 
Given an $s$-homogeneous triple $(A,M,\vp)$, one has $\GG(A,M,\vp)=T_\kk M_0/\big( (\Ker\vp)_0\big).$ Note also that $\GG(A,M,\vp)^!=T_\kk M_0^*/(\vp^*A_1^*)$.

The main ingredient of our approach, which we state in the following lemma, is \cite[Corollary 5.3]{MV}. 

\begin{lemma}\label{main_strip}
If $(A,M,\vp)$ is an $s$-homogeneous triple, then there exists a graded $A$-bimodule $S$ concentrated in degree $0$ and an isomorphism of graded $A$-bimodules $\theta:M^{\ot_A^s}\cong S\oplus J(A)[1]$ such that $\vp$ equals to the composition
$$
M^{\ot_A^s}\xrightarrow{\theta} S\oplus J(A)[1]\twoheadrightarrow J(A)[1]\hookrightarrow A[1],
$$
where the second map is the canonical projection on the second summand and the third arrow is the canonical inclusion.
\end{lemma}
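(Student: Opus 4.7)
My plan is to prove the lemma in two steps: (a) show that $S := \Ker\vp$ is concentrated in degree $0$, and (b) split the resulting short exact sequence $0 \to S \to M^{\ot_A^s} \xrightarrow{\bar\vp} J(A)[1] \to 0$ of graded $A$-bimodules, where $\bar\vp$ denotes $\vp$ factored through its image, which is $J(A)[1]$ by condition (1).

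For step (a), since $M$ is linearly presented as both a left and a right $A$-module, $M$ is generated in degree $0$ on both sides, and so $(M^{\ot_A^s})_i = A_i \cdot (M^{\ot_A^s})_0 = (M^{\ot_A^s})_0 \cdot A_i$ for every $i \ge 0$. Combined with the quadratic generation $A_{i+1} = A_i \cdot A_1$, this yields a pair of surjections
\[
A_i \otimes_\kk (M^{\ot_A^s})_0 \twoheadrightarrow (M^{\ot_A^s})_i \xrightarrow{\vp_i} A_{i+1},
\]
whose composition sends $a \otimes y$ to $a \cdot \vp_0(y)$. Showing that $\vp_i$ is an isomorphism for $i \ge 1$ then reduces to showing that the two surjections have the same kernel. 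I would induct on $i$, with the base case $i=1$ handled by condition (3) (the equality $\Ker(1_M \ot \vp) = S \ot_A M + M \ot_A S$, which forces the defining quadratic relations of $A$, applied through $\vp_0$, to collapse to zero in $(M^{\ot_A^s})_1$). Condition (2) then propagates such relations across tensor positions, and condition (4) supplies the coherence needed when several relations combine at higher degrees.

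For step (b), once $S$ lies only in degree $0$ the $A$-action on $S$ is trivial outside $A_0 = \kk$. Pick any $\kk$-linear section $\sigma_0 : A_1 \to (M^{\ot_A^s})_0$ of $\vp_0$ and define
\[
\theta(x) = (x - \sigma_0 \vp(x),\, \vp(x)) \quad \text{for } x \in (M^{\ot_A^s})_0, \qquad \theta(x) = (0,\, \vp(x)) \quad \text{for } x \in (M^{\ot_A^s})_i,\ i \ge 1.
\]
The element $x - \sigma_0 \vp(x)$ lies in $S$ since $\vp$ annihilates it. The map $\theta$ is $A$-bimodule linear: for $a$ of positive degree, the discrepancy $a \theta(x) - \theta(ax)$ has first coordinate in $S_{>0} = 0$ and second coordinate $a \vp(x) - \vp(ax) = 0$. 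Bijectivity follows from $\vp_i$ being an isomorphism in positive degrees together with the explicit splitting in degree $0$, and the required factorization of $\vp$ as $M^{\ot_A^s} \xrightarrow{\theta} S \oplus J(A)[1] \twoheadrightarrow J(A)[1] \hookrightarrow A[1]$ is immediate from the construction.

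The main obstacle is step (a): controlling the kernel of $\vp$ in positive degrees requires inductively lifting relations from $A_{i+1}$ to $(M^{\ot_A^s})_i$, which is precisely what conditions (2), (3), (4) of Definition \ref{str} are designed to guarantee. The tensor-position bookkeeping needed for this induction is the technical heart of the argument, whereas the splitting in step (b) is essentially formal once the concentration in degree $0$ is in hand.
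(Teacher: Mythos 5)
The paper itself does not prove this lemma; it is quoted from \cite[Corollary 5.3]{MV}, so your argument has to stand on its own. Your architecture is the right one: reduce everything to showing that $S=\Ker\vp$ is concentrated in degree $0$, then split. Your step (b) is complete and correct --- once $(\Ker\vp)_i=0$ for $i\ge 1$, the sub-bimodule $S$ automatically satisfies $J(A)\cdot S=S\cdot J(A)=0$, your map $\theta$ is a well-defined bimodule isomorphism, and the factorization of $\vp$ is immediate.

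The gap is step (a), which is the entire content of the lemma and which you only outline; moreover the outline points at the wrong condition. Condition (3) concerns the kernel of $1_M\ot_A\vp\colon M^{\ot_A^{s+1}}\rightarrow M[1]$, i.e.\ it controls when elements of the form $\vp(-)\cdot m$ vanish in $M$; it does not obviously bear on the injectivity of $\vp$ itself in positive internal degree, and you do not show how it would. The mechanism that does work, and that handles all degrees $i\ge 1$ uniformly, is condition (4) in low internal degree combined with the position-independence of $\vp$ supplied by condition (2). Concretely, prove $(\Ker\vp)_i=0$ by induction on $i\ge1$: given $z\in(M^{\ot_A^s})_i$ with $\vp(z)=0$, use that $M^{\ot_A^s}$ is generated in degree $0$ on the left and that $A_1=\vp\big((M^{\ot_A^s})_0\big)$ (condition (1)) to write $z=(\vp\ot_A 1_{M^{\ot_A^s}})(w)$ with $w\in (M^{\ot_A^{2s}})_{i-1}$; then $(\vp\ot_A\vp)(w)=\vp(z)=0$, so by condition (4) the element $w$ lies in $\sum_{l}M^{\ot_A^l}\ot_A\Ker\vp\ot_A M^{\ot_A^{s-l}}$, whose degree-$(i-1)$ component is spanned by images of tensors $u\ot k\ot v$ with $k\in(\Ker\vp)_0$ (here the induction hypothesis $(\Ker\vp)_1=\dots=(\Ker\vp)_{i-1}=0$ enters, vacuously when $i=1$); finally $(\vp\ot_A 1_{M^{\ot_A^s}})(u\ot k\ot v)=u\,\vp(k)\,v=0$ because, by condition (2), $\vp\ot_A 1_{M^{\ot_A^s}}$ may be computed in the tensor positions occupied by $k$. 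Hence $z=0$. Without an argument at this level of precision --- in particular, without showing how a lift to $A_i\ot(M^{\ot_A^s})_0$ of a relation in $A_{i+1}$ is actually forced to die in $(M^{\ot_A^s})_i$ --- your step (a) is a statement of intent rather than a proof.
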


In particular, it can be  deduced (see \cite[Corollary 5.4]{MV}) that if the $A$-bimodule $J(A)[1]$ does not contain nonzero direct summands concentrated in degree $0$, then the triple 
$(A,M,\vp)$ (and hence $\GG(A,M,\vp)$ too) can be recovered from the pair $(A,M)$ modulo isomorphism.
From here on and until the end of the paper, $(A,M,\vp)=\FF(\Lambda^!)$.

\subsection{$s$-Koszulity and $3$-Calabi-Yau property}\label{KozCY}

In this subsection we recall and discuss the definitions of an $s$-Koszul algebra, of a (twisted) $3$-Calabi-Yau algebra and of an algebra defined by a (twisted) potential.

\begin{Def} {\rm The $s$-homogeneous algebra $\Lambda$ is called {\it $s$-Koszul} if $\Ext_{\Lambda}^i(\kk,\kk)$ is concentrated in degree $-\chi_s(i)$, where
\begin{equation}
\chi_s(i)=\begin{cases}
\frac{is}{2},&\mbox{if $2\mid i$},\\
\frac{(i-1)s}{2}+1,&\mbox{if $2\nmid i$}.
\end{cases}
\end{equation}
The $2$-Koszul algebras are called simply {\it Koszul algebras}.
}
\end{Def}

If $s=2$ and $\Lambda$ is Koszul, then $\Ext_{\Lambda}^*(\kk,\kk)\cong (\Lambda^!)^{\rm op}$ as a graded algebra. For $s>2$ the situation changes.
It is proved in \cite{GMMZ} that if $\Lambda$ is $s$-Koszul, then $A$ is a Koszul algebra and $M$ is linear over $A$ and over $A^{\rm op}$.
If $\Lambda$ is $s$-Koszul and $s>2$, then $\Ext_{\Lambda}^*(\kk,\kk)\cong A\ltimes M$ as an algebra, where $A\ltimes M$ is the trivial extension of $A$ by $M$, i.e. its underlying space is $A\oplus M$ and the multiplication is given by the equality $(a,x)(b,y)=(ab,ay+xb)$ for $a,b\in A$ and $x,y\in M$. If we define the grading on $A\ltimes M$ by the equalities $(A\ltimes M)_{2n}=A_n$ and $(A\ltimes M)_{2n+1}=M_n$ for $n\ge 0$,{}  then the isomorphism above will become degree preserving.

 Let, as usually, $\Lambda$ be an $s$-homogeneous algebra and $\FF(\Lambda^!)=(A,M,\vp)$. Let us introduce the projective $\Lambda$-modules $K_{2n}=A_n^*\ot\Lambda$ and $K_{2n+1}=M_n^*\ot\Lambda$ for $n\ge 0$.
Note that the map $A_n\ot M_0\rightarrow M_n$ coming from the left $A$-module structure on $M$ induces the map $ M_n^*\rightarrow A_n^*\ot M_0^*$. Identifying of $M_0^*$ with $\Lambda_1$ and applying the multiplication $\Lambda_1\otimes\Lambda\rightarrow\Lambda$ we get the map $d_{2n}:K_{2n+1}\rightarrow K_{2n}$. Now we have also the map $\vp:M_n\ot M_0^{\ot (s-1)}\rightarrow A_{n+1}$ that induces the map $A_{n+1}^*\rightarrow M_n^*\ot (M_0^*)^{\ot (s-1)}$, and hence the map $d_{2n+1}:K_{2n+2}\rightarrow K_{2n+1}$. Then we get the complex of projective $\Lambda$-modules $\cdots\xrightarrow{d_2}K_2\xrightarrow{d_1}K_1\xrightarrow{d_0}K_0$ with the augmentation map $\mu:K_0=A_0\ot\Lambda\cong\Lambda\rightarrow\kk$ induced by the factorization by $J(\Lambda)$. The complex $(K,d)$ is called the {\it generalized Koszul complex} of $\Lambda$.
It was noted in \cite{MV} that our definition of the generalized Koszul complex is equivalent to the one given in \cite{B}.

It is proved in \cite{B} that $\Lambda$ is $s$-Koszul if and only if $K$ is exact in positive degrees. Let us recall that the Hilbert series of a graded space $U=\oplus_{i\ge 0}U_i$ is $\HH_U(t)=\sum\limits_{i\ge 0}(\dim_\kk U_i)t^i\in\bZ[[t]]$. Calculating the Euler characteristic of $K$ one gets immediately that
\begin{equation}\label{HSm}
\Big(\HH_A(t^s)-t\HH_M(t^s)\Big)\HH_{\Lambda}(t)=1.
\end{equation}
in the case where $\Lambda$ is $s$-Koszul. The opposite is not true in general even in the case $s=2$.

It is shown in \cite{B} that the exactness of $K$ implies the so-called {\it extra condition}
\begin{equation}\label{ec}
\begin{aligned}
    (\vp^*A_1^*)\ot (M_0^*)^{\ot(s-1)}\cap M_0^*\ot &\sum\limits_{i=0}^{s-2}\big((M_0^*)^{\ot i}\ot (\vp^*A_1^*)\ot (M_0^*)^{\ot (s-2-i)}\big)\\
    &=\big((\vp^*A_1^*)\ot M_0^*\cap M_0^*\otimes (\vp^*A_1^*)\big)\ot (M_0^*)^{\ot(s-2)},
\end{aligned}
\end{equation}
where both sides of the equality are subspaces of $(M_0^*)^{\ot(2s-1)}$.

In fact, this condition holds if and only if $(\Ker d_1)_i\subset \Im d_2$ for $i<2s$.
Let us introduce the $s$-homogeneous triple $(B,L,\psi)=\FF(\Lambda)$.
We denote by $O(t^n)$ the ideal of $\bZ[[t]]$ generated by $t^n$.
It follows from \cite[Corollary 7.4]{MV} that $\Lambda$ satisfies \eqref{ec} if and only if
\begin{equation}\label{HS}
\big(1-mt+(m^s-l_0)t^s-(m^{s+1}+l_1-2ml_0)t^{s+1}\big)\sum\limits_{k=0}^{s-1}(m^kt^k+l_kt^{k+s})-1\in O(t^{2s}),
\end{equation}
where $m=\dim_\kk L_0$ and $l_k=\dim_\kk (L^{\ot_Ak})_1$ for $0\le k\le s-1$.

It is proved in \cite{MV} that if $I$ is generated by one polynomial $f$, then the following result holds.

\begin{theorem}\label{MV_theor}
Suppose that $\Lambda=\kk\langle x_1,\dots,x_m\rangle/(f)$, where $f$ is some homogeneous polynomial  of degree $s$. Then $\Lambda$ is $s$-Koszul if and only if one of the following two conditions holds:
\begin{enumerate}
\item  $f=g^s$ for some linear polynomial $g$;
\item if $f=gh_1=h_2g$ for some polynomials $g$, $h_1$ and $h_2$, then $\deg g\in\{0,\deg f\}$.
\end{enumerate}
Moreover, if $s> 2$, then $\Lambda^!$ is $s$-Koszul if and only if $m=1$.
\end{theorem}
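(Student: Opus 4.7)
The plan is to analyze $\Lambda$ through its $s$-homogeneous triple $(A,M,\vp)=\FF(\Lambda^!)$. Since $\dim I_s=1$, one records immediately $\dim A_1=\dim\Lambda^!_s=1$ and $\dim M_0=m$. The entire argument hinges on a single auxiliary computation: taking orthogonal complements in $V^{*\otimes(s+1)}$ gives $\dim\Lambda^!_{s+1}=\dim(Vf\cap fV)$, and a direct coefficient comparison in $V^{\otimes(s+1)}$ then shows that $Vf\cap fV\ne 0$ if and only if $f=g^s$ for some linear $g\in V$. This dichotomy cleanly separates case~(1) of the theorem from the rest.

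\emph{Sufficiency.} If $f=g^s$ with $g$ linear, a change of basis on $V$ reduces to $f=x_1^s$; then $\Lambda$ and $\Lambda^!$ are both monomial, $\dim\Lambda^!_n=1$ for every $n\ge s$, and the triple becomes $A\cong\kk[y]$, $\dim M_n=1$ for $n\ge 1$. The generalized Koszul complex is then an explicit periodic complex whose exactness is a direct monomial verification. If instead condition~(2) holds, then $f\ne g^s$ (since $f=g^s$ would violate~(2) with $g$ itself), so $\Lambda^!_{s+1}=0$ and $\Lambda^!$ is concentrated in degrees $0,\dots,s$; hence $A=\kk[y]/(y^2)$, $M$ is concentrated in degree~$0$, and the generalized Koszul complex collapses to the length-two sequence $0\to\Lambda[-s]\to\Lambda[-1]^m\to\Lambda\to\kk\to 0$. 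Condition~(2) is the Gr\"obner-type statement that the single relation $f$ has no self-overlap, which is precisely what is needed for exactness of this short complex.

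\emph{Necessity.} If neither condition holds, then $f\ne g^s$ (so the complex has the short form above) and there exists $g$ with $0<\deg g<s$ and $f=gh_1=h_2g$. This overlap produces a nonzero cycle in $\Ker d_1$ coming from the identity $gh_1-h_2g=0$ in $T(V)$; since $d_2$ has zero domain in the truncated complex, the cycle cannot bound, so $\Lambda$ is not $s$-Koszul. The main obstacle is to exhibit this witnessing cycle uniformly over all admissible degrees $\deg g\in\{1,\dots,s-1\}$.

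\emph{The second assertion.} For $m=1$, $\Lambda^!=\kk[x]$ is manifestly $s$-Koszul via its two-term linear resolution of $\kk$. For $m>1$ and $s>2$, one applies the Hilbert series identity (\ref{HSm}) to $\Lambda^!$ with its triple $\FF(\Lambda)=(B,L,\psi)$: it would read $(H_B(t^s)-tH_L(t^s))H_{\Lambda^!}(t)=1$. The left factor has nonzero coefficients only at degrees $t^{ks}$ and $t^{ks+1}$, whereas the Taylor expansion of $1/H_{\Lambda^!}(t)$ generically has contributions in the ``gap'' degrees $t^{ks+2},\dots,t^{(k+1)s-1}$, which are non-empty precisely when $s>2$. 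A direct computation using the explicit Hilbert series of $\Lambda^!$ (either $\sum_{n=0}^{s-1}m^n t^n+\frac{t^s}{1-t}$ when $f=g^s$, or $\sum_{n=0}^{s-1}m^n t^n+t^s$ when $f\ne g^s$) shows that the $t^{s+2}$-coefficient of $1/H_{\Lambda^!}(t)$ is nonzero for every $m>1$, giving the required contradiction. In the quadratic case $s=2$ the gap degrees are empty, which is why the analogous statement fails there.
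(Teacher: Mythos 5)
This theorem is not proved in the present paper: it is quoted from \cite{MV} (and, as the authors note, can be deduced from \cite{Dic} and \cite{Gera}), so there is no in-paper argument to compare yours against; I can only judge the proposal on its own. Its architecture is sound: the identification $\dim\Lambda^!_{s+1}=\dim(Vf\cap fV)$, the fact that this space is nonzero exactly when $f=g^s$, the resulting dichotomy $A\cong\kk[y]$ versus $A\cong\kk[y]/(y^2)$ with $M$ concentrated in degree $0$, and the collapse of the generalized Koszul complex to $0\to\Lambda(-s)\to\Lambda(-1)^m\to\Lambda\to\kk\to 0$ when $f\ne g^s$ are all correct. The Hilbert-series argument for the second assertion also works: the coefficient of $t^{s+2}$ in $\HH_{\Lambda^!}(t)^{-1}$ comes out to $-(m-1)^2$ when $f=g^s$ and $-m^2$ otherwise, while the right-hand side of \eqref{HSm} is supported in degrees $\equiv 0,1\pmod{s}$, so for $s>2$ and $m>1$ the identity fails.

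The genuine gap is the sufficiency of condition (2). Writing $f=\sum_i x_i\ot f_i$, exactness at $K_2$ amounts to showing that no nonzero $\lambda\in\Lambda$ satisfies $\overline{f_i}\lambda=0$ for all $i$; your sentence that condition (2) ``is precisely what is needed for exactness'' merely restates this conclusion. The implication from the absence of a factorization $f=gh_1=h_2g$ with $0<\deg g<s$ to the injectivity of $d_1$ is the hard combinatorial core of the theorem (essentially Gerasimov's theorem, respectively Dicks' computation of the Poincar\'e series of one-relator algebras); since $f$ is an arbitrary tensor rather than a monomial, no naive Gr\"obner/overlap calculus settles it, and it cannot be asserted without proof. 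By contrast, the direction you flag as the ``main obstacle'' is the easy one: if $f=gh_1=h_2g$ with $0<\deg g<s$ and $f\ne g^s$, write $h_2=\sum_i x_i h_{2,i}$, so that $f_i=h_{2,i}g$ and $f_ih_1=h_{2,i}f\in(f)$; since $\deg h_1=s-\deg g<s$ we have $\overline{h_1}\ne 0$ in $\Lambda$, and $1\ot\overline{h_1}$ is a nonbounding cycle in $\Ker d_1$ (as $K_3=0$ here). So the necessity is fine once written out, but the sufficiency of (2) remains unproved, and the sufficiency of (1) for $m>1$ (exactness of the periodic complex for $\kk\langle x_1,\dots,x_m\rangle/(x_1^s)$) is likewise only asserted, though that verification is routine.
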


In the last theorem the case $f=g^s$ corresponds to the case $A=k[x]$.
%In this case $\gd\Lambda=\infty$.
If $f\not=g^s$ for any linear polynomial $g$, then $A=k[x]/(x^2)$ and $M=\kk^m$. In particular, the $s$-Koszulity in the last mentioned case is equivalent to the equality $\gd\Lambda=2$.
Moreover, if $\Lambda$ is not $s$-Koszul, then it does not satisfy the extra condition. The last assertion is a consequence of \cite{Gera} that is mentioned in \cite{B2}.
In fact, all of these facts can be obtained as consequences of the results of \cite{Dic}.
%One can deduce from Theorem \ref{MV_theor} also that $\gd\Lambda=\infty$ in the non $s$-Koszul case.

In this paper we will consider the case where $\Lambda=\kk\langle x_1,\dots,x_m\rangle/(f_1,f_2)$ for some linearly independent homogeneous polynomials $f_1$ and $f_2$ of degree $s$. It is shown in \cite{MV} that in this case $A=\kk\langle x,y\rangle/H$ for some quadratic ideal $H$ such that $\dim_\kk H_2\ge 2$. The consideration of the cases $H=(x^2, y^2)$ and $H=(xy, yx)$ is presented in \cite[Theorem 8.4]{MV}. Let us recall this result.

\begin{theorem}\label{A4A5} Suppose that $\Lambda=T_\kk V/(f_1, f_2)$, where $f_1$ and $f_2$ are two linearly independent elements of $V^{\ot s}$.\\
1. If $(\Lambda^!)^{(s)}\cong\kk\langle x,y\rangle/(xy,yx)$, then either
$$\Lambda\cong\kk\langle x_1,\dots,x_m,y_1,y_2\rangle/(y_1^s,y_2^s)$$
for some $m\ge 0$ or $s=2t$ and
$$\Lambda\cong\kk\langle x_1,\dots,x_m,y_1,y_2\rangle/\big((y_1y_2)^t,(y_2y_1)^t\big)$$
for some $t\ge 1$ and $m\ge 0$.\\
2. If $(\Lambda^!)^{(s)}\cong\kk\langle x,y\rangle/(x^2,y^2)$, then $s=2t+1$ and
$$\Lambda\cong\kk\langle x_1,\dots,x_m,y_1,y_2\rangle/\big((y_1y_2)^ty_1,y_2(y_1y_2)^t\big)$$
for some $t\ge 1$ and $m\ge 0$.
\end{theorem}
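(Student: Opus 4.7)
The plan is to use Corollary 5.4 of \cite{MV}, which asserts that the triple $(A,M,\vp)=\FF(\Lambda^!)$ is determined up to isomorphism by the pair $(A,M)$ as soon as $J(A)[1]$ has no nonzero direct summand concentrated in degree $0$. In both cases of the theorem this condition is routine: a generator $0\neq a=\lambda x+\mu y\in A_1$ of such a summand would have to be annihilated by $A_1$ on both sides, but the defining relations of $A$ (using that $x^2,y^2\neq 0$ in Case~1 and $xy,yx\neq 0$ in Case~2) force $\lambda=\mu=0$. Thus it suffices to classify, in each case, the graded $A$-bimodules $M$ that are linearly presented on both sides and admit a surjection $\vp\colon M^{\ot_A^s}\twoheadrightarrow J(A)[1]$ satisfying the kernel axioms (2)--(4) of Definition \ref{str}; the corresponding $\Lambda$ is then recovered from the identity $\GG(A,M,\vp)^!=T_\kk M_0^*/(\vp^*A_1^*)$.

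I would decompose $M$ as $(A\ot A)^{\oplus m}\oplus M'$, where the free summand contributes $m$ generators $x_1,\dots,x_m$ of $\Lambda$ that appear in no relation. The essential part $M'$ has $\dim M'_0=2$, since $\vp_0$ must surject onto $A_1$. Using the classification of indecomposable graded bimodules over $A$ recalled in the preliminary section together with linearity of $M$ on both sides, $M'$ must be built from two uniserial pieces that match the two ``lines'' into which $J(A)[1]$ splits as a left and as a right $A$-module. The classification then branches according to how the left and right module structures on $M'$ are glued. In Case~1, the diagonal gluing --- in which $X$ and $Y$ act in the same way from the left and from the right on the two generators of $M'_0$ --- produces $\Lambda\cong\kk\langle x_1,\dots,x_m,y_1,y_2\rangle/(y_1^s,y_2^s)$; the ``swapped'' gluing, in which $X$ acts from the left along the first line but from the right along the second, is consistent with axiom~(4) of Definition \ref{str} precisely when $s=2t$ is even, and produces the relations $(y_1y_2)^t,(y_2y_1)^t$. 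An analogous analysis for Case~2, using the alternating bipartite monomial basis of $A=\kk\langle x,y\rangle/(x^2,y^2)$, forces $s=2t+1$ odd and yields the single family of the second part of the statement. For each admissible $M$, computing $\vp^*A_1^*$ explicitly matches the listed relations.

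The main obstacle is controlling these twisted gluings: the conditions $\Ker(1_M\ot_A\vp)=\Ker\vp\ot_AM+M\ot_A\Ker\vp$ and $\Ker(\vp\ot_A\vp)=\sum_i M^{\ot_A^i}\ot_A\Ker\vp\ot_A M^{\ot_A^{s-i}}$ impose rigid compatibility on how $M$ interacts with itself across several tensor powers. Showing that exactly the parities $s=2t$ (for the off-diagonal subcase of Case~1) and $s=2t+1$ (for Case~2) allow the swapping to close consistently under $s$ or $2s$ tensor products, and that no other bimodule structure on $M'$ satisfies the triple axioms, is the combinatorial heart of the argument.
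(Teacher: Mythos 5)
This theorem is not proved in the present paper at all: it is quoted verbatim from \cite[Theorem 8.4]{MV} (``The consideration of the cases $H=(x^2,y^2)$ and $H=(xy,yx)$ is presented in \cite[Theorem 8.4]{MV}''), so the only in-paper comparison available is with the analogous arguments for $A^6$, $A^7(0)$ and $A^7(q)$ in Theorems \ref{A6A70} and \ref{A7q}. Your overall strategy --- reduce to the pair $(A,M)$ via \cite[Corollary 5.4]{MV}, classify the admissible bimodules $M$, and read off $\Lambda=T_\kk M_0^*/(\vp^*A_1^*)$ --- is exactly the machinery those proofs use, and your verification that $J(A)[1]$ has no summand concentrated in degree $0$ for $A^4$ and $A^5$ is correct.

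There are, however, two genuine problems. First, the decomposition $M\cong(A\ot A)^{\oplus m}\oplus M'$ with a \emph{free} bimodule summand is wrong. The summand responsible for the free generators $x_1,\dots,x_m$ of $\Lambda$ is the trivial bimodule $\kk^m$ concentrated in degree $0$, not $(A\ot A)^{\oplus m}$. A free summand is impossible here: $\dim_\kk(A\ot A)_1=4$, whereas Lemma \ref{main_strip} forces $\dim_\kk\big(M^{\ot_A s}\big)_1=\dim_\kk J(A)_2=2$ for $A\in\{A^4,A^5\}$; equivalently, $W_1\cong A_A$ can be a right-module direct summand of $M$ only when $A\cong A^7(0)$ (Lemma \ref{W1}). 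One also still has to justify that the $\kk^m$ summand contributes no relations, which requires the vanishing statement of Lemma \ref{ker_vp}. Second, the assertion that $\dim_\kk M'_0=2$ ``since $\vp_0$ must surject onto $A_1$'' does not follow: surjectivity only gives $(\dim_\kk M_0)^s\ge 2$, a lower bound on $\dim_\kk M_0$ and no upper bound on the essential part. The actual upper bounds ($\dim_\kk M_1\le 2$, the list of admissible indecomposable summands, and the matching of left and right structures) are the content of Lemma \ref{main_rest} through Corollary \ref{second_coro} and Lemmas \ref{B2_ts}--\ref{B1_ts}, none of which is reproduced or replaced in your sketch. Finally, the step you yourself identify as ``the combinatorial heart'' --- proving that only the diagonal and swapped gluings of the two uniserial pieces occur, and that the swapped gluing closes up under $\vp$ exactly when $s$ has the stated parity --- is announced but not carried out, and it is precisely where the theorem's content lies. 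As it stands the proposal is a plausible plan with one incorrect structural claim and the decisive computations missing.
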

%One can show that $\gd\Lambda=\infty$ for all the algebras $\Lambda$ mentioned in this theorem.
All the algebras $\Lambda$ mentioned in this theorem  are $s$-Koszul. Their $s$-homogeneous duals are not $s$-Koszul and it is a particular case of Theorem \ref{nkoz} presented below.

Let us recall that, for a $\Lambda$-bimodule $M$ and an automorphism $\theta$ of $\Lambda$, ${}_{\theta}M$ denotes the $\Lambda$-bimodule that coincides with $M$ as a right $\Lambda$-module and has the  left $\Lambda$-module structure twisted by $\theta$, i.e. defined by the equality $a\cdot x=\theta(a)x$ for $a\in\Lambda$ and $x\in M$.

\begin{Def}{\rm Let $\nu$ be an automorphism of an algebra $\Lambda$. Then the algebra $\Lambda$ is called {\it $\nu$-twisted $d$-Calabi-Yau} if the projective dimension of $\Lambda$ as a $\Lambda$-bimodule equals $d$, $\Ext_{\Lambda\otimes\Lambda^{\rm op}}^i(\Lambda,\Lambda\otimes\Lambda^{\rm op})=0$ for $i\not=d$, and $\Ext_{\Lambda\otimes\Lambda^{\rm op}}^d(\Lambda,\Lambda\otimes\Lambda^{\rm op})\cong {}_{\nu}\Lambda$ as a $\Lambda$-bimodule.
We will write CY instead of Calabi-Yau for short. We will call $1_{\Lambda}$-twisted $d$-CY algebras simply $d$-CY algebras. The automorphism $\nu$ is defined modulo the group of inner automorphisms by $\Lambda$. It  is called the {\it Nakayama automorphism} of $\Lambda$
}
\end{Def}

This definition makes sense for any algebra $\Lambda$ and any automorphism $\nu$, but in this paper we will use it only for graded (and even $s$-homogeneous) algebras and graded automorphisms.
Moreover, we will consider only the $3$-CY property. In this case algebras defined by twisted potentials play a significant role. Let us recall the definition of these algebras and their connections with CY algebras.
In fact, we will define next a homogeneous twisted potential, but we omit the term homogeneous because we are not going to consider non homogeneous potentials in this paper.

\begin{Def}{\rm 
Let $V$ be a finite dimensional vector space and $\sigma$ be some automorphism of $V$. An element $w\in V^{\ot N}$ is called {\it $\sigma$-twisted potential of degree $N$} if $\phi_\sigma(w)=w$, where $\phi_\sigma:V^{\ot N}\rightarrow V^{\ot N}$
acts on the tensor $v_1\ot\dots\ot v_N\in V^{\ot N}$ by the rule $\phi_\sigma(v_1\ot\dots\ot v_N)=\sigma(v_N)\ot v_1\ot\dots\ot v_{N-1}$. As before, we call $1_V$-twisted potentials simply potentials. The {\it algebra defined by the $\sigma$-twisted potential $w$} is by definition the algebra $\bD(V,w)=T_\kk V/I$, where $I$ is the ideal generated by the subspace $R=\{\big(f\ot 1^{\ot(N-1)}\big)(w)\mid f\in V^*\}\subset V^{\ot(N-1)}$. We will write simply $\bD(w)$ instead of $\bD(V,w)$, because $V$ can be always recovered from the context. It is clear from the definition that $\bD(w)$ is an $(N-1)$-homogeneous algebra. Note also that $\sigma:V\rightarrow V$ induces an automorphism of the algebra $\bD(w)$.
}
\end{Def}

\begin{rema}
Sometimes (for example, in \cite{Ginz}) a homogeneous potential is defined as a homogeneous element of the space $T_\kk V/[T_\kk V,T_\kk V]$. This definition gives the same definition of an algebra defined by a potential in the case of zero characteristic. In the case of an arbitrary characteristic considered here, our definition is more appropriate.
\end{rema}

%Check that below the Nakayama automorphism is really $\nu$ and is not $\nu^{-1}$
It is shown in \cite{BSW} that any graded $\nu$-twisted $3$-CY algebra is $s$-Koszul for some $s$ and is isomorphic to an algebra defined by some $\nu$-twisted potential of degree $s+1$.
Here $\nu$ denotes the automorphism $\nu:\Lambda\rightarrow\Lambda$ and  at the same time the induced automorphism of $\Lambda_1$.

Let us recall one more definition.

\begin{Def}{\rm Let $\Lambda=\kk\langle x_1,\dots,x_m\rangle/I$ and $\Gamma=\kk\langle y_1,\dots,y_k\rangle/H$ be two $s$-homogeneous connected algebras. Their {\it free product} is by definition the algebra
$$\Lambda*\Gamma=\kk\langle x_1,\dots,x_m,y_1,\dots,y_k\rangle/(I+H),$$ where $I$ and $H$ are included in $\kk\langle x_1,\dots,x_m,y_1,\dots,y_k\rangle$ in the obvious way.
}
\end{Def}

\begin{Ex} Let us consider the potentials $w_1=y_1^{s+1}+y_2^{s+1}$ and $w_2=(y_1y_2)^{t+1}+(y_2y_1)^{t+1}$ on $\kk\langle y_1,y_2\rangle_{s+1}$ (where $s=2t+1$ in the case of the second potential).
It is easy to see that
\begin{multline*}
\kk\langle x_1,\dots,x_m\rangle*\bD(w_1)=\kk\langle x_1,\dots,x_m,y_1,y_2\rangle/(y_1^s,y_2^s),\\
\kk\langle x_1,\dots,x_m\rangle*\bD(w_2)=\kk\langle x_1,\dots,x_m,y_1,y_2\rangle/\big((y_1y_2)^ty_1,y_2(y_1y_2)^t\big)
\end{multline*}
are the algebras from Theorem \ref{A4A5}. Both of them are $s$-Koszul, but obviously not $3$-CY.
At the same time, $w_3=(y_1y_2)^{t}y_1+(y_2y_1)^{t}y_2\in \kk\langle y_1,y_2\rangle_{s+1}$ (where $s=2t$) is a $\nu$-twisted potential for $\nu$ interchanges $y_1$ and $y_2$. Then $\kk\langle x_1,\dots,x_m\rangle*\bD(w_3)=\kk\langle x_1,\dots,x_m,y_1,y_2\rangle/\big((y_1y_2)^t,(y_2y_1)^t\big)$ is the last algebra from Theorem \ref{A4A5} which is $s$-Koszul, but not $\nu$-twisted $3$-CY. Further we will meet (twisted) $3$-CY algebras too (see Subsection \ref{CY}).
\end{Ex}

\subsection{Classification of quadratic algebras with two generators}

We have not found an explicit classification of quadratic algebras with two generators modulo isomorphism. By this reason we present here this classification with a proof for completeness and  convenience.

\begin{lemma}\label{class} Let $A=\kk\langle x,y\rangle/H$, where $H$ is generated by $H_2$.\\
$1.$ If $\dim_\kk H_2=0$, then $A=\kk\langle x,y\rangle$ is the free algebra on two generators.

\noindent $2.$ If $\dim_\kk H_2=1$, then $A$ is isomorphic to one of the algebras
\begin{itemize}
\item  $\bar A^1=\kk\langle x,y\rangle/(x^2)$;
\item $ \bar A^2(q)=\kk\langle x,y\rangle/(xy+qyx)\,\,(q\in\kk)$;
\item $\bar A^3=\kk\langle x,y\rangle/(x^2+xy-yx)$.
\end{itemize}
We have the isomorphism $\bar A^2(q)\cong \bar A^2(q^{-1})$ for $q\not=0,1,-1$, all the other  algebras from the list are pairwise non-isomorphic.

\noindent $3.$ If $\dim_\kk H_2=2$, then $A$ is isomorphic to one of the algebras
\begin {itemize}
\item $ A^4=\kk\langle x,y\rangle/(xy,yx)$;
\item $ A^5=\kk\langle x,y\rangle/(x^2,y^2)$;
\item $A^6=\kk\langle x,y\rangle/(x^2,yx)$;
\item $A^7(q)=\kk\langle x,y\rangle/(x^2,xy-qyx)\,(q\in\kk)$;
\item $A^8=\kk\langle x,y\rangle/(xy,x^2-y^2)$;
\item $A^9(q)=\kk\langle x,y\rangle/(xy,x^2-yx-qy^2)\,(q\in\kk).$
\end{itemize}
All of these algebras are pairwise non-isomorphic.

\noindent $4.$ If $\dim_\kk H_2=3$, then $A$ is isomorphic to one of the algebras
\begin{itemize}
\item $A^1=\kk\langle x,y\rangle/(x^2,xy,yx)$;
\item $A^2(q)=\kk\langle x,y\rangle/(x^2,y^2,xy-qyx)\,\,(q\in\kk)$;
\item $A^3=\kk\langle x,y\rangle/(x^2,xy+yx,xy+y^2).$
\end{itemize}
Estanislao Herscovich We have the isomorphism $A^2(q)\cong A^2(q^{-1})$ for $q\not=0,1,-1$, all the other  algebras from the list are pairwise non isomorphic.

\noindent $5.$ If $\dim_\kk H_2=4$, then $A=\kk\langle x,y\rangle/(x,y)^2=A^0$.
\end{lemma}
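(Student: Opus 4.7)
The plan is to classify algebras $A=\kk\langle x,y\rangle/H$ (with $H$ generated by $H_2\subset V\otimes V$, where $V=\kk x\oplus\kk y$) by classifying orbits of the natural $GL(V)$-action on the Grassmannian $\mathrm{Gr}(d,V\otimes V)$, since two such algebras are graded-isomorphic if and only if their spaces of relations lie in the same orbit; this reduction works because a graded isomorphism is determined by its degree-one restriction, which must be an element of $GL(V)$ carrying one space of relations onto the other. The cases $d\in\{0,4\}$ are immediate, and the case $d=3$ is related to the case $d=1$ by quadratic duality $A\mapsto A^!$, which is an involution on isomorphism classes sending $\dim H_2=d$ to $\dim H_2^\perp=4-d$; thus it suffices to treat $d=1$ and $d=2$ directly.

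For $d=1$, I would decompose $V\otimes V=S^2V\oplus\Lambda^2V$ and write the generator as $f=s+a$; the $GL(V)$-action respects the decomposition and on $\Lambda^2V$ is by the determinant character. If $a=0$, the classification of symmetric bilinear forms on $V$ by rank (over the algebraically closed field $\kk$) yields $\bar A^1$ and $\bar A^2(1)$. If $a\neq 0$, I would normalize $a$ to a fixed nonzero multiple of $xy-yx$, reducing the remaining symmetry to $SL(V)$ together with a global rescaling of $f$; the residual action on $s\in S^2V$ then produces three orbit types: $s=0$ gives $\bar A^2(-1)$; $\mathrm{rk}\,s=1$ gives the exceptional $\bar A^3$ (the scale of $s$ is absorbed by a diagonal torus in $SL(V)$); and $\mathrm{rk}\,s=2$ yields the one-parameter family $\bar A^2(q)$, with the parameter surviving as the only invariant left after exhausting $SL(V)$ and scaling.

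The main obstacle is the case $d=2$, which is self-dual under quadratic duality and involves the four-dimensional Grassmannian $\mathrm{Gr}(2,V\otimes V)$. I would stratify by $\dim(H_2\cap\Lambda^2V)\in\{0,1\}$ and, within each stratum, by the structure of the pencil of $2\times 2$ matrices corresponding to $H_2$: the number of rank-one elements up to scalar, the existence of a common kernel or a common cokernel, and the Kronecker-type normal form of the pencil under the simultaneous congruence $F\mapsto MFM^T$. The six cases $A^4,\dots,A^9$ correspond to the six discrete orbit types, while the continuous parameter in $A^7(q)$ and $A^9(q)$ appears as the cross-ratio of the four points cut on $\mathbb{P}(V)$ by the rank-one locus of the pencil, which is the unique $GL(V)$-invariant that cannot be eliminated. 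Finally, to prove the non-isomorphism statements I would compare computable invariants: the Hilbert series $\HH_A(t)$, the dimensions $\dim_\kk\Ann_A(a)$ for $a\in A_1$, the socle of $A/J(A)^3$, and, for the one-parameter families, the cross-ratio above, which distinguishes orbits exactly up to the identification $\{q,q^{-1}\}$ stated in the lemma.
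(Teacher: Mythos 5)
Your reduction to classifying $GL(V)$-orbits on $\mathrm{Gr}(d,V\ot V)$ and the use of quadratic duality to trade $d=3$ for $d=1$ match the paper exactly (the paper notes $(A^i)^!\cong\bar A^i$), and your geometric language (pencils of bilinear forms, the rank-one Segre locus) is a legitimate reformulation of what the paper does by hand. The paper's route is more elementary: for $d=1$ it kills the $y^2$-coefficient by the substitution $x\mapsto x+ty$ (using only that $\kk$ is algebraically closed), and for $d=2$ its key step is that the determinant of the pencil is a quadratic form in $(t_1,t_2)$, hence vanishes somewhere, so the pencil always contains a decomposable element and one may normalize $r_1\in\{x^2,xy\}$ before a finite case analysis. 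That elementary lemma is exactly what would make your "stratify by the structure of the pencil" step rigorous, and you should state it explicitly; as written, the assertion that a Kronecker-type normal form under simultaneous congruence $F\mapsto MFM^T$ yields precisely six types is the hard part of the lemma and is not carried out.

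Two steps of your plan would actually fail. First, the decomposition $V\ot V=S^2V\oplus\Lambda^2V$ is not a direct sum in characteristic $2$, whereas the paper works over an arbitrary algebraically closed field (and elsewhere explicitly insists on arbitrary characteristic); the paper's substitution argument is characteristic-free, so your $d=1$ argument needs a separate treatment of $\charr\kk=2$. Second, and more seriously, your proposed invariant for the continuous families is wrong for $A^7(q)$: the pencil $\langle x^2,\,xy-qyx\rangle$ has determinant $q\,t_2^2$, so for $q\not=0$ it is \emph{tangent} to the Segre quadric and contains only the single rank-one element $x^2$ (a double point). There are not four points on $\mathbb{P}(V)$ and no cross-ratio, and indeed none of your listed invariants (Hilbert series, $\dim_\kk\Ann_A(a)$ for $a\in A_1$, the socle of $A/J(A)^3$) separates $A^7(q)$ from $A^7(q')$. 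The paper instead argues directly: any graded isomorphism must preserve the unique square-zero line $\langle x\rangle$, hence sends $x\mapsto\lambda x$, $y\mapsto kx+ly$, and carrying $xy-qyx$ through such a map forces $q=q'$. The cross-ratio idea does work for $A^9(q)$ (there the pencil is transverse and cuts out $[x],[y],[x-y],[x+qy]$), so the fix is to supply a different invariant, or the paper's direct computation, in the tangent case.
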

\begin{proof} Items 1 and 5 are clear. Note that item 4 follows from item  2 because $(A^1)^!\cong \bar A^1$, $A^2(q)^!\cong \bar A^2(q)$, and $(A^3)^!\cong \bar A^3$.

Let us prove item 2. In this case $A=\kk\langle x,y\rangle/(ax^2+bxy+cyx+dy^2)$ for some nonzero vector $(a,b,c,d)\in \kk^4$.
Note that, for any $t\in\kk$, there is an isomorphism 
$$A\cong\kk\langle x,y\rangle/\big(ax^2+(b+at)xy+(c+at)yx+(at^2+(b+c)t+d)y^2\big)$$
sending $x$ to $x+ty$ and $y$ to $y$. Since $\kk$ is algebraically closed, we may assume that $A=\kk\langle x,y\rangle/(ax^2+bxy+cyx)$.
If $b+c\not=0$, then $A\cong\kk\langle x,y\rangle/(bxy+cyx)$ via the isomorphism sending $x$ to $x$ and $y$ to $y-\frac{a}{b+c}x$.
Now it is easy to turn $A$ into one of the required forms by rescaling and interchanging (in the case $b=0$, $c\not=0$) $x$ and $y$.

Direct calculations show that among the algebras $\bar A^1$, $\bar A^2(q)$ and $\bar A^3$ only the algebra $\bar A^1$ has square zero element of degree $1$, only the algebra $\bar A^2(0)$ has two linearly independent elements of degree $1$ whose product is zero, and, for $q\in\kk^*$, only the algebras $\bar A^2(q)$ and $\bar A^2(q^{-1})$ have such two linearly independent elements $u$ and $v$ of degree $1$ that $uv+qvu=0$.

It remains to prove item 3. We have
$A=\kk\langle x,y\rangle/(r_1,r_2)$, where 
$$r_1=a_1x^2+b_1xy+c_1yx+d_1y^2\mbox{ and }r_2=a_2x^2+b_2xy+c_2yx+d_2y^2$$ for some linearly independent vectors $(a_1,b_1,c_1,d_1),(a_2,b_2,c_2,d_2)\in \kk^4$. One can show that $t_1r_1+t_2r_2$ can be decomposed into a product of two linear polynomials for $t_1,t_2\in\kk$ if and only if the vectors
$$(a_1t_1+a_2t_2)x+(b_1t_1+b_2t_2)y\mbox{ and }(c_1t_1+c_2t_2)x+(d_1t_1+d_2t_2)y$$
are linearly dependent. Since this condition is equivalent to the vanishing of some homogeneous quadratic polynomial in $t_1$ and $t_2$,  we may assume that either $r_1=x^2$ or $r_1=xy$.

Suppose that $r_1=x^2$. Then we may assume that $r_2=bxy+cyx+dy^2$ for some $b,c,d\in\kk$. If $d=0$, then $A$ can be put in the form of $A^6$ or $A^7(q)$ by rescaling. In the opposite case we may assume that $d=1$.
If we also have $b=c$, then $A\cong A^5$ via the isomorphism sending $x$ to $x$ and $y$ to $y+bx$. If $b\not=c$, then $(y+bx)(y+cx)$ vanishes in $A$ and we reduce  to the case $r_1=xy$.

Suppose now that $r_1=xy$. We may assume that $r_2=ax^2+cyx+dy^2$ for some $a,c,d\in\kk$. If $a=d=0$, then $A=A^4$.
If $a=c=0$, then $A\cong A^6$ via interchanging $x$ and $y$. If $a=0$ and $c,d\not=0$, then changing $x$ by $y$ and $y$ by $cx+dy$ we may assume that $a\not=0$.
Now it is easy to turn $A$ into one of the required forms by rescaling $x$ and $y$.

Let us prove that all the algebras from item  3 are pairwise nonisomorphic. First of all, $A^4$ is the unique algebra from the list that is commutative and does not have square zero elements of degree $1$.  Only the algebra $A^5$ has two linearly independent square zero elements of degree $1$. Direct calculations show that the algebras $A^8$ and $A^9(q)$ do not have square zero elements while the algebras $A^6$ and $A^7(q)$ have unique modulo scalar square zero element of degree $1$. Thus, any graded isomorphism from $A^7(q)$ to $B\in\{A^7(q')\}_{q'\in\kk}\cup\{A^6\}$ has to send $x$ to $x$ and $y$ to $kx+ly$ for some $l\not=0$. One can check that this map is an isomorphism if and only if $B=A^7(q)$.
Finally, suppose that there is an isomorphism from $B\in\{A^9(q')\}_{q'\in\kk}\cup\{A^8\}$ to $A^9(q)$. Using the fact that $xy=0$ in $B$, one can show that this isomorphism has to send $x$ and $y$ either to $k_1x$ and $k_2y$ or to $k_1(x-y)$ and $k_2(x+qy)$ correspondingly for some $k_1,k_2\in\kk^*$. One can show that $k_1=k_2$ and $B=A^9(q)$ in the first case and that $q\not=-1$, $k_1+qk_2=0$, and $B=A^9(q)$ in the second case.
\end{proof}

It is not difficult to show, using the results of \cite{B0}, that the algebras $A^i$ ($1\le i\le 7$) are Koszul, we also may prove this by noting that the defining ideal for these algebras have quadratic Gr\"obner basis, which implies that they are Koszul. Algebras whose generating ideal have quadratic Gr\"obner basis are called strongly Koszul, see \cite{GW}. We also note  that the algebras $A^8$ and $A^9$ are not Koszul.

\subsection{Indecomposable modules}\label{ind_sec}

Let $A$ be a quadratic algebra with two generators, i.e. $A=\kk\langle x,y\rangle/H$, where $H$ is a quadratic ideal.
In this case an $A$-module $M$ with a linear presentation is determined by a pair $(U,T)$, where $U$ is some vector space and $T$ is a subspace of $U\otimes A_1$.
Namely, the pair $(U,T)$ corresponds to the cokernel of the map $T\ot A\rightarrow U\ot A$ induced by the inclusion of $T$ into $U\otimes A_1$.
 For instance, if $M= \kk = A/(x,y)$, then 
$U=\kk $ and $T= \kk\ot (\kk x \oplus \kk y).$ 
Two modules defined by $(U_1, T_1)$ and $(U_2, T_2)$ are isomorphic if and only if there is an isomorphism $f:U_1\rightarrow U_2$ such that $(f\otimes 1)(T_1)=T_2$.

Note that we do not use the relations of $A$ in this discussion. This means that  the category of linear presented modules does not depend on the defining ideal of the algebra.  Therefore the  modules having linear presentation over $A$ correspond bijectively in a natural way to the modules having linear presentation over  $A^0$.

Note that $A^0$ is a string special biserial algebra and the classification of indecomposable modules over it can be found, for example, in \cite{BR} (see also \cite{MV}). 

 Using the classification of indecomposable modules over the string special biserial algebra $ A^0$, we see that   any indecomposable $A$-module is isomorphic either to a module of the form
$$A^n/\langle af_1y,\{f_ix-f_{i+1}y\}_{1\le i\le n-1}, bf_nx\rangle_A\,\,\,(n\ge 1, a,b\in\{0,1\})$$
or to a module of the form
$$A^n/\langle\{(\alpha f_i+f_{i+1})x-f_{i}y\}_{1\le i\le n-1},\alpha f_nx-f_ny\rangle_A\,\,\,(n\ge 1, \alpha\in\kk^*).$$

 For $(\alpha,\beta)\in\kk^2\setminus\{(0,0)\}$ we choose some $(\tilde\alpha,\tilde\beta)\in\kk^2$ such that $\alpha\tilde\beta\not=\beta\tilde\alpha$ and introduce
$$
B_n(\alpha,\beta)=A^n/\langle \{f_{i+1}(\tilde\alpha x-\tilde\beta y)-f_{i}(\alpha x-\beta y)\}_{1\le i\le n-1},f_n(\alpha x-\beta y)\rangle_A\,\,\,(n\ge 1).
$$
It is not difficult to show that, for any nonzero $(\alpha,\beta)$, the module $B_n(\alpha,\beta)$ does not depend on the choice of $(\tilde\alpha,\tilde\beta)$ modulo isomorphism and is isomorphic to one of the modules listed above. Moreover, $B_n(\alpha,\beta)\cong B_m(\gamma,\delta)$ if and only if $n=m$ and $(\alpha,\beta)=(c\gamma,c\delta)$ for some $c\in\kk^*$. Thus, the classification of indecomposable $A$-modules can be rewritten in the following way. Any indecomposable $A$-module is isomorphic either to $B_n(\alpha,\beta)$ for some $(\alpha,\beta)\in\mathbb{P}^1$ or to one of the modules
$$
Z_n=A^n/\langle f_1y,\{f_ix-f_{i+1}y\}_{1\le i\le n-1}, f_nx\rangle_A,\,\,W_n=A^n/\langle\{f_ix-f_{i+1}y\}_{1\le i\le n-1}\rangle_A\,\,\,(n\ge 1).
$$
There is analogous classification for left modules. Namely, we introduce
\begin{multline*}
B'_n(\alpha,\beta)=A^n/\langle \{(\tilde\alpha x-\tilde\beta y)f_{i+1}-(\alpha x-\beta y)f_{i}\}_{1\le i\le n-1},(\alpha x-\beta y)f_n\rangle_{A^{\rm op}},\\
Z'_n=A^n/\langle yf_1,\{xf_i-yf_{i+1}\}_{1\le i\le n-1}, xf_n\rangle_{A^{\rm op}},\,\,W'_n=A^n/\langle\{xf_i-yf_{i+1}\}_{1\le i\le n-1}\rangle_{A^{\rm op}},
\end{multline*}
and then any indecomposable $A^{\rm op}$-module is isomorphic either to $B'_n(\alpha,\beta)$ for some $(\alpha,\beta)\in\mathbb{P}^1$ or to one of the modules
$Z'_n$, $W'_n$ ($n\ge 1$).  We remark that, given an $A$-module  $M$ isomorphic to  $B(\alpha, \beta)$ and
$(\alpha', \beta' )\in\mathbb{P}^1$, there is an isomorphism of algebras $A'\to A$, such that $M_{A'}$ is isomorphic to $B(\alpha', \beta').$

Now we give left and right module decompositions of $J(A)$, where $A$ is one of the algebras $A^i$ ($0\le i\le 9$) from Lemma \ref{class}. We collect this information in the next table.

\begin{center}
Table. {\it Decompositions of $J(A)$ for algebras with two generators.}\\
\begin{tabular}{|l|l|l|}
\hline
$A$&$J(A)[1]_A$&${}_AJ(A)[1]$\\
\hline
$A^0$&$Z_1\oplus Z_1$ &$Z_1'\oplus Z_1'$ \\
\hline
\hline
$A^1$&$B_1(1,0)\oplus Z_1$ &$B_1'(1,0)\oplus Z_1'$ \\
\hline
$A^2(0)$&$B_1(0,1)\oplus Z_1$ &$B_1'(1,0)\oplus Z_1'$ \\
\hline
$A^2(q)$ ($q\not=0$), $A^3$& $Z_2$ & $Z_2'$ \\
\hline
$A^4$, $A^5$&$B_1(1,0)\oplus B_1(0,1)$ & $B_1'(1,0)\oplus B_1'(0,1)$\\
\hline
$A^6$& $B_1(1,0)\oplus B_1(1,0)$&$W_1'\oplus Z_1'$ \\
\hline
$A^7(0)$&$W_1\oplus Z_1$ &$B_1'(1,0)\oplus B_1'(1,0)$ \\
\hline
$A^7(q)$ ($q\not=0$)&$B_2(1,0)$ &$B_2'(1,0)$ \\
\hline
$A^8$&$B_2(0,1)$ &$B_2'(1,0)$ \\
\hline
$A^9(q)$&$B_1(1,-q)\oplus B_1(0,1)$ & $B_1'(1,0)\oplus B_1'(1,1)$\\
\hline
\end{tabular}
\end{center}

\subsection{General ideas}
Our general goal is to classify the $s$-homogeneous algebras whose defining ideal have two generators, and to distinguish which ones are $s$-Koszul.

Since we have proved that the category of $s$-homogenous algebras is equivalent to the category of $s$-homogenous triples by a specific equivalence, we  will try to classify the corresponding $s$-homogeneous triples.

An $s$-homogenous triple is formed by a quadratic algebra, a graded bimodule and a morphism.

We observe that to  to classify $s$-homogeneous algebras with two relations it is enough to classify $s$-homogeneous triples $(A,M,\vp)$ whose first component is a quadratic algebra with two generators.
Thus, the first step of our classification has been done above.
The second step of our approach is the classification of  $A$-bimodules $M$ such that the pair $(A,M)$ can be completed to an $s$-homogeneous triple.
Lemma \ref{main_strip} giving restrictions on such a bimodule is the main ingredient of our second step. 

The third step is the classification  graded $A$-bimodule homomorphisms $\vp:M^{\ot_As}\rightarrow A[1]$ such that  $(A,M,\vp)$ is an $s$-homogeneous triple. 

The nice side of this  is that, for many algebras, if such a homomorphism exists, then, due to the remark after Lemma \ref{main_strip}, it is uniquely determined by the pair $(A,M)$, modulo isomorphism of $s$-homogeneous triples. In particular, $\vp$ is uniquely determined by $A$ and $M$ for all the algebras from Lemma \ref{class} except $A^1$ and $A^0$. 

The not so nice side  is that if $\vp$ exists and is not uniquely determined, then usually there is no hope to classify all the maps $\vp$. For example,
it is not done even for the algebra $A=k[x]/(x^2)$! Thus, for $A=A^1$ and $A=A^0$ we will not get a full classification but only some facts about corresponding $s$-homogeneous algebras.

Note that due to \cite[Theorem 2]{MSV} if $M$ and $N$ are graded $A$-bimodules linearly presented as left and right $A$-modules, then $M\ot_A N$ has the same property.
Thus, the category $\Lin(A)$ of graded $A$-bimodules linearly presented as left and right $A$-modules is a monoidal category. Note that $\kk\in\Lin(A)$ and if $A$ is a quadratic algebra, then $J(A)[1]\in\Lin(A)$ too. Note that $\kk\ot _AM\cong M\ot _A\kk\cong \kk^{\dim_\kk M_0}$ for any $M\in\Lin(A)$. Thus, morphisms that can be factored though $\kk^m$ for some $m\ge 0$ constitute a tensor ideal $\TT(A)$ in the category $\Lin(A)$ and we can consider the tensor category $\Lin_+(A)=\Lin(A)/\TT(A)$.

Due to Lemma \ref{main_strip}, if $(A,M,\vp)$ is an $s$-homogeneous triple, then $M\in\Lin(A)$ satisfies the condition $M^{\ot_As}\cong J(A)[1]$ in $\Lin_+(A)$.
Note that $M$ satisfies this condition if and only if $M\oplus\kk^m$ satisfies it for any $m\ge 0$. Moreover, it is clear that if $(A,M,\vp)$ is an $s$-homogeneous triple, then $(A,M\oplus \kk^m,\vp\pi^{\ot_As})$ is an $s$-homogeneous triple, where $\pi:M\oplus \kk^m\rightarrow M$ is the projection on the first summand.

\section{First two components}

Until the end of the paper we suppose that $A$ is a quadratic algebra with two generators, i.e. $A=\kk\langle x,y\rangle/H$, where $H$ is a quadratic ideal.
This section is devoted to the description of bimodules $M$ that can appear as the second component of an $s$-homogeneous triple with the first component $A$.
In particular, this will allow us to describe the $s$-homogeneous algebras corresponding to good algebras $A$, i.e. such that $J(A)[1]$ does not contain direct $A$-bimodule summands concentrated in degree zero.
Also, using the obtained description, we will show that a connected $s$-homogeneous algebra with two dimensional $s$-th component cannot be $s$-Koszul if $s>2$.

For an $A$-bimodule $N$, a subspace $X\subset N$, and a an element $a\in A$, we introduce
$$r\Ann_X(a)=\{w\in X\mid aw=0\}\mbox{ and }l\Ann_X(a)=\{w\in X\mid wa=0\}.$$
For $u\in A_1$ and $1\le t\le s$, let us introduce $r_u(t)=\dim_\kk\left(M_0^{\ot t}u\right)$ and $l_u(t)=\dim_\kk\left(uM_0^{\ot t}\right)$, where $M_0^{\ot t}$ is considered as a subspace of $M^{\ot_A t}$.
%$r_u(t)=\dim_\kk M_0^{\ot t}-\dim_\kk l\Ann_{M_0^{\ot t}}(u)=\dim_\kk(M_0^{\ot t}u)$.

\subsection{First restrictions on $M$}

Here we prove the first lemma that restricts the number of left and right $A$-module structures that the bimodule $M$ from an $s$-homogeneous triple $(A,M,\vp)$ can have.

\begin{lemma}\label{main_rest} Suppose that $1\le t\le s$ and $(A,M,\vp)$ is an $s$-homogeneous triple, where $A=\kk\langle x,y\rangle/H$ for some quadratic ideal $H$.
\begin{enumerate}
\item There are linearly independent elements $u,v\in A_1$ such that $r_u(t),r_v(t)\le 2$.
\item If there is some $Y\in M_0^{\ot t}$ such that $\dim_\kk A_1Y=1$, then there exist  linearly independent $u,v\in A_1$ such that $r_u(t)\le 1$ and $r_v(t)\le 2$.
\item If the set of elements $Y\in M_0^{\ot t}$ such that $\dim_\kk A_1Y\le 1$ generate $M_0^{\ot t}$, then there are  linearly independent elements $u,v\in A_1$ such that $r_u(t),r_v(t)\le 1$.
\end{enumerate}
\end{lemma}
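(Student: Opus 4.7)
The key tool is Lemma~\ref{main_strip}, which tells us that $\varphi$ restricts to a bimodule isomorphism $\varphi_1\colon(M^{\ot_A^s})_1\xrightarrow{\sim}A_2$ and to a linear surjection $\varphi_0\colon M_0^{\ot s}\twoheadrightarrow A_1$. My strategy is to translate each claim into a statement about $A_1$ and $A_2$, which is immediate at $t=s$, and then transport it down to $t<s$ through the factorisation $M^{\ot_A^s}=M^{\ot_A^t}\ot_A M^{\ot_A^{s-t}}$.

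The case $t=s$ is where everything falls out cleanly. For any $Z\in M_0^{\ot s}$ and $u\in A_1$, the bimodule identity $\varphi(Zu)=\varphi(Z)u$ together with the injectivity of $\varphi_1$ identifies $M_0^{\ot s}\cdot u$ with $A_1u$; hence $r_u(s)=\dim(A_1u)\le 2$ for \emph{every} $u\in A_1$, which is item~1. For item~2, given $Y$ with $\dim(A_1Y)\le 1$, the isomorphism $\varphi_1$ identifies $A_1Y$ with $A_1\varphi(Y)$, so setting $u:=\varphi(Y)$ yields $r_u(s)=\dim(A_1u)=\dim(A_1Y)\le 1$, and any $v$ linearly independent from $u$ satisfies $r_v(s)\le 2$ by item~1. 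Item~3 follows by applying this to two independent $Y_1,Y_2$ from the generating family: their images $\varphi(Y_i)$ span $A_1$ by surjectivity of $\varphi_0$, and each gives a $u$ with $r_u(s)\le 1$.

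For $t<s$ I would consider, for each $u\in A_1$, the natural map
\[
\psi_u\colon M_0^{\ot t}u\ot_\kk M_0^{\ot(s-t)}\longrightarrow (M^{\ot_A^s})_1,\qquad Zu\ot W\longmapsto Zu\ot W=Z\ot uW,
\]
whose image lies inside $M_0^{\ot t}\ot_\kk(uM_0^{\ot(s-t)})$ as a subspace of $(M^{\ot_A^s})_1\cong A_2$, and hence has dimension at most $\dim A_2\le 4$. The kernel of $\psi_u$ is governed by the linear left-module presentation of $M^{\ot_A^{s-t}}$: writing $L^{(s-t)}\subset A_1\ot M_0^{\ot(s-t)}$ for its relation subspace, a kernel element arises from a relation $\sum_i a_i\ot W_i\in L^{(s-t)}$ paired with $P\in M_0^{\ot t}$ precisely when every $Pa_i$ already lies in $M_0^{\ot t}u$. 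Comparing the source dimension $r_u(t)\cdot m^{s-t}$ (with $m=\dim M_0$) to $\dim\Im\psi_u$ and to the codimension cut out by $L^{(s-t)}$ then forces $r_u(t)\le 2$ for $u$ outside a proper closed subset of $\mathbb{P}(A_1)$, supplying two linearly independent directions as required for item~1. The extra hypotheses of items~2 and~3 strengthen the kernel bound further and drop the output from $2$ to $1$ in one or both directions.

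The principal obstacle I anticipate is making the kernel analysis uniform in $u$: I must rule out pathological configurations in which $L^{(s-t)}$ is so degenerate or so specially aligned that $r_u(t)\ge 3$ persists for every $u\in A_1$. Overcoming this will probably require invoking the remaining axioms of Definition~\ref{str} (especially items~3 and~4) to pin down the precise way $L^{(s-t)}$, $R^{(t)}$, and the bimodule identification $(M^{\ot_A^s})_1\cong A_2$ interact.
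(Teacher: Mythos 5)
Your $t=s$ case is fine, but it is the easy (and, for the paper's purposes, essentially unused) case: the lemma is invoked throughout with $t=1$ and $t=2$. For $t<s$ your proposal is a sketch with an acknowledged hole rather than a proof, and the hole is exactly where the content lies. The map $\psi_u$ you introduce gives no a priori control on $r_u(t)$: its image can be very small (even zero, when $uM_0^{\ot(s-t)}=0$), so the bound $\dim\Im\psi_u\le\dim A_2\le 4$ constrains $r_u(t)$ only after you bound the kernel, and bounding that kernel ``uniformly in $u$'' is precisely the difficulty you defer to ``the remaining axioms of Definition \ref{str}.'' Moreover, a generic dimension count of this kind could at best address item~1; you give no mechanism by which the hypotheses of items~2 and~3 ``drop the output from $2$ to $1$,'' since those hypotheses concern $\dim_\kk A_1Y$ for specific tails $Y\in M_0^{\ot t}$ and your count never ties $r_u(t)$ to such a quantity.

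The idea you are missing is short. For $u=\vp(X\ot Y)$ with $X\in M_0^{\ot(s-t)}$, $Y\in M_0^{\ot t}$, one has $r_u(t)\le\dim_\kk A_1Y$: if $Z_1u,\dots,Z_{k+1}u$ were linearly independent with $k=\dim_\kk A_1Y$, then since the $k+1$ elements $\vp(Z_i\ot X)\in A_1$ land, after right multiplication by $Y$, in the $k$-dimensional space $A_1Y$, some nontrivial combination satisfies $\bigl(\sum_i\alpha_i\vp(Z_i\ot X)\bigr)Y=0$; the identity $Z\vp(X\ot Y)=\vp(Z\ot X)Y$ (item~2 of Definition \ref{str}) then gives $\sum_i\alpha_iZ_iu=0$, a contradiction. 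All three items follow by choosing $u,v$ in the image of $\vp$ on decomposable tensors $X\ot Y$ with $\dim_\kk A_1Y$ as small as the hypotheses permit. This is the route the paper takes; your $\psi_u$ construction, as stated, does not substitute for it.
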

\begin{proof} Suppose that $u\in A_1$ can be represented in the form $u=\vp(X\ot Y)$ with $X\in M_0^{\ot (s-t)}$ and $Y\in M_0^{\ot t}$. Let us set $k=\dim_\kk A_1Y$.
Suppose that there is a set  $\{Z_i\}_{1\le i\le k+1}\subset M_0^{\ot t}$ such that the set $\{Z_iu\}_{1\le i\le k+1}\subset \left(M^{\ot_A t}\right)_1$ is linearly independent.
If there are $\alpha_i\in\kk$ ($1\le i\le k+1$) not all zero such that $\left(\sum\limits_{i=1}^{k+1}\alpha_i\vp(Z_i\ot X)\right)Y=0$, then we have
$$
\sum\limits_{i=1}^{k+1}\alpha_iZ_iu=\sum\limits_{i=1}^{k+1}\alpha_iZ_i\vp(X\ot Y)=\sum\limits_{i=1}^{k+1}\alpha_i\vp(Z_i\ot X)Y=0,
$$
which is a contradiction since $\{Z_iu\}_{1\le i\le k+1}$ is linearly independent. The obtained contradiction shows that $r_u(t)\le \dim_\kk A_1Y$.

\begin{enumerate}
\item Since the map $\vp|_{M_0^{\ot s}}:M_0^{\ot s}\rightarrow A_1$ is surjective, there are such $X_u,X_v\in M_0^{\ot (s-t)}$ and $Y_u,Y_v\in M_0^{\ot t}$ that $u=\vp(X_u\ot Y_u)$ and $v=\vp(X_v\ot Y_v)$ generate $A_1$. Then we have $r_u(t)\le \dim_\kk A_1Y_u\le 2$ and $r_v(t)\le \dim_\kk A_1Y_v\le 2$.
\item Since $\vp(M_0^{\ot s})Y=A_1Y\not=0$, there are such  $Z\in M_0^{\ot t}$ and $X\in M_0^{\ot (s-t)}$ that $Z\vp(X\ot Y)=\vp(Z\ot X)Y\not=0$, and hence $\vp(X\ot Y)\not=0$.
Then we may choose $Y_u=Y$ in the previous part of our proof and get $r_u(t)\le \dim_\kk A_1Y=1$.
\item In this case $M_0^{\ot s}$ is generated by elements of the form $X\ot Y$ with $X\in M_0^{\ot (s-t)}$, $Y\in M_0^{\ot t}$, and $\dim_\kk A_1Y\le 1$. Thus, we can choose $Y_u$ and $Y_v$ above in such a way that $\dim_\kk A_1Y_u\le 1$ and $\dim_\kk A_1Y_v\le 1$.
\end{enumerate}
\end{proof}

Note that we can apply Lemma \ref{main_rest} to the $s$-homogeneous triple $(A^{\rm op},M,\vp)$ and obtain the analogous assertion with $r_u(t)$ replaced by $l_u(t)$. The same argument works for all assertions that we will prove (with appropriate modifications). Thus,
whenever we prove some statement for right modules, we will propose the analogous fact for left modules.
Most of the time we will apply Lemma \ref{main_rest} with $t=1$. Let us give an immediate corollary of it.

\begin{coro}\label{first_coro}
 In the settings of Lemma \ref{main_rest}, $\dim_\kk \left(M^{\ot_A t}\right)_1\le 4$ and $\left(M^{\ot_A t}\right)_A$ does not contain
any direct summands isomorphic to $B_n(\alpha,\beta)$ $((\alpha,\beta)\in\mathbb{P}^1)$, $Z_{n+1}$ and $W_n$ with $n\ge 3$.
 \end{coro}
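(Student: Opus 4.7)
The plan is to extract both conclusions from Lemma \ref{main_rest}(1), which supplies linearly independent $u,v\in A_1$ with $r_u(t),r_v(t)\le 2$. For the dimension bound, I would first note that by the result of \cite{MSV} cited above, each $M^{\ot_A t}$ is linearly presented as a right $A$-module; since $A_0=\kk$, this gives $(M^{\ot_A t})_0=M_0^{\ot t}$ and $(M^{\ot_A t})_1 = M_0^{\ot t}\cdot A_1 = M_0^{\ot t}\,u + M_0^{\ot t}\,v$, whence $\dim_{\kk}(M^{\ot_A t})_1 \le r_u(t) + r_v(t) \le 4$.

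For the exclusion of direct summands, the key observation is that if $N$ is a direct right-$A$-summand of $M^{\ot_A t}$, then right multiplication by any $w\in A_1$ restricts from $M^{\ot_A t}$ to $N$, so $\dim_{\kk}(N_0\cdot w) \le r_w(t)$. I would then compute $\dim_{\kk}(N_0\cdot w)$ explicitly for each of the three families from Subsection \ref{ind_sec} using the presentations given there. A direct calculation should yield: for $N=W_n$, taking the basis $\{f_iy\}_{i=1}^n\cup\{f_nx\}$ of the degree-one part, $\dim(N_0\cdot w)=n$ for every nonzero $w$; for $N=Z_{n+1}$, using the basis $\{e_i:=f_ix=f_{i+1}y\}_{i=1}^n$, again $\dim(N_0\cdot w)=n$ for every nonzero $w$; and for $N=B_n(\alpha,\beta)$, setting $u_0=\alpha x-\beta y$ and $v_0=\tilde\alpha x-\tilde\beta y$ and using the basis $\{f_iv_0\}_{i=1}^n$, $\dim(N_0\cdot w)=n$ whenever $w\notin\kk u_0$ while $\dim(N_0\cdot u_0)=n-1$.

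With these computations in hand, if any such $N$ with $n\ge 3$ were a direct summand of $(M^{\ot_A t})_A$, then for any pair of linearly independent $w_1,w_2\in A_1$ at most one of them could lie in the single exceptional direction $\kk u_0$ of the $B_n(\alpha,\beta)$ case (and no exceptional direction exists at all for $W_n$ or $Z_{n+1}$). Consequently $\max\bigl(\dim(N_0\cdot w_1),\dim(N_0\cdot w_2)\bigr)\ge n\ge 3$, which forces $\max(r_{w_1}(t),r_{w_2}(t))\ge 3$ for every linearly independent pair, contradicting Lemma \ref{main_rest}(1).

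The main obstacle I expect is carrying out the direction-dependent rank computation for $B_n(\alpha,\beta)$, where the rank drops by one precisely along $\kk u_0$ and one must argue that this one exceptional line cannot accommodate both $w_1$ and $w_2$; the corresponding computations for $W_n$ and $Z_{n+1}$ are uniform in the direction and reduce to routine staircase arguments once a convenient basis of the degree-one component is chosen.
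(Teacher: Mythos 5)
Your proof is correct, and since the paper states this corollary without proof as an immediate consequence of Lemma \ref{main_rest}, your argument is exactly the intended one: the bound $\dim_\kk(M^{\ot_At})_1\le r_u(t)+r_v(t)\le 4$ from generation in degree zero, plus the observation that $N_0w\subseteq M_0^{\ot t}w$ for a direct summand $N$ together with the rank computations $\dim(N_0w)=n$ (with the single exceptional line $\kk(\alpha x-\beta y)$ for $B_n(\alpha,\beta)$, which cannot contain both of two linearly independent elements). The explicit basis calculations for $Z_{n+1}$, $W_n$ and $B_n(\alpha,\beta)$ all check out.
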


 We are going to prove a series of claims that improve the assertion of Corollary \ref{first_coro}, but first we need to introduce some notation.
 
The notation $N\ds_A K$ means that  $N$ is an $A$-module direct summand of $K$. We will write $N\nds_A K$ for the opposite assertion.

We claim that if $W_1 \ds_A K\otimes_A N$ for an $A$-module $K$ and an $A$-bimodule $N$, then $W_1 \ds_A N$.
To see the claim first observe that  $W_1\cong A_A$ and  choose some $A$-module epimorphism $\pi:W_1^n\rightarrow K$. Then there is an epimorphism $\pi\ot_A1_N:N^n\rightarrow K\ot_AN$. Hence, if $W_1\ds_AK\ot_AN$, then there is an $A$-module epimorphism from $N^n$ to $W_1$. Since $W_1$ is projective, $W_1\ds_AN^n$, and hence $W_1\ds_AN$.
 This shows the claim.
 
 \begin{lemma}\label{W1} In the settings of Lemma \ref{main_rest}, we have $W_1\ds_AM^{\ot_At}$ if and only if $A\cong A^7(0)$. In this case $M_A\cong W_1\oplus Z_1^l$ for some $l\ge 1$.
 \end{lemma}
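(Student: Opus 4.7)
The plan is to combine the just-stated claim (``$W_1\ds_A K\ot_A N$ forces $W_1\ds_A N$'') with Lemma \ref{main_strip} and the table of right-module decompositions of $J(A)[1]_A$. I will use repeatedly that $W_1\cong A_A$ is free of rank one, so $W_1\ot_A N\cong N$ as right $A$-modules for every graded $A$-bimodule $N$.

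For the forward direction, assume $W_1\ds_A M^{\ot_A t}$. Iterating the claim along the factorizations $M^{\ot_A k}=M\ot_A M^{\ot_A(k-1)}$ for $k=t,t-1,\dots,2$ produces $W_1\ds_A M$. Fix a right-module splitting $M\cong W_1\oplus M''$; tensoring it on the right with the bimodule $M^{\ot_A(k-1)}$ (an additive operation on right modules) yields the right-module decomposition
\[
M^{\ot_A k}\cong W_1\ot_A M^{\ot_A(k-1)}\oplus M''\ot_A M^{\ot_A(k-1)}\cong M^{\ot_A(k-1)}\oplus M''\ot_A M^{\ot_A(k-1)},
\]
so $M^{\ot_A(k-1)}\ds_A M^{\ot_A k}$ for each $k\ge 1$. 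Chaining these from $k=s$ down to $k=1$ gives $M\ds_A M^{\ot_A s}$. By Lemma \ref{main_strip}, $M^{\ot_A s}\cong S\oplus J(A)[1]$ with $S$ concentrated in degree $0$, and $W_1$ is not a summand of $S$ since $W_1$ has nonzero components in every positive degree; hence $W_1\ds_A J(A)[1]$. Inspection of the table in Subsection \ref{ind_sec} excludes every algebra of Lemma \ref{class} except $A\cong A^7(0)$.

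Now specialize to $A=A^7(0)$, where the table gives $S\oplus J(A)[1]\cong W_1\oplus Z_1^{\dim_\kk S+1}$ as right modules. Applying Krull-Schmidt to $M\ds_A S\oplus J(A)[1]$, the right module $M$ is a direct sum of copies of $W_1$ and $Z_1$ with $W_1$ appearing at most once; combined with $W_1\ds_A M$, this yields $M\cong W_1\oplus Z_1^l$ for some $l\ge 0$. To see $l\ge 1$, I would compare degree-zero parts: $(M^{\ot_A s})_0=M_0^{\ot s}$ has dimension $(1+l)^s$, whereas $\dim_\kk(S\oplus J(A)[1])_0=\dim_\kk S+2\ge 2$. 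Since $s>2$, the equality $(1+l)^s=\dim_\kk S+2\ge 2$ forces $l\ge 1$.

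The converse is read off at once: if $A\cong A^7(0)$ then $W_1\ds_A J(A)[1]\ds_A M^{\ot_A s}$ by the table and Lemma \ref{main_strip}, and iterated application of the claim along $M^{\ot_A k}=M\ot_A M^{\ot_A(k-1)}$ extracts $W_1\ds_A M^{\ot_A k}$ for every $1\le k\le s$. The only genuinely subtle point is the first iteration in the forward direction: $M\cong W_1\oplus M''$ is only a right-module splitting of a bimodule, and yet it has to be tensored with bimodules. This is legitimate because tensoring on the right with an $A$-bimodule is an additive endofunctor of right $A$-modules, and $W_1\ot_A(-)$ is naturally isomorphic to the identity endofunctor of right $A$-modules.
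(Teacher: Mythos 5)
Your proof is correct and follows essentially the same route as the paper's: descend $W_1$ to a summand of $M$ using projectivity of $W_1\cong A_A$, deduce $M\ds_A M^{\ot_A s}$ by induction on the tensor power, and invoke Lemma \ref{main_strip} together with the table of decompositions to force $A\cong A^7(0)$. The only (harmless) divergence is the last step: the paper shows $K$ is concentrated in degree zero via the inequality $2+\dim_\kk K_1=\dim_\kk M_1\le\dim_\kk (M^{\ot_A s})_1=2$, whereas you apply Krull--Schmidt to $M\ds_A W_1\oplus Z_1^{\dim_\kk S+1}$; both work, and your degree-zero count giving $l\ge 1$ makes explicit a point the paper leaves implicit.
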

 \begin{proof} By the argument above, we have $W_1\ds_AM$ if $W_1\ds_AM^{\ot_Ak}$ for some $k\ge 1$. In particular, if $A\cong A^7(0)$, then $W_1\ds_AJ(A)[1]\ds_AM^{\ot_A s}$, and hence $W_1\ds_AM$. 
 
  Suppose now that $M_A\cong W_1\oplus K$ for some $A$-module $K$. Let us prove by induction that $M\ds_AM^{\ot_A k}$  for any $k\ge 1$. This statement is obviously true for $k=1$. For $k>1$ we have the isomorphism
 $$\left(M^{\ot_A k}\right)_A\cong (W_1\oplus K)\ot_AM^{\ot_A (k-1)}\cong \left(M^{\ot_A (k-1)}\right)_A\oplus \left(K\ot_AM^{\ot_A (k-1)}\right),$$
 and hence $M\ds_AM^{\ot_A k}$ follows from $M\ds_AM^{\ot_A (k-1)}$.
 In particular, $W_1\ds_AM^{\ot_A s}\cong J(A)[1]\oplus S$ with $S$ concentrated in degree zero by Lemma \ref{main_strip}, and hence $A\cong A^7(0)$ due to Table of decompositions presented in Subsection \ref{ind_sec}.
 Suppose that $A\cong A^7(0)$ and $M\cong W_1\oplus K$. Since $M\ds_AM^{\ot_A s}$ in this case, we have
 $$2+\dim_\kk K_1=\dim_\kk M_1\le\dim_\kk (M^{\ot_A s})_1=\dim_\kk J(A)_2=2.$$
 Thus, in this case $K$ is concentrated in degree zero.
 \end{proof}

 As usually, the statement dual to Lemma \ref{W1} is true too. Note that in such a statement $A^7(0)$ has to be replaced by $A^6$.
We will continue the search of restrictions on the direct summands of $M$ and its tensor powers, but due to Lemma \ref{W1} it is convenient to exclude the algebras $A^6$ and $A^7(0)$ first.
 
\subsection{$s$-homogeneous triples with $A=A^6 \text{ or } A = A^7(0)$}

In this subsection  we will classify the $s$-homogeneous algebras $\Lambda$ with two relations such that $\FF(\Lambda^!)=(A,M,\vp)$ with $A\in\{A^6,A^7(0)\}$. Note that $A^7(0)=(A^6)^{\rm op}$, and hence, if $\FF(\Lambda^!)=(A^7(0),M,\vp)$, then $\FF\big((\Lambda^{\rm op})^!\big)=(A^6,M^{\rm op},\vp^{\rm op})$. Thus, one can consider the case $A=A^6$ and then apply ${}^{\rm op}$ to get the classification for $A=A^7(0)$. So we set $A=A^6$ in our discussion.

Let us consider the $s$-homogeneous triple $(A,M,\vp)$. We have ${}_AM=W_1'\oplus K'$ for some $K'$ concentrated in degree zero by Lemma \ref{W1}. Then $\dim_\kk M_1=2$, and so the right $A$-module structure of $M$ can be recovered using the following lemma.

\begin{lemma}\label{zero_x} Suppose that $(A^6,M,\vp)$ is an $s$-homogeneous triple. Then $M_0x=0$.
\end{lemma}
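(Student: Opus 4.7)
The plan is to exploit two ingredients: the restriction $M_0\cdot x\subseteq\kk\, xm_A$ coming from a left-$J(A)$-annihilator argument, and the vanishing $(M^{\ot_A^s})\cdot x=0$ coming from Lemma \ref{main_strip} together with the right-module description $J(A)[1]_A\cong B_1(1,0)^{\oplus 2}$ in the table of Subsection \ref{ind_sec}. Combining these via a pure-tensor manipulation will force $M_0\cdot x=0$.

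To set up, note that by the dual of Lemma \ref{W1} (valid since $A^6\ne A^7(0)$) we have ${}_AM\cong W_1'\oplus(Z_1')^l$, with a distinguished left-$A$-free generator $m_A\in M_0$ and trivial-module generators $n_1,\dots,n_l\in M_0$; in particular $M_1=A_1 m_A=\kk xm_A\oplus\kk ym_A$. A direct computation in $A^6$ shows that the left-$J(A)$-annihilator of $M_1$ equals $\kk xm_A$, since $x\cdot ym_A=xym_A\ne 0$ while $x\cdot xm_A=y\cdot xm_A=0$. For any $m\in M_0$ one has $J(A)\cdot(m\cdot x)=(J(A)m)\cdot x\subseteq M_{\ge 1}\cdot x=0$ (because $A_{\ge 1}\cdot x=0$ in $A^6$), so $m\cdot x=\lambda_m\, xm_A$ for a scalar $\lambda_m\in\kk$; set $\sigma:=\lambda_{m_A}$. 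Meanwhile, Lemma \ref{main_strip} and the table give $(M^{\ot_A^s})_A\cong S_A\oplus B_1(1,0)^{\oplus 2}$ with $S$ in degree $0$, on all of which right multiplication by $x$ is zero. Iteratively sliding $x$ leftward through the $s-1$ preceding factors via $m'\ot xm_A=(m'\cdot x)\ot m_A=\lambda_{m'}\, xm_A\ot m_A$ then yields
$$(m^{\ot_A^s})\cdot x=\lambda_m^s\, x\cdot m_A^{\ot_A^s}\in (M^{\ot_A^s})_1,$$
which must vanish; and since $S$ contributes nothing in positive degree, $\vp$ is an isomorphism on $(M^{\ot_A^s})_1$, so $x\cdot m_A^{\ot_A^s}=0$ is equivalent to $\vp(m_A^{\ot_A^s})\in\kk x$ (as $x\cdot y=xy\ne 0$ in $A^6$).

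The main obstacle is therefore to rule out $\vp(m_A^{\ot_A^s})\in\kk x$. For this I would prove by induction on the position $k$ of the first $n$-factor that $\vp(v_1\ot\cdots\ot v_s)\in\kk x$ for every basis pure tensor in $M_0^{\ot s}$ (each $v_i\in\{m_A,n_1,\dots,n_l\}$) with at least one $v_j\in\{n_1,\dots,n_l\}$. The base case $k=1$ holds because $J(A)\cdot(v_1\ot\cdots)=0$ forces $\vp$ into the left-$J(A)$-annihilator of $A_1$, which in $A^6$ equals $\kk x$. For $k\ge 2$ one has $v_1=m_A$; applying condition $(2)$ of Definition \ref{str} with $m_i=v_i$ for $i\le s$ and $m_{s+1}=m_A$, the tensor $v_2\ot\cdots\ot v_s\ot m_A$ has first $n$-factor in position $k-1$, so by induction $\vp(v_2\ot\cdots\ot m_A)=\rho x$ for some $\rho\in\kk$; then $m_A\cdot\rho x=\rho\sigma\, xm_A\in\kk xm_A$ must equal $\vp(v_1\ot\cdots\ot v_s)\cdot m_A=p\, xm_A+q\, ym_A$, and the direct sum $M_1=\kk xm_A\oplus\kk ym_A$ forces $q=0$. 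Combined with the hypothesis $\vp(m_A^{\ot s})\in\kk x$, this would give $\vp(M_0^{\ot s})\subseteq\kk x$, contradicting $\Im\vp=J(A)[1]$ whose degree-$0$ part is $A_1=\kk x\oplus\kk y$. Hence $x\cdot m_A^{\ot_A^s}\ne 0$, which forces $\lambda_m=0$ for every $m\in M_0$, i.e.\ $M_0\cdot x=0$.
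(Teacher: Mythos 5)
Your overall strategy is workable, and its second half is correct: given that $m\cdot x=\lambda_m\,xm_A$ for all $m\in M_0$, the sliding identity $(m^{\ot_A s})\cdot x=\lambda_m^s\,x\cdot m_A^{\ot_A s}$, the vanishing $(M^{\ot_A s})\cdot x=0$, and your induction via condition (2) of Definition \ref{str} (showing $\vp(v_1\ot\cdots\ot v_s)\in\kk x$ for every pure tensor containing an $n$-factor, hence $\vp(m_A^{\ot s})\notin\kk x$ and $x\cdot m_A^{\ot_A s}\ne 0$) do finish the proof; this is a legitimate alternative to the paper's quicker observation that $x\,M_0^{\ot s}\ne 0=M_0^{\ot s}x$ once $M_0x=xM_0$. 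The problem is your first step. The claim $m\cdot x=\lambda_m\,xm_A$ is exactly the assertion $M_0x\subseteq xM_0$, and your justification --- ``$M_{\ge 1}\cdot x=0$ because $A_{\ge 1}\cdot x=0$ in $A^6$'' --- conflates right multiplication by $x$ inside the algebra $A^6$ with the right action of $x$ on the bimodule $M$. At this stage only the left structure ${}_AM\cong W_1'\oplus(Z_1')^l$ is known, so $M_{\ge 1}\cdot x=(A_{\ge 1}m_A)\cdot x=A_{\ge 1}(m_A\cdot x)$; since $A_{\ge 1}\cdot(xm_A)=0$ while $A_{\ge 1}\cdot(ym_A)\ne 0$, the vanishing of $M_{\ge 1}\cdot x$ is \emph{equivalent} to $m_A\cdot x\in\kk xm_A$, i.e.\ to the very claim being proved (for $m=n_j$ your annihilator argument does work, since $J(A)n_j=0$; the gap is precisely at $m=m_A$). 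Everything downstream --- the sliding computation and the inductive step, which uses $m_A\cdot\rho x=\rho\sigma\,xm_A\in\kk xm_A$ --- collapses without it.

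The paper closes exactly this gap by a separate degree-two argument: if $M_0x\not\subseteq xM_0$, then $M_0x$ has a nonzero $yM_0$-component, so $M_1x=A_1(M_0x)=M_2$, whence $(M^{\ot_A s})_2=\pi\big(M_0^{\ot(s-1)}\ot M_2\big)=\pi\big(M_0^{\ot(s-1)}\ot M_1x\big)\subseteq(M^{\ot_A s})_1\cdot x=0$, contradicting $(M^{\ot_A s})_2\cong J(A^6)_3\ne 0$. You need to insert this (or an equivalent) computation in place of your annihilator one-liner; with that done, the remainder of your proof goes through.
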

\begin{proof} Note that we have a direct decomposition $M_1=xM_0\oplus yM_0$, where $\dim_\kk xM_0=\dim_\kk yM_0=1$. We have $A_1(xM_0)=0$ and $A_1(yM_0)=M_2$, and hence, if $M_0x\not\subset xM_0$, then $M_2=A_1M_0x=M_1x$.
Let us denote by $\pi:M^{\ot s}\twoheadrightarrow M^{\ot_A s}$ the canonical projection.
Then we have $(M^{\ot_A s})_2=\pi\big(M_0^{\ot (s-1)}\ot M_2\big)=\pi\big(M_0^{\ot (s-1)}\ot M_1x\big)\subset (M^{\ot_A s})_1x=0$ that is not true.

Thus, $M_0x\subset xM_0$, i.e. $M_0x=xM_0$ in the case $M_0x\not=0$. On the other hand, if $M_0x=xM_0$, then $x M_0^{\ot s}=M_0^{\ot s}x$ which is not true. Thus, $M_0x=0$.
\end{proof}

Lemma \ref{zero_x} implies that $M_A\cong B_1(1,0)\oplus  B_1(1,0)\oplus K$ for some $K$ concentrated in degree zero. Now we are ready to describe $s$-homogeneous algebras with two relations corresponding to $A^6$ and $A^7(0)$.

\begin{theorem}\label{A6A70} Suppose that $\Lambda=T_\kk V/(f_1, f_2)$, where $f_1$ and $f_2$ are two linearly independent elements of $V^{\ot s}$.\\
1. If $(\Lambda^!)^{(s)}\cong\kk\langle x,y\rangle/(x^2,yx)=A^6$, then
$$\Lambda\cong\kk\langle x_1,\dots,x_m,y_1,y_2\rangle/(y_1^s,y_2y_1^{s-1})$$
for some $m\ge 0$.\\
2. If $(\Lambda^!)^{(s)}\cong\kk\langle x,y\rangle/(x^2,xy)=A^7(0)$, then
$$\Lambda\cong\kk\langle x_1,\dots,x_m,y_1,y_2\rangle/(y_1^s,y_1^{s-1}y_2)$$
for some $m\ge 0$.

In particular, if $(\Lambda^!)^{(s)}\in\{A^6,A^7(0)\}$, then $\Lambda$ is $s$-Koszul.
\end{theorem}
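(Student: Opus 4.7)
The plan exploits the equivalence between $s$-homogeneous algebras and $s$-homogeneous triples together with the uniqueness following Lemma~\ref{main_strip}. Since $A^7(0)=(A^6)^{\rm op}$ and the operation $\Lambda\mapsto\Lambda^{\rm op}$ sends the triple of $\Lambda$ to the opposite triple and the relation $y_2y_1^{s-1}$ to $y_1^{s-1}y_2$, part~$2$ reduces to part~$1$. I therefore focus on part~$1$ with $(A,M,\vp)=\FF(\Lambda^!)$ and $A=A^6$.

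First, to pin down the bimodule $M$: Lemma~\ref{W1} applied on the left (to $A^{\rm op}=A^7(0)$) gives ${}_AM\cong W_1'\oplus(Z_1')^{m+1}$ for some $m\ge 0$; Lemma~\ref{zero_x} combined with the classification of indecomposable right $A^6$-modules from Subsection~\ref{ind_sec} and a dimension count forces $M_A\cong B_1(1,0)^{\oplus 2}\oplus Z_1^{\oplus m}$. A short calculation shows that $A^6$ is \emph{good}, i.e.\ that $J(A^6)[1]$ carries no nonzero $A^6$-bimodule summand concentrated in degree~$0$: any $v=\alpha x+\beta y\in A_1$ with $A_1v=vA_1=0$ would satisfy $xv=\beta xy=0$ and $vy=\alpha xy+\beta y^2=0$, forcing $v=0$. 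By the remark after Lemma~\ref{main_strip}, $(A,M,\vp)$ is then determined up to isomorphism by the pair $(A,M)$; a normalization argument, exploiting the freedom to translate the left-generator $\lambda$ of $W_1'$ and the right-generators $\mu_1,\mu_2$ of the $B_1(1,0)$-summands by elements of the trivial summands, shows that $M$ itself depends, up to bimodule isomorphism, only on the integer $m$.

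To realize the triple I take $\Lambda_0=\kk\langle x_1,\dots,x_m,y_1,y_2\rangle/(y_1^s,y_2y_1^{s-1})$ and compute $\FF(\Lambda_0^!)$ directly. A monomial in the dual generators $x_i^*,y_j^*$ survives in $\Lambda_0^!$ iff every length-$s$ window of it lies in $\{y_1^{*s},y_2^*y_1^{*(s-1)}\}$; this gives $\dim\Lambda_0^!_{sk+r}=2$ for every $k\ge 1$ and $r\in\{0,1\}$. Setting $X=y_1^{*s}$ and $Y=y_2^*y_1^{*(s-1)}$, direct multiplication shows $X^2,YX\ne 0$ while $XY=Y^2=0$, so $(\Lambda_0^!)^{(s)}\cong\kk\langle X,Y\rangle/(XY,Y^2)\cong A^6$ via $X\leftrightarrow y$, $Y\leftrightarrow x$; an analogous computation identifies $(\Lambda_0^!)^{(s,1)}$ with the bimodule $M$ of the previous paragraph. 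The uniqueness above then yields $\Lambda\cong\Lambda_0$.

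Finally, for $s$-Koszulity I use the free-product decomposition $\Lambda_0=\kk\langle x_1,\dots,x_m\rangle*\Gamma$ with $\Gamma=\kk\langle y_1,y_2\rangle/(y_1^s,y_2y_1^{s-1})$; a combinatorial count of words in $\Gamma$ (parametrized by runs of $y_1$'s of length $\le s-1$ at the start and $\le s-2$ after each $y_2$) gives $\HH_\Gamma(t)=(1-t^s)/(1-2t+t^s)$, and the standard free-product formula then yields $\HH_{\Lambda_0}(t)^{-1}=1-(m+2)t+t^s+mt^{s+1}$, which is readily checked to satisfy \eqref{HSm}. Since $\Lambda_0$ is monomial, a direct computation of its minimal projective resolution of $\kk_{\Lambda_0}$, inductively controlled by the right-annihilator of $y_1^{s-1}$, shows $F_{2k}\cong\Lambda_0^{\oplus 2}$ generated in degree $ks$ and $F_{2k+1}\cong\Lambda_0^{\oplus 2}$ generated in degree $ks+1$ for $k\ge 1$; hence the generalized Koszul complex is exact in positive degrees, proving $s$-Koszulity. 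The main technical hurdle is the rigidity step in the second paragraph, where the bimodule axioms must be carefully reconciled with the automorphism group of $A^6$ in order to normalize $M$ up to isomorphism.
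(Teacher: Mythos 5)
Your overall architecture matches the paper's (reduce $A^7(0)$ to $A^6$ via ${}^{\rm op}$, pin down the one--sided structures by Lemmas \ref{W1} and \ref{zero_x}, then use goodness of $A^6$ and the remark after Lemma \ref{main_strip}), and your identification of the model algebra by computing $\FF(\Lambda_0^!)$ directly is a reasonable variant of the paper's computation of $\Ker\vp$. However, there is a genuine gap in your second paragraph. The claim that $M$ is determined up to bimodule isomorphism by $m$ alone, via translating generators by elements of the trivial summands and twisting by automorphisms of $A^6$, is false. After normalizing $xf_1=f_2y$ and $yf_1=af_1y$ (with $A_1f_2=A_1K=0$, $f_ix=0$), the scalar $a\in\kk^*$ is a genuine invariant of the bimodule: translating $f_1$ by elements of $\langle f_2\rangle+K$ leaves $a$ unchanged (those elements are left-annihilated by $A_1$), rescaling $f_1,f_2$ leaves it unchanged (the relation $yf_1=af_1y$ is homogeneous in $f_1$), and every graded automorphism of $A^6$ has the form $x\mapsto\alpha x$, $y\mapsto\gamma x+\delta y$, so twisting rescales $yf_1$ and $f_1y$ by the same factor $\delta$ and again preserves $a$. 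Thus there is a one-parameter family of pairwise non-isomorphic bimodules with the prescribed one-sided structures, and your uniqueness argument (matching $(\Lambda^!)^{(s,1)}$ against $(\Lambda_0^!)^{(s,1)}$) does not go through as written.

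What actually kills the parameter is not bimodule rigidity but the triple axioms: since $\vp$ is a bimodule homomorphism with $1_M\ot_A\vp=\vp\ot_A 1_M$, and $\vp(f_1^{\ot s})=cx+dy$ with $d\neq 0$ (everything else lands in $\langle x\rangle$), the identity $f_1\vp(f_1^{\ot s})=\vp(f_1^{\ot s})f_1$ reads $df_1y=adf_1y+cf_2y$, forcing $a=1$ and $c=0$. This is exactly the step the paper performs, and it must be inserted before you can invoke goodness of $A^6$ to conclude $\Lambda\cong\Lambda_0$. Two minor further points: your formula $\HH_{\Lambda_0}(t)^{-1}=1-(m+2)t+t^s+mt^{s+1}$ is missing a factor, the correct identity being $\HH_{\Lambda_0}(t)=\dfrac{1-t^s}{1-(m+2)t+t^s+mt^{s+1}}$; and recall that \eqref{HSm} alone does not imply $s$-Koszulity, so your resolution (or a Gr\"obner-basis) argument is indeed needed for the last assertion.
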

\begin{proof} As it was explained above, we will consider only the case where $\FF(\Lambda^!)\cong (A^6,M,\vp)$ and then will get the result for $A^7(0)$ via the functor ${}^{\rm op}$. Let us set $A=A^6$ and fix the $A$-module decomposition $M=B_1(1,0)\oplus  B_1(1,0)\oplus K$ with $A$-module $K$ concentrated in degree zero. Our first step is to recover the $A$-bimodule structure on $M$.

First of all, note that $A_1K\subset l\Ann_{M_1}(y)=0$. Then $r\Ann_{M_0}(x)=r\Ann_{M_0}(y)$ is a subspace of $M_0$ of codimension one containing $K$. Let $f_1,f_2$ be such a basis of $\big(B_1(1,0)\oplus  B_1(1,0)\big)_0\subset M_0$ that $A_1f_2=0$. i.e. $f_2$ is a complementary vector of $K$ in $r\Ann_{M_0}(A_1)$ and $f_1$ is a complementary vector of $\langle f_2\rangle+K$ in $M_0$. Note that $f_1y^i,f_2y^i$ is a basis of $M_i$ for any $i\ge 1$. Since $0\not=xf_1\in M_1$ and $x(xf_1)=0$, it is easy to see that $xf_1=af_2y$ for some $a\in\kk^*$. Now we may rescale $f_2$ and turn $a$ into one.
Thus, we have $xf_1=f_2y$ and $yf_1=af_1y+bf_2y$ for some $a,b\in\kk$, $a\not=0$. Let us replace $f_1$ by $f_1+\frac{b}{a}f_2$. Now we have $xf_1=f_2y$ and $yf_1=af_1y$. Now one can show that tensors of the form $v_1\ot\dots\ot v_s\in M_0^{\ot s}$ such that either $v_i\in K$ for some $1\le i\le s$ or $v_i=f_2$ for some $2\le i\le s$ generate an $A$-bimodule concentrated in degree zero. The subspace generated by these tensors has codimension two in $M_0^{\ot s}$ and its complementary subspace is $\langle f_1^{\ot s},f_2\ot f_1^{\ot (s-1)}\rangle$. Since $A_1(f_2\ot f_1^{\ot (s-1)})=0$, we have $\vp(f_2\ot f_1^{\ot (s-1)})=bx$ and $\vp(f_1^{\ot s})=cx+dy$ for some $b,d\in\kk^*$, $c\in\kk$. Since $df_1y=f_1\vp(f_1^{\ot s})=\vp(f_1^{\ot s})f_1=adf_1y+cf_2y$, we have $a=1$ and $c=0$. It is not difficult to show that we get an $s$-homogeneous triple $(A^6,J(A^6)[1]\oplus \kk^m,\vp)$ with $m=\dim_\kk K$ that does not depend on $\vp$ modulo isomorphism because $J(A^6)[1]$ does not contain direct $A^6$-bimodule summands concentrated in degree zero.
Using the description of $(\Ker\vp)_0$ we can conclude that $\Lambda\cong \GG(A^6,J(A^6)[1]\oplus \kk^m,\vp)^!=T\big((J(A^6)_1\oplus \kk^m)^*\big)/\big(\vp^*(A^6)_1^*\big)$ has the required form.
\end{proof}

\subsection{More restrictions on the one side module structure}

Let us continue the study of $s$-homogeneous triples. Since we have fully considered $s$-homogeneous triples corresponding to the algebras $A^6$ and $A^7(0)$, we will proceed our discussion in the following settings:
\begin{equation}\label{settings}
\begin{array}{c}
(A,M,\vp)\mbox{ is an $s$-homogeneous triple},\\
A=\kk\langle x,y\rangle/H, \mbox{ for some quadratic ideal $H$, with } A\not\cong A^6,\mbox{ and } A\not\cong A^7(0)
\end{array}
\end{equation}

%In the settings \eqref{settings}, $N\nds_AM^{\ot_At}$ for $N\in\{Z_3,Z_2\oplus Z_2\}$ and any $1\le t\le s$.
 \begin{lemma}\label{Z}
 In the settings \eqref{settings}, $N\nds_AM$ for $N\in\{Z_3,Z_2\oplus Z_2\}$.
 \end{lemma}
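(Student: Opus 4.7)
I will assume, for contradiction, that $N \ds_A M$ for $N \in \{Z_3, Z_2 \oplus Z_2\}$ and write $M = N \oplus K$ as right $A$-modules; denote by $f_1,\dots,f_n$ (with $n=3$ or $4$) the generators of $N_0$ and by $g_1, g_2$ the basis of $N_1$. A direct reading of the defining relations of $Z_m$ gives $\dim(N_0\cdot u) = 2$ for every nonzero $u\in A_1$, and since $K_0 A_1 \subset K_1$ lies in a summand of $M_1$ disjoint from $N_1$, this yields $r_u(1) \ge 2$ for all nonzero $u\in A_1$.

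Lemma~\ref{main_rest}(1) with $t=1$ then forces $K_0 u = K_0 v = 0$ for some basis $u, v$ of $A_1$, hence $K_1 = K_0 A_1 = 0$ and $\dim M_1 = 2$. A short calculation shows that the set of $Y\in M_0$ with $\dim(YA_1)\le 1$ is, in coordinates on $N_0$, the quadric $\lambda_1\lambda_3 = \lambda_2^2$ (for $Z_3$) or $\lambda_1\lambda_4 = \lambda_2\lambda_3$ (for $Z_2\oplus Z_2$), together with all of $K_0$; it contains $n$ independent coordinate vectors, hence spans $M_0$. The dual of Lemma~\ref{main_rest}(3) then provides linearly independent $u, v\in A_1$ with $l_u(1) = l_v(1) = 1$, and writing $uY = \alpha(Y) m_u$, $vY = \beta(Y) m_v$ produces linear functionals $\alpha, \beta: M_0 \to \kk$ with $m_u, m_v$ linearly independent in $M_1$ (otherwise $\dim M_1 \le 1$).

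The argument now splits according to whether the two hyperplanes $\ker\alpha$ and $\ker\beta$ coincide. If they differ, then $\{Y : \dim A_1 Y \le 1\} = \ker\alpha \cup \ker\beta$ generates $M_0$, and the original version of Lemma~\ref{main_rest}(3) yields a nonzero $u' \in A_1$ with $r_{u'}(1) \le 1$, contradicting $r_{u'}(1) \ge 2$. In the remaining case $\ker\alpha = \ker\beta$, I would exploit the bimodule identities $(uf_i)\cdot a = u\cdot(f_i a)$ for $a \in \{x, y\}$: combining them with the vanishings $f_1 y = 0$, $f_n x = 0$ (and the obvious analogues for $Z_2 \oplus Z_2$) and with $uf_i = \alpha(f_i) m_u$, a brief case distinction on which of $m_u x$, $m_u y$, $\alpha(f_i)$ vanish forces $u g_1 = u g_2 = 0$, and symmetrically $v g_1 = v g_2 = 0$. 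Hence $M_2 = A_1 M_1 = 0$, so $(M^{\ot_A^s})_{s+1} = 0$; by Lemma~\ref{main_strip} this equals $J(A)[1]_{s+1} = A_{s+2}$, so $A_{s+2} = 0$. But inspection of Lemma~\ref{class} shows $\dim A_k \ge 1$ for every $k \ge 0$ and every $A$ allowed by~\eqref{settings}, yielding the desired contradiction. The main technical point is the degenerate subcase $\ker\alpha = \ker\beta$; the rest amounts to dimension counting and careful bookkeeping.
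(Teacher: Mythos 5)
Your opening step (that $r_w(1)=2+\dim_\kk(K_0w)\ge 2$ for every nonzero $w\in A_1$, whence $K_1=0$ and $\dim_\kk M_1=2$) coincides with the paper's, and your case $\ker\alpha\ne\ker\beta$ does reach a correct, if roundabout, contradiction. The problem is the concluding step of the degenerate subcase. From $M_2=0$ you deduce $(M^{\ot_A s})_{s+1}=0$ and hence $A_{s+2}=0$, and you claim this contradicts ``$\dim_\kk A_k\ge 1$ for every $k\ge 0$ and every $A$ allowed by \eqref{settings}.'' That inspection is false: the settings \eqref{settings} exclude only $A^6$ and $A^7(0)$, so $A=A^0=\kk\langle x,y\rangle/(x,y)^2$ is allowed, and $A^0_k=0$ for all $k\ge 2$. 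For $A=A^0$ your whole chain ($M_2=0$, hence $A_{s+2}=0$) is simply consistent and yields no contradiction, so the subcase $\ker\alpha=\ker\beta$ with $A=A^0$ is not ruled out. This is not a negligible corner: $A^0$ is one of the two algebras for which the later classification is actually carried out, and Lemma \ref{Z} is fed (via Corollary \ref{second_coro}) into precisely that analysis.

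The repair is already inside your computation. In the subcase $\ker\alpha=\ker\beta=:H$ you have $A_1Y=\langle\alpha(Y)m_u,\beta(Y)m_v\rangle$ with $m_u,m_v$ independent, so $\dim_\kk A_1Y=2$ for $Y\notin H$ and $A_1Y=0$ for $Y\in H$; a dimension count on the linear presentation then gives ${}_AM\cong W_1'\oplus(Z_1')^{\dim_\kk M_0-1}$, and $W_1'\ds_{A^{\rm op}}M$ contradicts the left-module version of Lemma \ref{W1} because $A\not\cong A^6$. This is essentially the paper's proof, which is shorter and avoids your case split entirely: since $r_w(1)\ge 2$ for all nonzero $w$, item 2 of Lemma \ref{main_rest} forbids any $Y\in M_0$ with $\dim_\kk A_1Y=1$, so every indecomposable summand of ${}_AM$ is of type $Z_1'$ or $W_n'$; as $\dim_\kk M_1=2$ this forces ${}_AM\cong W_1'\oplus K'$ with $K'$ concentrated in degree zero, against Lemma \ref{W1} --- no computation of $YA_1$ and no appeal to $\dim_\kk A_k$. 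Two smaller slips: for $N=Z_3$ the quadric $\lambda_1\lambda_3=\lambda_2^2$ does not contain the coordinate vector $f_2$ (indeed $\dim_\kk f_2A_1=2$), though the quadric still spans $N_0$, so that conclusion survives with a corrected justification; and the dual of item 3 only gives $l_u(1),l_v(1)\le 1$, with equality recovered afterwards from $\dim_\kk M_1=2$.
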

 \begin{proof} Suppose that $M_A\cong N\oplus K$ for some $A$-module $K$ and $N\in\{Z_3,Z_2\oplus Z_2\}$. For any nonzero $w\in A_1$, we have
$$
r_w(1)=\dim_\kk (N_0w)+\dim_\kk (K_0w)=2+\dim_\kk (K_0w)\ge 2.
$$
Then it follows from Lemma \ref{main_rest} that $\dim_\kk A_1Y\not=1$ for any
 $Y\in M_0$ and $K_0u=K_0v=0$ for two linearly independent $u,v\in A_1$, i.e. $K_1=K_0A_1=0$.
 The first condition means that indecomposable direct summands of ${}_AM$ are isomorphic to $Z_1'$ and $W_n'$ ($n\ge 1$). Since $\dim_\kk M_1=\dim_\kk N_1=2$, we have ${}_AM\cong W_1'\oplus K'$ for some $K'$ concentrated in degree zero that contradicts Lemma \ref{W1}. Thus, the required assertion is proved.
 \end{proof}

 \begin{lemma}\label{W2} In the settings \eqref{settings}, $W_2\nds_AM$.
 \end{lemma}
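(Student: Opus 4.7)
The plan is to argue by contradiction and narrow the possible bimodule structure of $M$ in stages, concluding with the hardest step.

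Suppose $W_2\ds_A M$ and write $M_A\cong W_2\oplus K$. Since both $(W_2)_0x$ and $(W_2)_0y$ are $2$-dimensional, $r_w(1)\ge 2$ for every nonzero $w\in A_1$, and Lemma~\ref{main_rest}(1) forces $r_u(1)=r_v(1)=2$ for two linearly independent $u,v\in A_1$, hence $K_0A_1=0$; by linear presentation, $K$ is concentrated in degree $0$, so $M_A\cong W_2\oplus Z_1^m$ and $\dim_\kk M_1=3$. Lemma~\ref{main_rest}(2) then yields $\dim_\kk A_1Y\ne 1$ for every $Y\in M_0$, since otherwise some $u\in A_1$ would satisfy $r_u(1)\le 1$.

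Turning to the left structure, from the classification in Subsection~\ref{ind_sec} one verifies that $Z_n'$ for $n\ge 2$ always admits $Y=f_1$ with $A_1Y=\kk yf_2$, and $B_n'(\alpha,\beta)$ for $n\ge 1$ admits $Y=f_n$ satisfying $(\alpha x-\beta y)f_n=0$, both giving $\dim_\kk A_1Y=1$; these modules are therefore excluded from ${}_AM$. Only $Z_1'$ and $W_n'$ (for which $\dim_\kk A_1Y\in\{0,2\}$) remain, and the dual of Corollary~\ref{first_coro} excludes $W_n'$ for $n\ge 3$, while the dual of Lemma~\ref{W1} excludes $W_1'$ since $A\not\cong A^6$. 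Combining this with $\dim_\kk M_1=3$, and noting that $W_2'$ contributes $3$ while $Z_1'$ contributes $0$, we must have ${}_AM\cong W_2'\oplus(Z_1')^m$.

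The main obstacle is ruling out this last configuration. Let $d_1,\dots,d_m$ span $r\Ann_{M_0}(A_1)$ (coming from the right $Z_1^m$ summand) and $e_1,\dots,e_m$ span $l\Ann_{M_0}(A_1)$ (from the left $(Z_1')^m$), with complements $\{f_1,f_2\}$ and $\{g_1,g_2\}$ generating the right $W_2$ and left $W_2'$ summands. A direct check through Lemma~\ref{class}, for every $A$ with $\dim_\kk H_2\ge 2$ other than $A^6$ and $A^7(0)$, confirms that every nonzero $w\in A_1$ satisfies $wA_1\ne 0$ and $A_1w\ne 0$. Hence $\vp(Y_1\otimes\cdots\otimes Y_s)=0$ whenever $Y_s\in r\Ann_{M_0}(A_1)$ or $Y_1\in l\Ann_{M_0}(A_1)$, because $\vp(Y_1\otimes\cdots\otimes Y_s)\in A_1$ would otherwise be annihilated on the right (respectively, the left) by all of $A_1$.

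My plan is to combine these vanishings with Definition~\ref{str}(2), the Frobenius-type identity $1_M\otimes_A\vp=\vp\otimes_A 1_M$, applied to tensors built from the distinguished elements $f_1,f_2,g_1,g_2$, together with the isomorphism $\theta\colon M^{\otimes_A^s}\cong S\oplus J(A)[1]$ of Lemma~\ref{main_strip} (with $S$ concentrated in degree $0$), in order to extract an inconsistency between the specific relations $f_1x=f_2y$ and $xg_1=yg_2$ and the surjectivity $\Im\vp=J(A)[1]$. Because the algebra $A$ is not pinned down, identifying the cleanest route to contradiction is delicate and may ultimately require a case analysis along Lemma~\ref{class}, or a uniform dimension count comparing $\dim_\kk(M^{\otimes_A^s})_1$ computed from the assumed bimodule data with $\dim_\kk A_2$.
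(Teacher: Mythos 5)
There is a genuine gap. Your reduction to the configuration $M_A\cong W_2\oplus Z_1^m$, ${}_AM\cong W_2'\oplus (Z_1')^m$ is correct and follows the same lines as the paper (constancy of $K$ in degree $0$ via Lemma \ref{main_rest}(1), exclusion of $B_n'$ and $Z_{\ge 2}'$ summands via Lemma \ref{main_rest}(2), exclusion of $W_1'$ and $W_{\ge 3}'$ via the duals of Lemma \ref{W1} and Corollary \ref{first_coro}, and the count $\dim_\kk M_1=3$). But the last, essential step is never carried out: you only announce a ``plan'' to extract a contradiction from the Frobenius identity and Lemma \ref{main_strip}, and you yourself concede that you do not see how to close it without an unspecified case analysis. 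As written, the configuration with $W_2$ on the right and $W_2'$ on the left is not ruled out, so the lemma is not proved. (Two smaller issues in that final part: your identification of $r\Ann_{M_0}(A_1)$ with the right $Z_1^m$ summand reverses the paper's convention, since $r\Ann$ refers to $aw=0$, i.e.\ the left action; and the one-line justification of the vanishing of $\vp$ on tensors with an annihilated entry does not actually follow as stated --- one needs the full argument of Lemma \ref{ker_vp}, whose hypothesis $M_0u\neq 0$ does hold here.)

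The missing idea is to pass to the tensor square rather than to analyze $\vp$ on $M^{\ot_A s}$ directly. A direct computation gives $\dim_\kk\bigl(W_2\ot_A W_2'\bigr)_1=4$, hence $\dim_\kk(M\ot_AM)_1=4$. Applying Lemma \ref{main_rest} with $t=2$, the existence of $f\in(M\ot_AM)_0$ with $\dim_\kk A_1f=1$ would force $r_u(2)\le 1$ and $r_v(2)\le 2$ for independent $u,v$, contradicting $\dim_\kk(M\ot_AM)_1=4$; so every indecomposable right summand of $M\ot_AM$ is $Z_1$ or some $W_n$. Since $\dim_\kk(W_n)_1=n+1$, the only ways to realize total dimension $4$ are $W_1\oplus W_1$ or $W_3$, the first contradicting Lemma \ref{W1} (as $A\not\cong A^7(0)$) and the second contradicting Corollary \ref{first_coro}. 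This is short, uniform in $A$, and avoids the delicate analysis of $\vp$ that your sketch would require.
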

 \begin{proof} Suppose that $M_A\cong W_2\oplus K$ for some $A$-module $K$.  Analogously to the proof of Lemma \ref{Z}, one can show that $K$ is concentrated in degree zero and indecomposable direct summands of ${}_AM$ are isomorphic to $Z_1'$ and $W_n'$ ($n\ge 1$). 
  Since $\dim_\kk M_1=\dim_\kk (W_2)_1$, we have ${}_AM\cong W_2'\oplus K'$ for some $K'$ concentrated in degree zero. A direct calculation shows that $\dim_\kk (W_2\ot_AW_2')_1=4$, and hence $\dim_\kk (M\ot_AM)_1=4$ too.
Thus, it follows from Lemma \ref{main_rest} that $\dim_\kk fA_1\not=1$ for any $f\in (M\ot_AM)_0$. Then indecomposable direct summands of $(M\ot_AM)_A$ are isomorphic to $Z_1$ and $W_n$ ($n\ge 1$). Since $\dim_\kk (M\ot_AM)_1=4$, we have either $W_1\oplus W_1\ds_AM\ot_AM$ that contradicts Lemma \ref{W1} or $W_3\ds_AM\ot_AM$ that contradicts Corollary \ref{first_coro}. Thus, $W_2\nds_AM$.
 \end{proof}

 \begin{lemma}\label{B2} In the settings \eqref{settings}, if $M_A\cong B_2(\alpha,\beta)\oplus K$  for some $(\alpha,\beta)\in\mathbb{P}^1$ and $A$-module $K$, then $K$ is concentrated in degree zero and $B_2'(\alpha',\beta')\ds_{A^{\rm op}}M$ for some $(\alpha',\beta')\in\mathbb{P}^1$.
 \end{lemma}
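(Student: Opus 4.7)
The plan is to exclude all possible non-constant indecomposable summands of $K_A$ in Part 1, and then identify the forced left-module structure in Part 2.

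For Part 1, I would first establish by a direct computation in $B_2(\alpha,\beta)$ that $r_u(1)$ of this summand equals $2$ for every $u \in A_1$ not proportional to $\alpha x - \beta y$, while it equals $1$ precisely at $u \propto \alpha x - \beta y$: indeed $f_2(\alpha x - \beta y) = 0$ by the defining relation, whereas $f_1 u$ and $f_2 u$ are linearly independent in $(B_2(\alpha,\beta))_1$ for every $u$ outside $\kk(\alpha x - \beta y)$, and $f_1 u \ne 0$ throughout. By Corollary \ref{first_coro}, Lemmas \ref{W1}, \ref{W2}, \ref{Z}, and the hypothesis $A \not\cong A^7(0)$ from settings \eqref{settings}, any non-constant indecomposable summand of $K_A$ is isomorphic to $B_1(\gamma,\delta)$, $B_2(\gamma,\delta)$, or $Z_2$.

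Each of these is then excluded via Lemma \ref{main_rest}. If $B_2(\gamma,\delta) \ds_A K$, summing right-ranks yields $r_u(M) \ge 3$ away from at most two projective directions, so no pair of linearly independent $u,v$ satisfies $r_u(M), r_v(M) \le 2$, contradicting Lemma \ref{main_rest}(1). If $Z_2 \ds_A K$, a direct check gives $r_u(Z_2) = 1$ for every nonzero $u$, so $r_u(M) \ge 2$ everywhere with equality only at $u \propto \alpha x - \beta y$, again contradicting Lemma \ref{main_rest}(1). The hardest case is $B_1(\gamma,\delta) \ds_A K$: here Lemma \ref{main_rest}(1) merely forces $(\gamma,\delta) \not\equiv (\alpha,\beta)$ in $\mathbb{P}^1$, and then Lemma \ref{main_rest}(2) implies that no $Y \in M_0$ satisfies $\dim_\kk A_1 Y = 1$ (since $r_u(M) \ge 2$ for every nonzero $u$). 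But every indecomposable left-module summand of ${}_A M$ with a nonzero degree-$1$ part contains some such $Y$---the generator of $B_1'(\gamma',\delta')$, the generator of $Z_2'$ annihilated by $y$, and the second generator of $B_2'(\gamma',\delta')$ all have $\dim_\kk A_1 Y = 1$, while $W_n'$ is excluded by the left-module analogues of Lemmas \ref{W1}, \ref{W2} and Corollary \ref{first_coro}. Hence ${}_A M$ would be concentrated in degree zero, contradicting $\dim_\kk M_1 \ge \dim_\kk (B_2(\alpha,\beta))_1 + \dim_\kk (B_1(\gamma,\delta))_1 = 3$.

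For Part 2, $M_A \cong B_2(\alpha,\beta) \oplus \kk^m$ forces $\dim_\kk M_0 = 2+m$ and $\dim_\kk M_1 = 2$. The left-side analogues of Corollary \ref{first_coro} and Lemmas \ref{W1}, \ref{W2}, \ref{Z} (using $A \not\cong A^6$ from settings \eqref{settings}) restrict the indecomposable summands of ${}_A M$ to $B_1'(\gamma,\delta), B_2'(\gamma,\delta), Z_1', Z_2'$, with $Z_2'$ occurring at most once. Then $\dim_\kk M_1 = 2$ leaves three possibilities modulo $Z_1'$ summands: a single $B_2'(\alpha',\beta')$ summand (the desired case), two $B_1'$ summands, or one $B_1'$ plus one $Z_2'$. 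To rule out the last two I would apply Lemma \ref{main_rest} to the opposite triple $(A^{\rm op}, M, \vp^{\rm op})$ using the witness $f_2 \in M_0$ with $\dim_\kk f_2 A_1 = 1$ (supplied by $B_2(\alpha,\beta)$) and run the dual of the argument from Part 1; the resulting incompatibility between left and right structures eliminates both cases and leaves a $B_2'$ summand on the left. The principal obstacle throughout is the $B_1$ (respectively $B_1'$) case, since Lemma \ref{main_rest}(1) does not directly rule it out, and the argument must carefully combine left- and right-side rank constraints via Lemma \ref{main_rest}(2).
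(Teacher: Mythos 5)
Your Part 1 is correct and reaches the conclusion that $K$ is concentrated in degree zero by a somewhat longer route than the paper (the paper gets $K_0A_1=0$ in one stroke from item 2 of Lemma \ref{main_rest}, since $r_{\alpha x-\beta y}(1)\ge 1$ and $r_w(1)\ge 2$ elsewhere force $K_0u=K_0v=0$ for the two directions produced by that item); your case-by-case exclusion of $B_2$, $Z_2$ and $B_1$ summands of $K$ is valid, and your treatment of the $B_1(\gamma,\delta)$ subcase via item 2 of Lemma \ref{main_rest} is a nice observation.

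The gap is in Part 2. The tool you name there --- Lemma \ref{main_rest} applied to the opposite triple $(A^{\rm op},M,\vp^{\rm op})$ with the witness $f_2$ satisfying $\dim_\kk f_2A_1=1$ --- does not eliminate either of the two remaining cases. Its conclusion is only the existence of linearly independent $u,v$ with $l_u(1)\le 1$ and $l_v(1)\le 2$, and this conclusion \emph{is} satisfied when ${}_AM\cong B_1'(\gamma,\delta)\oplus B_1'(\gamma',\delta')\oplus(Z_1')^k$ (there $l_u(1)=1$ at the two directions $\gamma x-\delta y$ and $\gamma'x-\delta'y$) and also when ${}_AM\cong B_1'(\gamma,\delta)\oplus Z_2'\oplus(Z_1')^k$ (there $l_{\gamma x-\delta y}(1)=1$ and $l_v(1)=2$ for any other $v$). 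Likewise the ``dual of the argument from Part 1'' needs $l_u(M)\ge 2$ for every nonzero $u$ as its input, which fails in both cases. So no incompatibility arises from what you propose, and the two unwanted left structures survive.

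What actually kills them is item 3 of Lemma \ref{main_rest}, applied on the \emph{right}-module side but fed by the hypothesized \emph{left} structure: in both remaining cases every indecomposable left summand ($B_1'$, $Z_2'$, $Z_1'$) is generated by elements $Y$ with $\dim_\kk A_1Y\le 1$, so $M_0$ is spanned by such elements, and item 3 then produces two linearly independent $u,v$ with $r_u(1),r_v(1)\le 1$. This contradicts $r_w(1)\ge\dim_\kk\big(B_2(\alpha,\beta)_0w\big)=2$ for every $w$ not proportional to $\alpha x-\beta y$, which you already established. You never invoke item 3 of Lemma \ref{main_rest} anywhere in your proposal, and without it (or some genuinely new computation with $\vp$) Part 2 does not close.
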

 \begin{proof} Suppose that $M_A\cong B_2(\alpha,\beta)\oplus K$ for some $A$-module $K$. Let us first consider the case where $\dim_\kk A_1f\not=1$ for any
 $f\in M_0$. In this case indecomposable direct summands of ${}_AM$ are isomorphic to $Z_1'$ and $W_n'$ ($n\ge 1$). Which is impossible by Corollary \ref{first_coro} and Lemmas \ref{W1}, and \ref{W2}.
 
 Suppose now that $\dim_\kk A_1f=1$ for some $f\in M_0$. Then by Lemma \ref{main_rest} there are $u,v\in A_1$ such that $r_u(1)\le 1$ and $r_v(1)\le 2$.
It is easy to see that $\dim_\kk \big(B_2(\alpha,\beta)_0(\alpha x-\beta y)\big)=1$ and $\dim_\kk \big(B_2(\alpha,\beta)_0w\big)=2$ for any $w\in A_1$ linearly independent with $\alpha x-\beta y$. Since
$r_w(1)=\dim_\kk \big(B_2(\alpha,\beta)_0w\big)+\dim_\kk(K_0w)$
for any $w\in A_1$, we have $r_{\alpha x-\beta y}(1)\ge 1$ and $r_w(1)\ge 2$ for  any $w\in A_1$ linearly independent with $\alpha x-\beta y$. Thus, we may assume that $u=\alpha x-\beta y$ and $K_0u=K_0v=0$, i.e. $K_1=K_0A_1=0$ and $K$ is concentrated in degree zero. Moreover, if $B_2'(\alpha',\beta')\nds_{A^{\rm op}}M$ for any $(\alpha',\beta')\in\mathbb{P}^1$, then ${}_AM$ is a direct sum of modules of the form $Z_1'$, $Z_2'$, and $B_1'(\alpha',\beta')$ ($(\alpha',\beta')\in\mathbb{P}^1$). It is easy to see that in this case $M_0$ is generated by such elements $f\in M_0$ that $\dim_\kk A_1f\le 1$ that contradicts to the fact that $r_w(1)\ge 2$ for  any $w\in A_1$ linearly independent with $\alpha x-\beta y$.
 \end{proof}
 
  \begin{lemma}\label{B1} In the settings of Lemma \ref{main_rest}, $N\nds_AM$ for
  $$N\in\{B_1(\alpha,\beta)\oplus Z_2,B_1(\alpha,\beta)\oplus B_1(\alpha,\beta)\}_{(\alpha,\beta)\in\mathbb{P}^1}.$$
 \end{lemma}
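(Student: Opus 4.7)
The plan is to suppose $M_A\cong N\oplus K$ for each of the two types of direct summand $N$ in the statement, and to derive a contradiction by combining parts (2) and (3) of Lemma \ref{main_rest} applied with $t=1$. Set $u_0=\alpha x-\beta y\in A_1$. My first step is to compute $\dim_\kk(N_0 w)$ for $w\in A_1$: using the relation $f(\alpha x-\beta y)=0$ in $B_1(\alpha,\beta)$ and the relations $f_1 y=0$, $f_1 x=f_2 y$, $f_2 x=0$ in $Z_2$, one finds $\dim_\kk\big(B_1(\alpha,\beta)_0 w\big)=0$ on $\kk u_0$ and $1$ off, while $\dim_\kk\big((Z_2)_0 w\big)=1$ for every nonzero $w$. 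Consequently, in both cases $r_w(1)=\dim_\kk(N_0 w)+\dim_\kk(K_0 w)\ge 2$ whenever $w\notin\kk u_0$.

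The second step uses the fact that any generator of a $B_1(\alpha,\beta)$- or $Z_2$-summand of $M_0$ satisfies $\dim_\kk A_1 f=1$, so Lemma \ref{main_rest}(2) provides linearly independent $u,v\in A_1$ with $r_u(1)\le 1$ and $r_v(1)\le 2$. The bound $r_u(1)\le 1$ forces $u\in\kk u_0$ (otherwise $r_u(1)\ge 2$), while $r_v(1)\le 2$ with $v\notin\kk u_0$ forces $K_0 v=0$. In particular, for every $k\in K_0$ the relation $kv=0$ makes $kx$ and $ky$ linearly dependent, so $\dim_\kk A_1 k\le 1$.

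For the third step I observe that $M_0$ is spanned by elements on which $A_1$ acts with dimension at most $1$: every nonzero element of $N_0$ qualifies by the relations in $B_1(\alpha,\beta)$ and $Z_2$, and every element of $K_0$ qualifies by the previous step. Lemma \ref{main_rest}(3) now yields linearly independent $u',v'\in A_1$ with $r_{u'}(1),r_{v'}(1)\le 1$. But $r_w(1)\ge 2$ for $w\notin\kk u_0$, so $u',v'\in\kk u_0$ --- a contradiction with linear independence, since $\kk u_0$ is one-dimensional.

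The main technical subtlety is that Lemma \ref{main_rest}(3) requires $M_0$ to be \emph{generated} by elements of small annihilator-type, not that every element of $M_0$ have small annihilator. Indeed, a mixed vector $n+k$ with $A_1 n$ and $A_1 k$ transverse one-dimensional subspaces of $M_1$ may well satisfy $\dim_\kk A_1(n+k)=2$; this is allowed, and the spanning set consisting of the natural generators of $N_0$ together with a basis of $K_0$ is what makes the argument go through.
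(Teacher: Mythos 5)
Your first step is correct and is exactly the paper's starting point: from $M_A\cong N\oplus K$ one gets $r_w(1)=\dim_\kk(N_0w)+\dim_\kk(K_0w)\ge 2$ for every $w\in A_1$ linearly independent with $\alpha x-\beta y$. After that, however, the argument breaks because of a systematic confusion between the left and the right action. Items 2 and 3 of Lemma \ref{main_rest} bound the \emph{right} multiplication numbers $r_u(1)=\dim_\kk(M_0u)$ under hypotheses on the \emph{left} action, i.e.\ on $\dim_\kk (A_1Y)$. The assumption $N\ds_AM$ only gives the right module structure of $M$; the relations $f(\alpha x-\beta y)=0$, $f_1y=0$, $f_1x=f_2y$, $f_2x=0$ that you invoke are right-module relations and yield $\dim_\kk(fA_1)\le 1$, not $\dim_\kk(A_1f)\le 1$. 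Likewise $kv=0$ yields $\dim_\kk(kA_1)\le 1$, not $\dim_\kk(A_1k)\le 1$. So neither the hypothesis of item 2 (some $Y$ with $\dim_\kk A_1Y=1$) nor that of item 3 ($M_0$ generated by elements $Y$ with $\dim_\kk A_1Y\le 1$) is actually established. If you repair the sides by invoking the opposite-algebra version of the lemma, its conclusion becomes $l_{u'}(1),l_{v'}(1)\le 1$, which does not contradict $r_w(1)\ge 2$; either way the contradiction evaporates.

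The paper instead runs item 3 in the contrapositive: since $r_w(1)\ge 2$ for all $w$ off the line $\kk(\alpha x-\beta y)$, there cannot exist two linearly independent $u,v$ with $r_u(1),r_v(1)\le 1$, hence $M_0$ is \emph{not} generated by elements $Y$ with $\dim_\kk A_1Y\le 1$. That is a statement about the left structure ${}_AM$: it forces ${}_AM$ to contain an indecomposable summand of type $W_n'$, $Z_{n+2}'$ or $B_{n+1}'(\alpha',\beta')$ with $n\ge 1$, and these are all excluded by Corollary \ref{first_coro} and Lemmas \ref{W1}, \ref{Z}, \ref{W2} and \ref{B2} in their left-module versions. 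Your proposal never appeals to any of these earlier exclusion results, which is a symptom of the real gap: the engine of the argument is the interplay between the two one-sided structures of $M$, and a proof that uses only the given right-module decomposition cannot close.
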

 \begin{proof} Suppose that $N\ds_AM$, where $N\cong B_1(\alpha,\beta)\oplus Z_2$ or $N\cong B_1(\alpha,\beta)\oplus B_1(\alpha,\beta)$ for some $(\alpha,\beta)\in\mathbb{P}^1$. Then our standard argument (see, for example, the proof of Lemma \ref{B2}) shows that $r_w(1)\ge 2$ for any $w$ linearly independent with $\alpha x-\beta y$. Then $M_0$ cannot be generated by elements $f\in M_0$ such that $\dim_\kk A_1f\le 1$ due to item 3 of Lemma \ref{main_rest}. Thus, ${}_AM$ has to contain a direct summand isomorphic to one of the modules $W_n$, $Z_{n+2}$ or $B_{n+1}(\alpha',\beta')$ ($n\ge 1$, $(\alpha',\beta')\in\mathbb{P}^1$). Corollary \ref{first_coro} and Lemmas \ref{W1}, \ref{Z}, \ref{W2}, and \ref{B2} imply that we get a contradiction, and hence $N\nds_AM$.
 \end{proof}
 
 \begin{coro}\label{second_coro} In the settings \eqref{settings}, $M_A\cong N\oplus K$, where $K$ is concentrated in degree zero and $N$ belongs to the set
 $$
\{0,Z_2,B_1(\alpha,\beta),B_2(\alpha,\beta),B_1(\alpha,\beta)\oplus B_1(\gamma,\delta)\}_{(\alpha,\beta),(\gamma,\delta)\in\mathbb{P}^1,(\alpha,\beta)\not=(\gamma,\delta)}.
 $$
 In particular, $\dim_\kk M_1\le 2$.
 \end{coro}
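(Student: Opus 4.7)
The plan is to enumerate the possible right $A$-module structures of $M$ by systematically combining all the restrictions accumulated in the preceding lemmas. From the classification of indecomposable linearly presented $A$-modules recalled in Subsection \ref{ind_sec}, every indecomposable summand of $M_A$ is isomorphic to some $Z_n$, $W_n$, or $B_n(\alpha,\beta)$. Corollary \ref{first_coro} bounds the index: only $Z_1=\kk$ (in degree zero), $Z_2$, $W_1$, $W_2$, $B_1(\alpha,\beta)$ and $B_2(\alpha,\beta)$ may occur. All $Z_1=\kk$ summands are absorbed into $K$, and one must show that the positive-degree part $N$ belongs to the listed set.

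First I would dispatch the easy exclusions. Lemma \ref{W1} eliminates any $W_1$ summand, since in the settings \eqref{settings} we have $A\not\cong A^7(0)$, and Lemma \ref{W2} eliminates $W_2$. Lemma \ref{Z} forbids $Z_2\oplus Z_2$, so at most one copy of $Z_2$ can occur. Lemma \ref{B1} rules out the coexistence of $Z_2$ with any $B_1(\alpha,\beta)$ and the appearance of two identical $B_1$ summands, so whenever several $B_1$'s appear they are pairwise distinct in $\mathbb{P}^1$ and no $Z_2$ is present. Finally, Lemma \ref{B2} guarantees that a $B_2(\alpha,\beta)$ summand is compatible only with a degree-zero complement; in particular, a $B_2$ summand cannot coexist with $Z_2$, with any $B_1(\gamma,\delta)$, or with a second $B_2$.

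The main obstacle that remains is to rule out three or more distinct $B_1(\alpha_i,\beta_i)$ summands. In that hypothetical case the previous exclusions force
$$M_A\cong\bigoplus_{i=1}^{k}B_1(\alpha_i,\beta_i)\oplus K,\qquad k\ge 3,$$
with $K$ concentrated in degree zero. Since $K_0A_1=K_1=0$ and each generator $f_i$ of the $i$-th summand satisfies $\dim_\kk f_iA_1=1$, the set $\{Y\in M_0\mid \dim_\kk YA_1\le 1\}$ contains both $\{f_1,\dots,f_k\}$ and $K_0$, hence generates $M_0$. Applying the left--right swapped version of item 3 of Lemma \ref{main_rest} to the $s$-homogeneous triple $(A^{\rm op},M,\vp)$ yields linearly independent $u,v\in A_1$ with $l_u(1),l_v(1)\le 1$. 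Since $u,v$ span $A_1$, one has $M_1=uM_0+vM_0$, whence $\dim_\kk M_1\le l_u(1)+l_v(1)\le 2$; this contradicts $\dim_\kk M_1=k\ge 3$.

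Putting everything together, $N$ must lie in the stated list. The final assertion $\dim_\kk M_1\le 2$ then follows by direct inspection: $Z_2$ and each $B_1(\alpha,\beta)$ contribute exactly one dimension to $M_1$, while $B_2(\alpha,\beta)$ and the sum of two distinct $B_1$'s each contribute exactly two.
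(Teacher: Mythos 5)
Your proof is correct and follows essentially the same route as the paper's: the same chain of lemmas is combined in the same order, and your exclusion of three or more pairwise distinct $B_1$ summands via the left--right swapped version of item 3 of Lemma \ref{main_rest} (forcing $\dim_\kk M_1\le 2$) is exactly the paper's closing step. One small citation slip: Corollary \ref{first_coro} only rules out $Z_{n+1}$ for $n\ge 3$, i.e.\ $Z_4$ and higher, so the summand $Z_3$ is not eliminated by it as your enumeration suggests; it is eliminated by Lemma \ref{Z}, which you invoke only for $Z_2\oplus Z_2$.
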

 \begin{proof} Let us represent $M_A$ in the form $M_A\cong N\oplus K$, as in the statement,  with $N$ without direct summands concentrated in degree zero.
We have  $W_i\nds_AM$ for any $i\ge 1$ by Corollary \ref{first_coro} and Lemmas \ref{W1} and \ref{W2}.
 If $B_2(\alpha,\beta)\ds_AM$ for some $(\alpha,\beta)\in\mathbb{P}^1$, then the required assertion follows from Lemma \ref{B2}.
 In the remaining case it follows from Corollary \ref{first_coro} and Lemmas \ref{Z} and \ref{B1} that if $N\not\cong Z_2$, then indecomposable direct summands of $N$ are isomorphic to $B_1(\alpha,\beta)$ ($(\alpha,\beta)\in\mathbb{P}^1$) and that all of them are pairwise nonisomorphic.
 It remains to show that $\dim_\kk M_1\le 2$ in this case. This follows from item 3 of Lemma \ref{main_rest} and the fact that elements $f\in M_0$ with $\dim_\kk fA_1\le 1$ generate $M_0$.
 \end{proof}

\subsection{Combining one side structures}

Corollary \ref{second_coro} gives  strong restrictions on possible one side $A$-module structures of $M$. Now we are going to combine possible right structures with possible left structures. Here, for each right $A$-module structure on $M$, we will determine the possible left $A$-module structures of $M$ and then the left and right $A$-module structures on $M^{\ot_Ak}$ for any $k>1$.  Thus, in view of our Table of decompositions for $J(A)$, we will be able to recover the algebra $A$ from the left and right $A$-module structures of $M$. In particular, we will exclude some of the algebras from the list of candidates for the first position of an $s$-homogeneous triple.

Let us first prove a lemma that will be useful in our discussion.

\begin{lemma}\label{ker_vp}
Suppose that $M_0u\not=0$ for any nonzero $u\in A_1$ in the settings \eqref{settings}. Then $\vp\big(M_0^{\ot (k-1)}\ot r\Ann_{M_0}(A_1)\ot M_0^{\ot (s-k)}\big)=0$ for any $1\le k\le s$.
\end{lemma}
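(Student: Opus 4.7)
The plan is to exploit the associativity condition from item 2 of Definition \ref{str}, namely $\vp\ot_A 1_M = 1_M\ot_A \vp$ as graded $A$-bimodule maps $M^{\ot_A^{s+1}}\to M[1]$, together with the standing hypothesis that $M_0u\neq 0$ for every nonzero $u\in A_1$. I would argue by downward induction on $k$, from $k=s$ down to $k=1$, and reduce to pure tensors by linearity.

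For the base case $k=s$, I take $X_1\ot Y\in M_0^{\ot(s-1)}\ot r\Ann_{M_0}(A_1)$ and, for arbitrary $Z\in M_0$, compare the two expressions for the $(s+1)$-fold map at $Z\ot X_1\ot Y$. The $\vp\ot_A 1_M$ side equals $\vp(Z\ot X_1)\cdot Y$, which vanishes because $\vp(Z\ot X_1)\in A_1$ and $A_1 Y=0$ by the choice of $Y$. The $1_M\ot_A\vp$ side is $Z\cdot\vp(X_1\ot Y)$. Letting $Z$ range over $M_0$ gives $M_0\cdot u=0$ for $u:=\vp(X_1\ot Y)\in A_1$, and the standing hypothesis forces $u=0$.

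For the inductive step, assume the claim at level $k+1$ and take $X_1\in M_0^{\ot(k-1)}$, $Y\in r\Ann_{M_0}(A_1)$, $X_2\in M_0^{\ot(s-k)}$. Using $s-k\ge 1$, split $X_2=X_2'\ot W$ with $X_2'\in M_0^{\ot(s-k-1)}$ and $W\in M_0$. For each $Z\in M_0$, associativity applied to $Z\ot X_1\ot Y\ot X_2'\ot W$ equates $\vp(Z\ot X_1\ot Y\ot X_2')\cdot W$ with $Z\cdot\vp(X_1\ot Y\ot X_2)$. In the former, $Y$ occupies position $k+1$ of an $s$-tensor, so the inductive hypothesis at level $k+1$ kills the first factor; equality then gives $Z\cdot\vp(X_1\ot Y\ot X_2)=0$, and varying $Z$ together with the hypothesis on $M_0$ yields $\vp(X_1\ot Y\ot X_2)=0$.

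I do not foresee a genuine obstacle here; the only care needed is tracking the position of $Y$ in the $(s+1)$-tensor so that the inductive hypothesis is invoked at the correct level. The relative cleanness of the argument reflects how rigid the associativity constraint on $\vp$ is once the non-annihilation hypothesis $M_0u\neq 0$ is in place.
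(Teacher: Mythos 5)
Your proof is correct and follows essentially the same route as the paper: a descending induction on $k$, where at each step one prepends an arbitrary $Z\in M_0$, applies the associativity identity $\vp\ot_A 1_M=1_M\ot_A\vp$ to kill the left-hand side (via $A_1Y=0$ at the base, via the inductive hypothesis at level $k+1$ in the step), and then invokes the hypothesis $M_0u\not=0$ to conclude $u=0$. No gaps.
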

\begin{proof} We will prove the required assertion using descending induction by $k$. Suppose that $\vp(X\ot f)=u\not=0$ for some $X\in M_0^{\ot (s-1)}$ and $f\in r\Ann_{M_0}(A_1)$. Then we have $gu=g\vp(X\ot f)=\vp(g\ot X)f=0$ for any $g\in M_0$, i.e. $M_0u=0$. The obtained contradiction shows that the required assertion is valid for $k=s$.

Suppose that $1\le k<s$ and we have already proved that
$$\vp\big(M_0^{\ot k}\ot r\Ann_{M_0}(A_1)\ot M_0^{\ot (s-k-1)}\big)=0.$$
Suppose that $\vp(X\ot f\ot Y\ot h)=u\not=0$ for some $X\in M_0^{\ot (k-1)}$, $f\in r\Ann_{M_0}(A_1)$, $Y\in M_0^{\ot (s-k-1)}$, and $h\in M_0$. Then we have
$$gu=g\vp(X\ot f\ot Y\ot h)=\vp(g\ot X\ot f\ot Y)h=0$$
for any $g\in M_0$ by the induction hypothesis, i.e. $M_0u=0$. The obtained contradiction shows that the required assertion is valid for any $1\le k\le s$.
\end{proof}

\begin{lemma}\label{B2_ts} Suppose that in the settings \eqref{settings} one has $M_A\cong B_2(\alpha,\beta)\oplus Z_1^m$ for some $(\alpha,\beta)\in\mathbb{P}^1$ and $m\ge 0$. Then $(M^{\ot_Ak})_A\cong B_2(\alpha,\beta)\oplus Z_1^{(m+2)^k-2}$ and ${}_A(M^{\ot_Ak})\cong B_2'(\alpha,\beta)\oplus (Z_1')^{(m+2)^k-2}$ for any $k\ge 1$.
\end{lemma}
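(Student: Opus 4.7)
The proof proceeds by induction on $k$, with the real content concentrated in the base case $k = 1$; the inductive step is essentially a bookkeeping computation.

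For $k = 1$, the right-module structure is given by hypothesis. Applying Lemma \ref{B2} to the opposite triple $(A^{\rm op}, M, \vp)$ produces $B_2'(\alpha', \beta') \ds_{A^{\rm op}} M$ with complement concentrated in degree zero for some $(\alpha', \beta') \in \mathbb{P}^1$. Combining this with the left-module analog of Corollary \ref{second_coro} and the dimension count $\dim_\kk M_0 = m + 2$ forces ${}_A M \cong B_2'(\alpha', \beta') \oplus (Z_1')^m$; the substantive step is then to show $(\alpha', \beta') = (\alpha, \beta)$. The hypothesis on $M_A$ gives $M_0 u \neq 0$ for every nonzero $u \in A_1$, and the obtained form of ${}_A M$ symmetrically gives $u M_0 \neq 0$, so Lemma \ref{ker_vp} and its left--right dual both apply. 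Hence $\vp$ factors through $\tilde M_0^{\ot s}$, where $\tilde M_0 = M_0/\bigl(l\Ann_{M_0}(A_1) + r\Ann_{M_0}(A_1)\bigr)$; since $\vp|_{M_0^{\ot s}}$ surjects onto $A_1$ (dimension $2$), we need $\dim_\kk \tilde M_0 \geq 2$, and because both annihilators are $m$-dimensional this forces them to coincide.

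Writing $M_0 = V \oplus Z$ with $Z$ the common annihilator and $V$ a $2$-dimensional complement, both $f_2 \in V$ (with $f_2(\alpha x - \beta y) = 0$) and $g_2' \in V$ (with $(\alpha' x - \beta' y) g_2' = 0$) live in $V$. Using the bimodule associativity $(\alpha' x - \beta' y) \cdot v \cdot (\alpha x - \beta y) = 0$ for $v \in V$, together with an analysis of the two $1$-dimensional subspaces $V(\alpha x - \beta y)$ and $(\alpha' x - \beta' y) V$ of the $2$-dimensional space $M_1$, the two projective parameters are forced to agree.

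For the inductive step, the right-module decomposition $M_A \cong B_2(\alpha, \beta) \oplus Z_1^m$ passes through the first factor of $M \ot_A M^{\ot_A k}$, giving
\[
(M^{\ot_A (k+1)})_A \cong (B_2(\alpha, \beta) \ot_A M^{\ot_A k})_A \oplus (Z_1^m \ot_A M^{\ot_A k})_A
\]
as right $A$-modules. The second summand equals $Z_1^{m(m+2)^k}$ because $Z_1 \ot_A N \cong N/(A_{>0} N) \cong N_0$ is concentrated in degree zero with trivial right action. For the first summand, use the two-generator presentation of $B_2(\alpha, \beta)$: the tensor product becomes $(M^{\ot_A k})^{\oplus 2}$ modulo relations, which simplify using the inductive hypothesis on the left $B_2'(\alpha, \beta)$-structure of $M^{\ot_A k}$ to yield one copy of $B_2(\alpha, \beta)$ plus degree-zero summands. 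Aggregating and matching dimensions gives $(M^{\ot_A (k+1)})_A \cong B_2(\alpha, \beta) \oplus Z_1^{(m+2)^{k+1} - 2}$, and the left statement follows by the symmetric computation. The main obstacle is the identification $(\alpha', \beta') = (\alpha, \beta)$ in the base case, which requires genuinely coupling the a priori independent right and left $B_2$-parameters via the bimodule structure on the quotient $V$.
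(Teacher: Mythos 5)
The overall skeleton of your argument (invoke Lemma \ref{B2} for the left structure, identify the two degree-zero complements via Lemma \ref{ker_vp} and the surjectivity of $\vp|_{M_0^{\ot s}}$, then induct) matches the paper, and the identification $Z_1^m=(Z_1')^m$ is done exactly as in the paper. However, your treatment of the crux --- forcing $(\alpha',\beta')=(\alpha,\beta)$ --- has a genuine gap. You try to derive it from ``bimodule associativity'' on the two-dimensional quotient $V$ alone, but the bimodule structure by itself does not force the two parameters to agree. For instance, over $A=A^0$ (which is allowed by the settings \eqref{settings} and is precisely one of the cases this lemma must handle, since it feeds into Theorem \ref{impos_struc}) one has $N_2=0$ for $N_A\cong B_2(\alpha,\beta)$, so \emph{every} compatibility condition $(uf)v=u(fv)$ is vacuously satisfied and one can endow $N$ with any left structure whatsoever, including ${}_AN\cong B_2'(\alpha',\beta')$ with $(\alpha',\beta')\neq(\alpha,\beta)$. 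Moreover the identity $(\alpha'x-\beta'y)\cdot v\cdot(\alpha x-\beta y)=0$ you invoke is itself unjustified: $v(\alpha x-\beta y)$ is a nonzero element of $N_1$ not obviously proportional to the left socle generator, and $(\alpha'x-\beta'y)N_1\subset N_2$ need not vanish for general $A$. The obstruction to $(\alpha',\beta')\neq(\alpha,\beta)$ genuinely comes from the map $\vp$ and the triple axioms: the paper shows by descending induction that $\vp\left(N_0^{\ot(i-1)}\ot f_2'\ot N_0^{\ot(s-i)}\right)\subset\langle x\rangle$ and $\vp\left(N_0^{\ot(i-1)}\ot f_2\ot N_0^{\ot(s-i)}\right)\subset\langle y\rangle$, and then derives a contradiction with $\Im\vp\supset A_1$ in both the dependent and independent cases for $f_2,f_2'$. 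Your sketch never uses $\vp$ at this step, so it cannot close.

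The inductive step is also thinner than it looks. Writing $M\ot_AM^{\ot_Ak}\cong\left(B_2(\alpha,\beta)\ot_AM^{\ot_Ak}\right)\oplus\left(Z_1^m\ot_AM^{\ot_Ak}\right)$ and computing the second summand is fine, but the first summand $N^2/W$ (with $W$ generated by the images of the two relations of $B_2(\alpha,\beta)$) is a quotient of right modules by a submodule defined through the \emph{left} action on $N=M^{\ot_Ak}$; its right-module isomorphism type therefore depends on the full bimodule structure of $N$, which the inductive hypothesis (one-sided structures only) does not supply. The paper circumvents this by computing $\dim_\kk(M\ot_AM)_1=2$, showing $(M\ot_AM)_0u=(M\ot_AM)_1$ for $u$ independent of $\alpha x-\beta y$, and then using item 3 of Lemma \ref{main_rest} together with the classification of indecomposables to pin down the summand $B_2(\alpha,\beta)\ds_AM\ot_AM$. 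You would need some substitute for that argument to make ``one copy of $B_2(\alpha,\beta)$ plus degree-zero summands'' an actual conclusion rather than an expectation.
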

\begin{proof} We have ${}_AM\cong B_2'(\alpha',\beta')\oplus (Z_1')^m$ for some $(\alpha',\beta')\in\mathbb{P}^1$ by Lemma \ref{B2}. Note that $\vp\left(M_0^{\ot (i-1)}\ot (Z_1')^m\ot M_0^{\ot (s-i)}\right)=\vp\left(M_0^{\ot (i-1)}\ot (Z_1)^m\ot M_0^{\ot (s-i)}\right)=0$ for any $1\le i\le s$ by Lemma \ref{ker_vp}. If $Z_1^m\not=(Z_1')^m$, then the space $Z_1^m+(Z_1')^m\subset M_0$ has codimension less or equal to one, and hence $2=\dim_\kk\Im\vp|_{M_0^{\ot s}}\le 1$. The obtained contradiction shows that $Z_1^m=(Z_1')^m$, and hence $M\cong N\oplus \kk^m$, where $N_A\cong B_2(\alpha,\beta)$ and ${}_AN\cong B_2'(\alpha',\beta')$. Moreover, $\vp$ factors through the canonical projection $M^{\ot_As}\twoheadrightarrow N^{\ot_As}$.

Suppose first that $(\alpha',\beta')\not=(\alpha,\beta)$. Note that after a replacement of $A$ by an isomorphic algebra we may assume that $(\alpha,\beta)=(1,0)$ and $(\alpha',\beta')=(0,1)$.

Let us denote by $f_1$ and $f_2$ the standard generators of $B_2(1,0)$ and by $f_1'$ and $f_2'$ the standard generators of $B_2(0,1)$.
Let us prove that $\vp\left(N_0^{\ot (i-1)}\ot f_2'\ot N_0^{\ot (s-i)}\right)\subset \langle x\rangle$ for any $1\le i\le s$ by descending induction on $i$. Suppose that $\vp(Y\ot f_2')=b_xx+b_yy$ with $b_y\not=0$ for some $Y\in N_0^{\ot (s-1)}$. Let us choose $c_1,c_2\in\kk$ not all zero such that $c_1\vp(f_1\ot Y)+c_2\vp(f_2\ot Y)\in\langle y\rangle$. Then we have
$0=\vp\big((c_1f_1+c_2f_2)\ot Y\big)f_2'=(c_1f_1+c_2f_2)\vp(Y\ot f_2')\not=0$. The obtained contradiction shows that $\vp\left(N_0^{\ot (s-1)}\ot f_2'\right)\subset \langle x\rangle$.
Suppose now that $\vp\left(N_0^{\ot i}\ot f_2'\ot N_0^{\ot (s-i-1)}\right)\subset \langle x\rangle$ for some $1\le i<s$. Let us take arbitrary $Y\in N_0^{\ot (i-1)}$, $Z\in N_0^{\ot (s-i-1)}$ and $h\in N_0$. Let us choose $c_1,c_2\in\kk$ not all zero such that $c_1\vp(f_1\ot Y\ot f_2'\ot Z)+c_2\vp(f_2\ot Y\ot f_2'\ot Z)=0$. It is possible because $\vp(f_1\ot Y\ot f_2'\ot Z),\vp(f_2\ot Y\ot f_2'\ot Z)\in\langle x\rangle$ by induction hypothesis. We have
$$(c_1f_1+c_2f_2)\vp(Y\ot f_2'\ot Z\ot h)=\big(c_1\vp(f_1\ot Y\ot f_2'\ot Z)+c_2\vp(f_2\ot Y\ot f_2'\ot Z)\big)h=0,$$
and hence $\vp(Y\ot f_2'\ot Z\ot h)\in \langle x\rangle$. In analogous way, one can show that $\vp\left(N_0^{\ot (i-1)}\ot f_2\ot N_0^{\ot (s-i)}\right)\subset \langle y\rangle$ for any $1\le i\le s$.
If $f_2$ and $f_2'$ are linearly dependent, then $\vp\left(N_0^{\ot (i-1)}\ot f_2\ot N_0^{\ot (s-i)}\right)=0$ for any $1\le i\le s$, and hence we again get $2=\dim_\kk\Im\vp|_{M_0^{\ot s}}\le 1$. If $f_2$ and $f_2'$ are linearly independent, then
they constitute a basis of $N_0$, and hence we have $\vp(f_2^{\ot s})=cy$ for some $c\in\kk^*$. Then we have $0\not=\frac{1}{c}f_2'y=f_2'\vp(f_2^{\ot s})=\vp(f_2'f_2^{\ot (s-1)})f_2=0$. The obtained contradiction shows that the case $(\alpha',\beta')\not=(\alpha,\beta)$ is impossible.

Now we already have ${}_AM\cong B_2'(\alpha,\beta)\oplus (Z_1')^m$. We can assume again that $(\alpha,\beta)=(1,0)$. Direct calculations show that $\dim_\kk\big(B_2(1,0)\ot_AB_2'(1,0)\big)_1=2$. Note that, for any $u\in A_1$ linearly independent with $x$, $M_0u=M_1$, and hence $(M\ot_AM)_0u=(M\ot_AM)_1$ too. As usually, the analogous assertion holds for ${}_A(M\ot_AM)$.
It follows that that $r_u(2)=2$ for any $u\in A_1$ linearly independent with $x$, and hence $M_0\ot M_0$ is not generated by elements $f\in M_0\ot M_0$ such that $\dim_\kk(A_1f)\le 1$.
Since nonsemisimple direct summands of ${}_A(M\ot_AM)$ can be $B_2'(1,0)$, $B_1'(1,0)$ and $Z_2'$, the last assertion means that $B_2'(1,0)\ds_{A^{\rm op}}M\ot_AM$. Then the dual argument shows that $B_2(1,0)\ds_{A}M\ot_AM$. Thus, we have $(M\ot_AM)_A\cong B_2(1,0)\oplus Z_1^{(m+2)^2-2}$ and ${}_A(M\ot_AM)\cong B_2'(1,0)\oplus (Z_1')^{(m+2)^2-2}$.
In particular, this means that $B_2(1,0)\ot_AM\cong B_2(1,0)\oplus Z_1^{2m+2}$ and $M\ot_AB_2(1,0)'\cong B_2'(1,0)\oplus (Z_1')^{2m+2}$.
Now the required assertion follows by induction on $k$.
\end{proof}

\begin{lemma}\label{B11_ts} Suppose that in the settings \eqref{settings} one has ${}_AM\cong B_1(\alpha,\beta)\oplus B_1(\gamma,\delta)\oplus Z_1^m$ for some $(\alpha,\beta),(\gamma,\delta)\in\mathbb{P}^1$ and $m\ge 0$. Then $(M^{\ot_Ak})_A\cong B_1(\alpha,\beta)\oplus B_1(\gamma,\delta)\oplus Z_1^{(m+2)^k-2}$ and ${}_A(M^{\ot_Ak})\cong B_1'(\alpha,\beta)\oplus B_1'(\gamma,\delta)\oplus (Z_1')^{(m+2)^k-2}$ for any $k\ge 1$.
\end{lemma}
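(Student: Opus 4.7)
The proof parallels that of Lemma \ref{B2_ts}. First, to determine $M_A$, apply Corollary \ref{second_coro}: $M_A\cong N\oplus K$ with $K$ concentrated in degree $0$ and $N$ among the five listed options. The hypothesis gives $\dim_\kk M_1=2$, which rules out $N\in\{0,Z_2,B_1(\alpha',\beta')\}$ by dimension, and the case $N\cong B_2(\alpha',\beta')$ is excluded by Lemma \ref{B2_ts} itself (which would force $B_2'(\cdot,\cdot)\ds_{A^{\rm op}}M$, contradicting the hypothesis). Hence $M_A\cong B_1(\alpha'',\beta'')\oplus B_1(\gamma'',\delta'')\oplus Z_1^{m'}$ with $(\alpha'',\beta'')\ne(\gamma'',\delta'')$.

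Next, I would identify the semisimple parts. One checks that $r\Ann_{M_0}(A_1)=Z_1^{m'}$ and $l\Ann_{M_0}(A_1)=(Z_1')^m$, since the generator of any $B_1$ (resp.\ $B_1'$) summand fails to be annihilated on the appropriate side. Both subspaces have codimension $2$ in $M_0$, so $m=m'$. By Lemma \ref{ker_vp} and its opposite version, $\vp$ vanishes on any element of $M_0^{\ot s}$ whose factors include an element of $Z_1^m+(Z_1')^m$, so $\vp|_{M_0^{\ot s}}$ factors through $\bigl(M_0/(Z_1^m+(Z_1')^m)\bigr)^{\ot s}$. Since $\Im\vp=A_1$ has dimension $2$, we must have $\dim_\kk M_0/(Z_1^m+(Z_1')^m)\ge 2$, which forces $Z_1^m=(Z_1')^m$. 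Thus $M\cong N\oplus\kk^m$ as bimodules, where $N$ carries both $B_1$-type one-sided decompositions, and $\vp$ factors through $N^{\ot_A s}$.

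Third, I would match the parameters in the spirit of the $(\alpha',\beta')\ne(\alpha,\beta)$ analysis in the proof of Lemma \ref{B2_ts}. Choose bases $\{\tilde f_1,\tilde f_2\}$ and $\{f_1,f_2\}$ of $N_0$ realizing the left and right decompositions, so that $(\alpha x-\beta y)\tilde f_1=(\gamma x-\delta y)\tilde f_2=0$ and $f_1(\alpha''x-\beta''y)=f_2(\gamma''x-\delta''y)=0$. Assuming for contradiction $\{(\alpha,\beta),(\gamma,\delta)\}\ne\{(\alpha'',\beta''),(\gamma'',\delta'')\}$, repeated application of the bimodule compatibility $h\vp(X\ot Y)=\vp(h\ot X)Y$ — with $h$ drawn from the $\tilde f_i$ and $Y$-factors built out of the $f_j$ (or vice versa) — exploits the one-sided annihilator conditions to confine $\vp(N_0^{\ot s})$ to a proper subspace of $A_1$. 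This contradicts $\Im\vp=A_1$, so after relabeling we get $(\alpha'',\beta'')=(\alpha,\beta)$, $(\gamma'',\delta'')=(\gamma,\delta)$, proving the case $k=1$.

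Finally, the general statement follows by induction on $k$. Writing $M^{\ot_A(k+1)}=M^{\ot_A k}\ot_A M$ and using $M\cong N\oplus\kk^m$, every cross-term involving a $\kk^m$ factor is concentrated in degree $0$, while the non-semisimple part of $N\ot_A N$ can be read off from the explicit bimodule structure and must once again be $B_1(\alpha,\beta)\oplus B_1(\gamma,\delta)$ by Steps 1--3 applied to $M^{\ot_A 2}$; the induction then propagates the shape at every step. A dimension count gives $(m+2)^k-2$ copies of $Z_1$, and the left-module statement is analogous. The principal obstacle is Step 3: unlike Lemma \ref{B2_ts}, where only one mismatched pair had to be excluded, here two parameter pairs on each side produce several sub-cases of mismatch, each of which requires its own annihilator-based calculation to rule out.
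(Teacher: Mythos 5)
Your plan follows the paper's proof: derive the decomposition on the other side from Corollary \ref{second_coro} and Lemma \ref{B2}, match the two parameter pairs using $\vp$, and induct on $k$ through $M^{\ot_A(k+1)}=M^{\ot_Ak}\ot_AM$. Your Step 3 is sound and in fact requires no case analysis: by the computation in the proof of Lemma \ref{main_rest}, any $u=\vp(f\ot X\ot f')$ with $\dim_\kk(fA_1)\le1$ and $\dim_\kk(A_1f')\le1$ satisfies $l_u(1)\le1$ and $r_u(1)\le1$, so $\langle u\rangle$ lies in the intersection of the two two-element sets of lines attached to the left and right decompositions; since such tensors span $M_0^{\ot s}$ and $\Im\vp|_{M_0^{\ot s}}=A_1$ is two-dimensional, two linearly independent such $u,v$ exist, forcing the two sets of lines to coincide. (Your Step 2, the identification $Z_1^m=(Z_1')^m$, is correct but is not needed for the conclusion; the paper omits it here, and $\kk\ot_AM\cong\kk^{\dim_\kk M_0}$ already handles the semisimple summands in the induction.)

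The genuine gap is in Step 4. You cannot invoke ``Steps 1--3 applied to $M^{\ot_A2}$'': Corollary \ref{second_coro} is a statement about the second component of a triple, not about its tensor powers. More seriously, the bimodule structure of $N$ is \emph{not} determined by its two one-sided structures --- the coefficients expressing $x\tilde f_2$ and $y\tilde f_1$ in the basis $f_1y,f_2x$ of $N_1$ are unconstrained at this stage (compare Lemma \ref{bimod_B1B1}, where $\vp$ itself is needed to pin down such a coefficient) --- so the non-semisimple part of $N\ot_AN$ cannot simply be ``read off''; depending on those coefficients, the non-semisimple summand of $B_1(1,0)\ot_AN$ could a priori be $Z_2$ or a $B_1(a,b)$ with $ab\not=0$. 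The paper's route avoids this: the graded vector-space computations $\dim_\kk\big(B_1(1,0)\ot_AB_1'(1,0)\big)_1=1$ and $\big(B_1(1,0)\ot_AB_1'(0,1)\big)_1=0$ show that $(M\ot_AM)_A$ has exactly two non-semisimple indecomposable summands, each with one-dimensional degree-one part; the surjectivity of right multiplication by $ax+by$ ($a,b\in\kk^*$) on $(M\ot_AM)_0$ rules out $B_1(a,b)$ with $ab\not=0$, and item 3 of Lemma \ref{main_rest} applied with $t=2$ rules out $Z_2$ and a repeated $B_1$. Substituting this argument for your Step 4 closes the induction.
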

\begin{proof} We have ${}_AM\cong B_1'(\alpha',\beta')\oplus B_1'(\gamma',\delta')\oplus (Z_1')^m$ for some $(\alpha',\beta'),(\gamma',\delta')\in\mathbb{P}^1$ by Lemma \ref{B2} and Corollary \ref{second_coro}. Note that $(\alpha,\beta)\not=(\gamma,\delta)$ and $(\alpha',\beta')\not=(\gamma',\delta')$ by Lemma \ref{B1}.

Note that $M_0$ is generated by elements $f'\in M_0$ such that $\dim_\kk(A_1f')\le 1$ and by elements $f\in M_0$ such that $\dim_\kk(fA_1)\le 1$.
Let us choose among the elements of the form $\vp(f\ot X\ot f')\in A_1$ ($f,f'\in M_0$, $\dim_\kk(fA_1),\dim_\kk(A_1f')\le 1$, $X\in M_0^{\ot (s-2)}$) two linearly independent elements $u$ and $v$.
Following the proof of Lemma \ref{main_rest}, we obtain that $r_u(1)\le 1$, and hence $u\in\langle\alpha x-\beta y\rangle$ or $u\in\langle\gamma x-\delta y\rangle$. Analogous inclusion for $v$ and the dual argument show that 
$$\{\langle\alpha' x-\beta' y\rangle,\langle\gamma' x-\delta' y\rangle\}=\{\langle u\rangle,\langle v\rangle\}=\{\langle\alpha x-\beta y\rangle,\langle\gamma x-\delta y\rangle\},$$
and hence $\{(\alpha',\beta'),(\gamma',\delta')\}=\{(\alpha,\beta),(\gamma,\delta)\}$.

Now we already have ${}_AM\cong B_1'(\alpha,\beta)\oplus B_1'(\gamma,\delta)\oplus (Z_1')^m$. After a replacement of $A$ by an isomorphic algebra we may assume that $(\alpha,\beta)=(1,0)$ and $(\gamma,\delta)=(0,1)$. Now it is easy to see that
\begin{multline*}
\dim_\kk\big(B_1(1,0)\ot_AB_1'(1,0)\big)_1=\dim_\kk\big(B_1(0,1)\ot_AB_1'(0,1)\big)_1=1\mbox{ and}\\
\big(B_1(1,0)\ot_AB_1'(0,1)\big)_1=\big(B_1(0,1)\ot_AB_1'(1,0)\big)_1=0.
\end{multline*}
Thus, $\dim_\kk(B_1(1,0)\ot_AM)_1=\dim_\kk(B_1(0,1)\ot_AM)_1=1$, i.e. $(M\ot_A M)_A$ has two indecomposable direct summands with one dimensional first components. Since $M(ax+by)=M$ for any $a,b\in\kk^*$, these direct summands are isomorphic to $B_1(1,0)$, $B_1(0,1)$ and $Z_2$. Analogously ${}_A(M\ot_A M)$ has two indecomposable direct summands isomorphic to $B_1'(1,0)$, $B_1'(0,1)$ or $Z_2'$. Since $M_0\ot M_0$ is generated by such elements $f\in M_0\ot M_0$ that $\dim_\kk A_1f\le 1$, it is easy to check using Lemma \ref{main_rest} that only the case $B_1(1,0)\oplus B_1(0,1)\ds_AM\ot_AM$ is possible, i.e. $(M\ot_AM)_A\cong B_1(1,0)\oplus B_1(0,1)\oplus Z_1^{(m+2)^2-2}$.
In particular, this means that $\big(B_1(1,0)\oplus B_1(0,1)\big)\ot_AM\cong B_1(1,0)\oplus B_1(0,1)\oplus Z_1^{2m+2}$.
Now the required formula for $(M^{\ot_Ak})_A$ follows by induction on $k$. The proof of the formula for ${}_A(M^{\ot_Ak})$ is dual.
\end{proof}

\begin{lemma}\label{Z2_ts} Suppose that in the settings \eqref{settings} one has $M_A\cong Z_2\oplus Z_1^m$ for some $m\ge 0$. Then ${}_AM\cong Z_2'\oplus (Z_1')^m$ and $M^{\ot_Ak}\cong \kk^{(m+2)^k}$ for any $k>1$.
\end{lemma}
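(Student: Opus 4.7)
The plan is to first pin down the left $A$-module structure of $M$, and then bootstrap from $k=1$ to all $k>1$.

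By Corollary \ref{second_coro} applied to $(A^{\rm op}, M, \vp)$, one has ${}_AM \cong N' \oplus (Z_1')^{m'}$ with $N'$ one of $0$, $Z_2'$, $B_1'(\alpha,\beta)$, $B_2'(\alpha,\beta)$, or $B_1'(\alpha,\beta) \oplus B_1'(\gamma,\delta)$. Applying Lemma \ref{B2_ts} to the opposite triple rules out $N' = B_2'(\alpha,\beta)$, since it would force $M_A \cong B_2(\alpha,\beta) \oplus Z_1^{m'}$, contradicting the hypothesis; Lemma \ref{B11_ts} at $k=1$ likewise excludes $N' = B_1'(\alpha,\beta) \oplus B_1'(\gamma,\delta)$. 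The case $N' = 0$ is ruled out by a dimension count: since ${}_AM$ is linearly presented, $M_1 = A_1 M_0 = 0$, yet $\dim M_1 = \dim (Z_2)_1 = 1$.

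To kill the remaining candidate $N' = B_1'(\alpha,\beta)$ I would invoke Lemma \ref{ker_vp}. A direct computation in $Z_2$ shows $(Z_2)_0 u \ne 0$ for every nonzero $u \in A_1$, so $M_0 u \ne 0$ and the hypothesis of Lemma \ref{ker_vp} is met. Therefore $\vp$ vanishes on any pure tensor with a factor in $r\Ann_{M_0}(A_1)$. In the $B_1'$ case this subspace is exactly the semisimple summand $(Z_1')^{m'}_0$, which has codimension one in $M_0$, so $\vp|_{M_0^{\ot s}}$ factors through $\bigl(M_0/r\Ann_{M_0}(A_1)\bigr)^{\ot s}$, a one-dimensional space, yielding $\dim\Im\vp \le 1 < 2 = \dim A_1$, contradicting $\Im\vp = J(A)[1]$. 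Hence $N' = Z_2'$, and matching $\dim M_0 = 2+m = 2+m'$ gives $m' = m$.

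For the second assertion the crucial step is to show $(M \ot_A M)_1 = 0$. Let $f_1, f_2$ (resp.\ $g_1, g_2$) denote the standard generators of $Z_2 \subset M_A$ (resp.\ $Z_2' \subset {}_AM$), so $M_1$ is one-dimensional, spanned simultaneously by $f_1 x = f_2 y$ and by $x g_1 = y g_2$. Since ${}_AM$ is generated in degree zero, every element of $(M \ot_A M)_1$ lies in the image of $M_1 \ot_\kk M_0$, and for any $m \in M_0$ one has $f_1 x \ot m = f_1 \ot x m \in \kk\cdot(f_1 \ot x g_1) = \kk\cdot(f_1 \ot y g_2) = \kk\cdot(f_1 y \ot g_2) = 0$, using $f_1 y = 0$. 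Hence $(M \ot_A M)_1 = 0$, and since $M \ot_A M$ is bimodule-generated in degree zero, this forces $M \ot_A M$ to be the trivial bimodule of dimension $(m+2)^2$. An induction on $k$ then gives $M^{\ot_A k} \cong \kk^{(m+2)^{k-1}} \ot_A M \cong \kk^{(m+2)^{k-1}} \ot_\kk (M/J(A)M) = \kk^{(m+2)^k}$. The delicate point is the degree-one vanishing in $M \ot_A M$: a priori the right-semisimple and left-semisimple $\kk^m$ complements of $M_0$ need not coincide, so one has to organise the computation around the single generator of $M_1$ rather than through any direct sum decomposition of $M_0$.
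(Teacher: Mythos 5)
Your argument is correct and follows essentially the same route as the paper: the $B_1'$ alternative for ${}_AM$ is eliminated via Lemma \ref{ker_vp} and the codimension-one annihilator forcing $\dim\Im\vp|_{M_0^{\ot s}}\le 1$, and the tensor-power claim reduces to the vanishing $(Z_2\ot_AZ_2')_1=0$, which you verify by the same computation the paper leaves implicit. Your extra care in enumerating the cases from Corollary \ref{second_coro} and in organising the degree-one computation around the one-dimensional space $M_1$ is just a fuller write-up of the paper's terse proof, not a different method.
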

\begin{proof} First, suppose that ${}_AM\cong B_1'(\alpha,\beta)\oplus (Z_1')^{m+1}$ for some $(\alpha,\beta)\in\mathbb{P}^1$.
Let us set $K':=r\Ann_{M_0}(A_1)=(Z_1')^{m+1}$. Then we have $\vp(M_0^{\ot (k-1)}\ot K'\ot M_0^{\ot (s-k)})=0$ for any $1\le k\le s$ by Lemma \ref{ker_vp}.
Since $K'$ has codimension one in $M_0$, we get $2=\dim_\kk\Im\vp|_{M_0^{\ot s}}\le 1$.
The obtained contradiction shows that ${}_AM$ does not have a direct summand of the form $B_1'(\alpha,\beta)$, and hence ${}_AM\cong Z_2'\oplus (Z_1')^m$. It is easy to see that $(Z_2\ot_AZ_2')_1=0$, and thus $M^{\ot_Ak}$ is concentrated in degree zero for any $k>1$.
\end{proof}

\begin{lemma}\label{B1_ts} Suppose that in the settings \eqref{settings} one has $M_A\cong B_1(\alpha,\beta)\oplus Z_1^m$ for some $(\alpha,\beta)\in\mathbb{P}^1$ and $m\ge 0$. Then ${}_AM\cong  B_1(\alpha',\beta')\oplus (Z_1')^m$ for some $(\alpha',\beta')\in\mathbb{P}^1$. Moreover,
\begin{enumerate}
\item $M^{\ot_Ak}\cong \kk^{(m+1)^k}$ for any $k>1$ if $(\alpha',\beta')\not=(\alpha,\beta)$;
\item $(M^{\ot_Ak})_A\cong B_1(\alpha,\beta)\oplus Z_1^{(m+1)^k-1}$ and ${}_A(M^{\ot_Ak})\cong B_1'(\alpha,\beta)\oplus (Z_1')^{(m+1)^k-1}$ for any $k>1$ if $(\alpha',\beta')=(\alpha,\beta)$.
\end{enumerate}
\end{lemma}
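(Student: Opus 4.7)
The plan is to follow the strategy of Lemmas \ref{B2_ts}, \ref{B11_ts}, and \ref{Z2_ts}: first pin down the left $A$-module structure of $M$, then compute the tensor powers degree by degree. Applying Corollary \ref{second_coro} to the opposite algebra, I can write ${}_AM \cong N' \oplus (Z_1')^{m'}$ where $N'$ belongs to the list $\{0,\, Z_2',\, B_1'(\alpha',\beta'),\, B_2'(\alpha',\beta'),\, B_1'(\alpha',\beta')\oplus B_1'(\gamma',\delta')\}$ (the last one with distinct parameters). The options $N'=0$, $N'=Z_2'$, $N'=B_2'(\cdot,\cdot)$, and $N'=B_1'\oplus B_1'$ are excluded respectively by $\dim_\kk M_1=1$, by Lemma \ref{Z2_ts}, by Lemma \ref{B2_ts}, and by Lemma \ref{B11_ts}, since each (applied to the opposite triple) would force a right structure on $M$ incompatible with $M_A\cong B_1(\alpha,\beta)\oplus Z_1^m$. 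The equality $m'=m$ follows from comparing $\dim_\kk M_0$.

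In the case $(\alpha',\beta')\neq(\alpha,\beta)$, after an isomorphism of $A$ I may assume $(\alpha,\beta)=(1,0)$ and $(\alpha',\beta')=(0,1)$. Then $M_0x=0$ and $yM_0=0$, while $M_1=M_0y=xM_0$. The defining relations $ma\otimes n=m\otimes an$ of $M\otimes_A M$ specialize in degree one to $my\otimes n=m\otimes yn=0$ (since $yn=0$) and $m\otimes xn=mx\otimes n=0$ (since $mx=0$). Hence $M_1\otimes_A M_0=M_0\otimes_A M_1=0$, so $(M\otimes_A M)_1=0$ and $M\otimes_A M\cong\kk^{(m+1)^2}$ is concentrated in degree zero. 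Since any $A$-bimodule $N$ concentrated in degree zero is killed by $J(A)$ on both sides, one has $M\otimes_A N\cong(M/MJ(A))\otimes_\kk N\cong\kk^{(m+1)\dim_\kk N}$, and iteration gives $M^{\ot_A k}\cong\kk^{(m+1)^k}$ for every $k\geq 2$.

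In the case $(\alpha',\beta')=(\alpha,\beta)$, normalize to $(1,0)$; then $M_0x=0=xM_0$, and this vanishing propagates to every tensor power $M^{\ot_A k}$ via $(N\otimes_A M)x=N\otimes_A(Mx)=0$ and its dual. To compute $\dim_\kk(M\otimes_A M)_1$, consider the linear map
\[
\Psi:M_0\otimes M_0\to M_1\otimes M_0\oplus M_0\otimes M_1,\qquad m\otimes n\mapsto(my\otimes n,\,-m\otimes yn),
\]
whose image is precisely the space of degree-one relations. Writing $my=\lambda(m)\xi$ and $yn=\mu(n)\xi$ for a fixed generator $\xi\in M_1$ and nonzero functionals $\lambda,\mu:M_0\to\kk$, a direct computation splitting a basis of $M_0$ compatibly with $\ker\lambda$ and $\ker\mu$ yields rank of $\Psi$ equal to $2m+1$ whether or not $\lambda,\mu$ are proportional; hence $\dim_\kk(M\otimes_A M)_1=2(m+1)-(2m+1)=1$. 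Since right multiplication by $x$ vanishes on $M\otimes_A M$, and since neither $Z_2$ (where $f_1x\neq 0$) nor $B_1(\alpha'',\beta'')$ with $\beta''\neq 0$ can occur as a summand, the only compatible right structure with $\dim_\kk(M\otimes_A M)_0=(m+1)^2$ and $\dim_\kk(M\otimes_A M)_1=1$ is $B_1(1,0)\oplus Z_1^{(m+1)^2-1}$; the dual argument handles the left structure. Induction on $k$ then closes Case 2: the same $\Psi$-analysis with $M^{\ot_A k}$ in place of one factor gives $\dim_\kk(M^{\ot_A(k+1)})_1=1$, whence the formula.

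The main obstacle is the rank computation of $\Psi$ in Case 2: the functionals $\lambda$ and $\mu$ may or may not be proportional (corresponding to whether the right and left $B_1$-generators of $M$ can be chosen on the same line in $M_0$), and one must verify that both subcases yield the same rank $2m+1$ uniformly for the counting argument to close.
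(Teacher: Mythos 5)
Your proposal is correct and follows essentially the same route as the paper's proof: the left structure is pinned down via Corollary \ref{second_coro} together with Lemma \ref{Z2_ts}, Case 1 reduces to the vanishing of $\big(B_1(\alpha,\beta)\ot_A B_1'(\alpha',\beta')\big)_1$, and Case 2 to $\dim_\kk(M\ot_AM)_1=1$ followed by the two-sided annihilation argument and induction. Your explicit rank computation for $\Psi$ is just an unpacked version of the paper's one-line observation that $\dim_\kk\big(B_1(\alpha,\beta)\ot_AB_1'(\alpha,\beta)\big)_1=1$.
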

\begin{proof} The fact that ${}_AM$ has the form stated in the lemma follows from Lemma \ref{Z2_ts}. Now the first part follows from the fact that $\big(B_1(\alpha,\beta)\ot_AB_1'(\alpha',\beta')\big)_1=0$ for  $(\alpha',\beta')\not=(\alpha,\beta)$.

Finally, let us consider the case $(\alpha',\beta')=(\alpha,\beta)$. Since
$\dim_\kk\big(B_1(\alpha,\beta)\ot_AB_1'(\alpha,\beta)\big)_1=1$, we have $\dim_\kk(M\ot_AM)_1=1$. Since $(M\ot_AM)_0(\alpha x-\beta y)=(\alpha x-\beta y)(M\ot_AM)_0=0$, it is easy to see that the required assertion is true for $k=2$. In particular, this means that $B_1(\alpha,\beta)\ot_AM\cong B_1(\alpha,\beta)\oplus Z_1^m$ and $M\ot_AB_1'(\alpha,\beta)\cong B_1'(\alpha,\beta)\oplus (Z_1')^m$. Then the required assertion follows  by induction on $k$.
\end{proof}

Now we can prove the next part of our classification.

\begin{theorem}\label{impos_struc} Suppose that $\Lambda=T_\kk V/(f_1, f_2)$, where $f_1$ and $f_2$ are two linearly independent elements of $V^{\ot s}$.
Then $(\Lambda^!)^{(s)}$ is not isomorphic to $A^2(q)$, $A^3$, $A^8$ and $A^9(q)$ for any $q\in\kk$.
\end{theorem}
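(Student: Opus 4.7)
The plan is to combine Lemma \ref{main_strip}, which gives $M^{\ot_A^s}\cong S\oplus J(A)[1]$ for some $S$ concentrated in degree zero, with the structural results of this subsection (Corollary \ref{second_coro} and Lemmas \ref{B2_ts}, \ref{B11_ts}, \ref{Z2_ts}, \ref{B1_ts}) together with the Table of decompositions for $J(A)[1]$. The decomposition from Lemma \ref{main_strip} implies that, modulo direct summands isomorphic to $Z_1$ on the right and to $Z_1'$ on the left, the bimodule $M^{\ot_A^s}$ must have exactly the same non-semisimple right and left $A$-module structures as $J(A)[1]$. For each of the four forbidden algebras I would run through the right $A$-module structures on $M$ allowed by Corollary \ref{second_coro} and show that no choice yields the right and left decompositions of $J(A)[1]$ read off the Table.

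First I would dispense with the cases $A^2(q)$ with $q\neq 0$ and $A^3$: there the Table gives $J(A)[1]_A\cong Z_2$, whereas Lemmas \ref{B2_ts}--\ref{B1_ts} show that the non-semisimple part of $(M^{\ot_A^s})_A$ is always one of $0$, $B_1(\alpha,\beta)$, $B_2(\alpha,\beta)$, or $B_1(\alpha,\beta)\oplus B_1(\gamma,\delta)$, never containing an indecomposable summand isomorphic to $Z_2$. (The option $M_A\cong Z_2\oplus Z_1^m$ is excluded because Lemma \ref{Z2_ts} then forces $M^{\ot_A^s}$ to be concentrated in degree zero, contradicting $J(A)[1]\neq 0$.)

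Next I would handle the three remaining cases by using the right decomposition of $J(A)[1]$ to pin down the form of $M_A$ and then read off a contradiction on the left. For $A^2(0)$: $J(A)[1]_A\cong B_1(0,1)\oplus Z_1$ forces $M_A\cong B_1(0,1)\oplus Z_1^m$, and then Lemma \ref{B1_ts}(2) is the only branch yielding a non-semisimple $M^{\ot_A^s}$, so the left parameter of $M$ must also be $(0,1)$; hence ${}_AM^{\ot_A^s}$ has a direct summand $B_1'(0,1)$, whereas the Table gives ${}_AJ(A)[1]\cong B_1'(1,0)\oplus Z_1'$, contradiction. For $A^8$: $J(A)[1]_A\cong B_2(0,1)$ forces $M_A\cong B_2(0,1)\oplus Z_1^m$, so by Lemma \ref{B2_ts} one has ${}_AM^{\ot_A^s}\cong B_2'(0,1)\oplus (Z_1')^?$, contradicting ${}_AJ(A)[1]\cong B_2'(1,0)$ since $(0,1)\neq (1,0)$ in $\mathbb{P}^1$. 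For $A^9(q)$: $J(A)[1]_A\cong B_1(1,-q)\oplus B_1(0,1)$ has two distinct $B_1$ summands on the right, so the only possibility from Corollary \ref{second_coro} is $M_A\cong B_1(1,-q)\oplus B_1(0,1)\oplus Z_1^m$; by Lemma \ref{B11_ts}, ${}_AM^{\ot_A^s}$ then has summands $B_1'(1,-q)\oplus B_1'(0,1)$, whereas ${}_AJ(A)[1]\cong B_1'(1,0)\oplus B_1'(1,1)$, and no bijection of unordered pairs matches $\{(1,-q),(0,1)\}$ with $\{(1,0),(1,1)\}$ in $\mathbb{P}^1$ (since $(0,1)$ is projectively distinct from both $(1,0)$ and $(1,1)$).

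The main obstacle is not conceptual but bookkeeping: one must be careful about the projective identifications in $\mathbb{P}^1$ when comparing the parameters of $B_1$ and $B_2$ summands on the two sides, and must verify that the pairing lemmas (\ref{B2_ts} and \ref{B11_ts}) are indeed tight enough to force the matching of right and left parameters. Once these pairings are in place, the contradictions are read off directly from the Table, so no further computation is needed.
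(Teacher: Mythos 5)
Your proposal is correct and follows exactly the route of the paper's proof, which simply cites Corollary \ref{second_coro}, Lemmas \ref{B2_ts}--\ref{B1_ts}, the Table of decompositions, and Lemma \ref{main_strip} and leaves the case-by-case comparison to the reader. Your case analysis correctly supplies those omitted details: the non-semisimple parts of the right and left decompositions of $M^{\ot_A^s}$ must match those of $J(A)[1]$ by Krull--Schmidt, and the parameter mismatches you identify for $A^2(q)$, $A^3$, $A^8$ and $A^9(q)$ are all accurate.
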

\begin{proof} Suppose that the assertion of the theorem is not true. Then there is an $s$-homogeneous triple $(A,M,\vp)$ with $A\in \{A^2(q),A^3,A^8,A^9(q)\}_{q\in\kk}$.
One can use Corollary \ref{second_coro}, Lemmas \ref{B2_ts}, \ref{B11_ts}, \ref{Z2_ts}, and \ref{B1_ts}, and Table of decompositions for $J(A)$ to show that $M^{\ot_As}$ cannot have ${}_AJ(A)[1]$ as a left $A$-module direct summand and $J(A)[1]_A$ as a right $A$-module direct summand at the same time. This contradicts Lemma \ref{main_strip}, and thus finishes the proof of the theorem.
\end{proof}

\subsection{Non Koszulity of $\Lambda^!$}

Here, using the results about $s$-homogeneous triples $(A,M,\vp)$ with quadratic algebra $A$ having two generators and the formula \eqref{HS} for Hilbert series of $s$-Koszul algebras, we show that an algebra $\Gamma=T_\kk V/I$ with $I$ generated by a subspace $W\subset V^{\ot s}$ of codimension two cannot be $s$-Koszul if $s>2$.

\begin{theorem}\label{nkoz} Suppose that $s>2$ and $\Gamma=T_\kk V/(W)$, where $W$ is a subspace of $V^{\ot s}$. If $\dim_\kk\Gamma_s=2$, then $\Gamma$ is not $s$-Koszul.
\end{theorem}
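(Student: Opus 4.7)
My plan is to assume $\Gamma$ is $s$-Koszul and derive a contradiction by combining the Hilbert series identity implied by the extra condition with the structural classification of $M$ developed in the previous subsections.

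Set $(A, M, \vp) = \FF(\Gamma)$. Since $\dim_\kk \Gamma_s = 2$ forces $\dim_\kk A_1 = 2$, $A$ is a quadratic algebra with two generators. By Theorem \ref{impos_struc} (applied to $\Lambda = \Gamma^!$, which has two relations), $A$ is isomorphic to one of $A^0$, $A^1$, $A^4$, $A^5$, $A^6$, or $A^7(q)$ for some $q \in \kk$. Moreover $m := \dim V = \dim_\kk M_0 \ge 2$, since $m = 1$ would force $\dim_\kk \Gamma_s \le 1$.

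Since $s$-Koszulity of $\Gamma$ implies the extra condition \eqref{ec}, the equivalent Hilbert series identity \eqref{HS} holds with $(B, L, \psi) = \FF(\Gamma) = (A, M, \vp)$, $m$ as above, and $l_k = \dim_\kk (M^{\ot_A k})_1$; in particular $l_0 = \dim_\kk A_1 = 2$. The plan is to expand \eqref{HS} and compare the coefficients of $t^{s+k}$ for $2 \le k \le s-1$; this range is nonempty because $s > 2$. Routine bookkeeping yields the recurrence
\begin{equation*}
l_k = k m^{k-1} l_1 - 2(k-1) m^k, \qquad 2 \le k \le s-1,
\end{equation*}
whose $k = 2$ instance reads $l_2 = 2m l_1 - 2m^2$.

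Case analysis on $A$ then finishes the proof. For $A \in \{A^0, A^1, A^4, A^5\}$ or $A = A^7(q)$ with $q \ne 0$, $A$ satisfies \eqref{settings}, and Corollary \ref{second_coro} restricts $M_A$ to $N \oplus K$ with $N$ in the listed family of indecomposable summands and $K$ concentrated in degree zero. Lemmas \ref{B2_ts}, \ref{B11_ts}, \ref{Z2_ts}, \ref{B1_ts} then compute $l_1$ and $l_2$ in each subcase, yielding $l_1, l_2 \le 2$. In the main subcases $N \in \{B_2(\alpha, \beta),\, B_1(\alpha,\beta) \oplus B_1(\gamma,\delta)\}$ one finds $l_1 = l_2 = 2$, so the recurrence at $k=2$ becomes $(m-1)^2 = 0$, contradicting $m \ge 2$; the degenerate subcases ($N = 0, Z_2$, or $N = B_1(\alpha,\beta)$) are eliminated similarly by checking the recurrence against the corresponding values of $l_1, l_2$. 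For $A \in \{A^6, A^7(0)\}$, which lie outside \eqref{settings}, Theorem \ref{A6A70} provides the explicit form of $\Lambda = \Gamma^!$, from which $l_k$ can be computed directly to produce the analogous contradiction.

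The main obstacle is the case analysis for $A \in \{A^0, A^1\}$, where $\vp$ is not determined by $(A, M)$ and so each possible indecomposable decomposition of $M_A$ allowed by Corollary \ref{second_coro} must be tracked separately through Lemmas \ref{B2_ts}--\ref{B1_ts}. A secondary subtlety is that these structural lemmas are stated under \eqref{settings}, so the cases $A \in \{A^6, A^7(0)\}$ require direct treatment via the explicit description from Theorem \ref{A6A70}.
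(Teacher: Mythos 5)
Your proposal is correct and follows essentially the same route as the paper: assume $s$-Koszulity, invoke the extra condition to get the Hilbert series identity \eqref{HS}, extract the coefficient of $t^{s+2}$ to obtain $2m^2-2l_1m+l_2=0$, and contradict this using $m\ge 2$ together with the structural bounds $l_1\le 2$ (and $l_2=2$ when $l_1=2$) from Lemma \ref{W1}, Corollary \ref{second_coro} and Lemmas \ref{B2_ts}--\ref{B1_ts}. The extra scaffolding you add (the classification of $A$ via Theorem \ref{impos_struc} and the full recurrence for all $k$) is harmless but unnecessary, since only the $k=2$ coefficient and the case-free bounds on $l_1,l_2$ are needed.
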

\begin{proof} If $\Gamma$ is $s$-Koszul, then it satisfies the extra condition, and hence \eqref{HS} holds with $m=\dim_\kk M_0$ and $l_k=\dim_\kk (M^{\ot_Ak})_1$ for $0\le k\le s-1$.

Let us calculate the coefficient of $t^{s+2}$ in the formal power series on the left hand side of $\eqref{HS}$. Observe that $l_0= 2$, so we obtain that $\Gamma$ may be $s$-Koszul only if
$$
0=l_2-ml_1+m^2(m^s-2)-m(m^{s+1}+l_1-4m)=2m^2-2l_1m+l_2,
$$
where $l_1=\dim_\kk M_1$ and $l_2=\dim_\kk (M\ot_AM)_1$. We have $l_1\le 2$ by Lemma \ref{W1} and Corollary \ref{second_coro}. Moreover, it follows from the same results and Lemmas \ref{B2_ts} and \ref{B11_ts} that $l_2=2$ in the case $l_1=2$. Since  $m\ge 2$, we have $2m(m-l_1)+l_2>0$, and thus $\Gamma$ cannot be $s$-Koszul.
\end{proof}

\subsection{$s$-homogeneous triples with $A=A^7(q)$, $q\not=0$}\label{A7sec}

Theorem \ref{impos_struc} says that the cases $A=A^2(q)$, $A=A^3$, $A=A^8$, and $A=A^9(q)$ are impossible.  
The cases $A=A^4$ and $A=A^5$ are fully considered in Theorem \ref{A4A5} while the cases $A=A^6$ and $A=A^7(0)$ are fully considered in Theorem \ref{A6A70}. Thus, it remains to consider the cases $A=A^0$, $A=A^1$, and $A=A^7(q)$ ($q\not=0$).  Among these algebras only $A^7(q)$ give $s$-homogeneous algebras that can be recovered from the pair $(A,M)$. Because of this, the  case $A^7(q)$ is easier to deal with, and we consider it now.  This is the last case for which this paper gives a full classification. In the next section we will give some partial results on the classification in the cases $A=A^0$ and $A=A^1$.

Until the end of this section we fix some $q\in\kk^*$ and set $A=A^7(q)$. Let $(A,M,\vp)$ be an $s$-homogeneous triple.
Due to the previous results of the paper and the Table of decompositions for $J(A)$, we have $M_A\cong B_2(1,0)\oplus Z_1^m$ and ${}_AM\cong B_2'(1,0)\oplus (Z_1')^m$ for some $m\ge 0$.
Thus, we need to recover the $A$-bimodule structure gluing the given one side structures on $M$. Then the triple $(A,M,\vp)$ can be recovered uniquely modulo isomorphism.

Let us introduce the graded $A^7(q)$-bimodule $M(\lambda)$ for $\lambda\in\kk^*$ by the equality
$$
M(\lambda)=(A^{\rm op}\ot A)^2/\langle yf_1-f_1y, xf_1-\lambda f_1x, f_1x-f_2y, f_2x,xf_2, yf_2-q^{-1}f_2y \rangle_{A^{\rm op}\ot A}.
$$
As usually, $f_1$ and $f_2$ are the standard free generators. It is not difficult to show that $M(\lambda)$ is an $A^7(q)$-bimodule such that $M(\lambda)_A\cong B_2(1,0)$ and ${}_AM(\lambda)\cong B_2'(1,0)$. In particular, it has a linear presentation as a left and as a right module.

\begin{lemma}\label{iso7} Suppose that there is an automorphism $\theta$ of $A$ and an isomorphism of graded spaces $\gamma:M(\lambda)\cong M(\mu)$ such that $\gamma(ufv)=\theta(u)\gamma(f)\theta(v)$ for all $u,v\in A$, $f\in M(\lambda)$.
Then $\lambda=\mu$.
\end{lemma}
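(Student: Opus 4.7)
The plan is to first pin down $\Aut(A^7(q))$, then analyze what $\gamma$ can do on the degree~$0$ generators $f_1,f_2$, and finally compare the left actions of $x$ on $f_1$ and on $\gamma(f_1)$ to force $\lambda=\mu$.

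First I would describe graded automorphisms of $A=A^7(q)$. A general degree $1$ element $ax+by$ squares to $(q+1)ab\,yx+b^2y^2$ in $A$, where $\{yx,y^2\}$ is a basis of $A_2$. Since $q\neq 0$ and we can assume $q\neq -1$ (the case $q=-1$ needs a quick separate check, or one uses that $y^2\neq 0$ alone) this is zero only if $b=0$. Hence $x$ is, up to scalar, the unique square-zero element of $A_1$, and any graded automorphism has the form $\theta(x)=\alpha x$, $\theta(y)=\beta x+\delta y$ with $\alpha,\delta\in\kk^*$, $\beta\in\kk$. (One checks directly that any such map preserves the defining relations.)

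Next I would look at what $\gamma$ does on generators. The relations defining $M(\lambda)$ live in positive degree, so $M(\lambda)_0=\kk f_1\oplus\kk f_2$, and likewise $M(\mu)_0=\kk f_1'\oplus\kk f_2'$. Write $\gamma(f_1)=af_1'+bf_2'$ and $\gamma(f_2)=cf_1'+df_2'$; the determinant $ad-bc$ is nonzero. Apply $\gamma$ to the relation $xf_2=0$: using $\gamma(xf_2)=\theta(x)\gamma(f_2)=\alpha x(cf_1'+df_2')$, and the relations $xf_2'=0$ and $xf_1'=\mu f_1'x\neq 0$ in $M(\mu)$, one gets $\alpha c\mu f_1'x=0$, hence $c=0$ (so automatically $a,d\neq 0$).

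The final step is the key computation. Apply $\gamma$ to $xf_1=\lambda f_1x$. The left hand side gives
\[
\gamma(xf_1)=\theta(x)\gamma(f_1)=\alpha x(af_1'+bf_2')=\alpha a\,xf_1'=\alpha a\mu f_1'x,
\]
using $xf_2'=0$ and $xf_1'=\mu f_1'x$. The right hand side gives
\[
\lambda\gamma(f_1)\theta(x)=\lambda(af_1'+bf_2')\alpha x=\lambda\alpha a\,f_1'x,
\]
using $f_2'x=0$. Equating these in $M(\mu)_1$ and noting that $f_1'x=f_2'y$ is nonzero (since $M(\mu)_A\cong B_2(1,0)$, where this element generates a free $A$-summand in degree $1$), we may cancel $\alpha a\,f_1'x$ and obtain $\mu=\lambda$.

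The main potential obstacle is the structural lemma that $x$ is distinguished in $A^7(q)_1$, since the argument breaks if another square-zero direction exists; but this is immediate from the basis of $A_2$ together with $q\neq 0$. Everything else is just tracking that the relations $xf_2=0$, $f_2x=0$, and $xf_1=\lambda f_1x$ are rigid enough under the $\theta$-twisted action to force both $c=0$ and $\lambda=\mu$.
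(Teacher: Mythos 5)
Your proof is correct, and it takes a genuinely different route from the paper's. The paper argues by attaching to each $M(b)$ the family of invariants $\kappa_{a,b}(\alpha,\beta)=\dim_\kk\{f\in M(b)_0\mid (\alpha x-\beta y)f=af(\alpha x-\beta y)\}$ on $\mathbb{P}^1$, observing that $\kappa_{a,\lambda}=\kappa_{a,\mu}\circ g$ for the projective transformation $g$ induced by $\theta|_{A_1}$, and then recovering $b$ from the set of $a$ for which $\kappa_{a,b}$ is not identically zero (namely $\{1,q^{-1},b\}$), with an extra case analysis via the value $2$ of $\kappa_{q^{-1},b}$ to separate the degenerate situations $b\in\{1,q^{-1}\}$. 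You instead exploit the rigidity of $A^7(q)$ directly: since $(ax+by)^2=ab(q+1)yx+b^2y^2$ in the basis $\{yx,y^2\}$ of $A_2$, the element $x$ is the unique square-zero direction in $A_1$ (the $b^2y^2$ term alone forces $b=0$, so no separate case for $q=-1$ is really needed), hence $\theta(x)=\alpha x$; then pushing the relations $xf_2=0$ and $xf_1=\lambda f_1x$ through $\gamma$ forces first $c=0$ and then $\mu=\lambda$, using that $f_1'x=f_2'y\neq 0$ in $M(\mu)_1$. Your computation is shorter, avoids the case analysis on whether $\lambda$ or $\mu$ coincides with $1$ or $q^{-1}$, and as a byproduct pins down the shape of $\Aut(A^7(q))$ and of $\gamma|_{M_0}$ (lower triangular), which the paper's invariant-counting argument does not give; the paper's approach, on the other hand, is insensitive to the explicit presentation of $M(\lambda)$ and only uses isomorphism invariants. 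Both proofs are complete; yours is arguably the more transparent one here.
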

\begin{proof} For $a,b\in\kk^*$ let us consider the function $\kappa_{a,b}:\mathbb{P}^1\rightarrow \bZ$ defined by the equality
$$
\kappa_{a,b}(\alpha,\beta)=\dim_\kk\{f\in M(b)_0\mid (\alpha x-\beta y)f=af(\alpha x-\beta y)\}.
$$
It is easy to see that $\kappa_{a,\lambda}=\kappa_{a,\mu}\circ g$, where $g:\mathbb{P}^1\rightarrow \mathbb{P}^1$ is the linear transformation induced by $\theta|_{A_1}$. One can show now that $\kappa_{a,b}$ has nonzero values if and only if $a\in\{1,q^{-1},b\}$. Moreover, if $q\not=1$, then
$\kappa_{q^{-1},b}$ has value $2$ in some point if and only if $b=q^{-1}$.
Thus, $b$ can be recovered if we know the set of values of $\kappa_{a,b}$ for all $a\in\kk^*$. Since these sets coincide for all $a$ for the elements $\lambda$ and $\mu$, we have $\lambda=\mu$.
\end{proof}

\begin{lemma}\label{ten7} $M(\lambda)\ot_A M(\mu)\cong M(\lambda\mu)\oplus\kk^2$.
\end{lemma}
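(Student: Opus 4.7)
The plan is to construct the decomposition explicitly. Let $f_1, f_2$ be the standard generators of $M(\lambda)$ and $g_1, g_2$ those of $M(\mu)$, and write $h_{ij} := f_i \ot g_j$. Since both bimodules are nonnegatively graded, the balanced relations impose no identifications in degree zero, so $\big(M(\lambda) \ot_A M(\mu)\big)_0 = \kk\langle h_{11}, h_{12}, h_{21}, h_{22}\rangle$ is four-dimensional. The first step is to compute all $A_1$-actions on the $h_{ij}$ from the defining relations of the two bimodules. The two decisive identities are
$$h_{11} x = f_1 \ot g_1 x = f_1 \ot g_2 y = h_{12} y$$
(coming from $g_1 x = g_2 y$ in $M(\mu)$) and
$$x h_{11} = \lambda f_1 x \ot g_1 = \lambda f_1 \ot x g_1 = \lambda \mu \, f_1 \ot g_1 x = \lambda \mu \, h_{11} x,$$
in which the two parameters combine multiplicatively. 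The remaining actions are straightforward; using $g_2 y = q y g_2$ and $f_2 y = f_1 x$, one finds $h_{22} y = 0$, $h_{21} x = 0$, $h_{21} y = \mu \, h_{11} x$, and $y h_{21} = \mu \, y h_{12}$, and their analogs on the other side.

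These formulas identify the trivial summand $K := \kk\langle h_{22}, h_{21} - \mu h_{12}\rangle$, on which $A_1$ acts as zero on both sides; in particular $(h_{21} - \mu h_{12}) y = \mu h_{11} x - \mu h_{12} y = 0$ and $y(h_{21} - \mu h_{12}) = \mu y h_{12} - \mu y h_{12} = 0$. Thus $K \cong \kk^2$ is a subbimodule concentrated in degree zero. For the complementary summand I would define an $A$-bimodule homomorphism $\theta : M(\lambda\mu) \to M(\lambda) \ot_A M(\mu)$ sending the standard generators $F_1, F_2$ of $M(\lambda\mu)$ to $h_{11}$ and $h_{12}$. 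The six defining relations of $M(\lambda\mu)$ translate under $\theta$ into exactly the identities computed above (the relation $xF_1 = \lambda \mu F_1 x$ is witnessed by the second display, $F_1 x = F_2 y$ by the first, and $yF_2 = q^{-1} F_2 y$ by a short calculation $y h_{12} = f_1 \ot y g_2 = q^{-1} f_1 \ot g_2 y = q^{-1} h_{12} y$), so $\theta$ is well-defined. Its image $N$ meets $K$ only at zero in degree zero and the two together span $\big(M(\lambda) \ot_A M(\mu)\big)_0$.

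It remains to verify that $\theta$ is an isomorphism onto $N$. This reduces to a Hilbert series match: $M(\lambda\mu)$ has two-dimensional component in every nonnegative degree since $M(\lambda\mu)_A \cong B_2(1,0)$, while the explicit relations computed in the first step show that $\big(M(\lambda) \ot_A M(\mu)\big)_n$ is two-dimensional for every $n \ge 1$ (every degree-$n$ element reduces to the span of $h_{11} y^n$ and $h_{11} x y^{n-1}$). The surjection $\theta : M(\lambda\mu) \twoheadrightarrow N$ is therefore an isomorphism, giving the claimed decomposition. The main subtlety lies in step one: threading the scalars $\lambda$, $\mu$, and $q$ through the tensor identifications and recognizing that exactly the product $\lambda \mu$ (and not, say, some expression involving $q$) emerges as the $x$-commutation coefficient for $h_{11}$, which is precisely what identifies the nontrivial summand as $M(\lambda\mu)$ rather than another member of the family.
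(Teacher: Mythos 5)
Your construction is the same as the paper's: you identify the same two elements $h_{22}$ and $h_{21}-\mu h_{12}$ annihilated by $A_1$ on both sides, send the standard generators of $M(\lambda\mu)$ to $h_{11}$ and $h_{12}$, and verify the six defining relations; all of those computations are correct, including the key identity $xh_{11}=\lambda\mu\, h_{11}x$.

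The gap is in the last step. Your reduction of $\big(M(\lambda)\ot_AM(\mu)\big)_n$ to the span of $h_{11}y^n$ and $h_{12}y^n=h_{11}xy^{n-1}$ only proves $\dim\big(M(\lambda)\ot_AM(\mu)\big)_n\le 2$, i.e.\ that $\theta$ is surjective onto $N$ --- which you already knew. To conclude that the surjection $\theta:M(\lambda\mu)\twoheadrightarrow N$ is injective you need the opposite inequality, namely that $h_{11}y^n$ and $h_{12}y^n$ remain \emph{linearly independent} in the tensor product for every $n$, and you give no argument for that. This is not automatic: tensor products over $A$ of modules generated in degree zero can and do collapse (e.g.\ $\big(B_1(1,0)\ot_AB_1'(0,1)\big)_1=0$, as used elsewhere in the paper), so a lower bound genuinely has to be proved. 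The paper closes exactly this hole by invoking the fact that a tensor product of bimodules linearly presented on both sides is again linearly presented (\cite[Theorem 2]{MSV}, quoted in the ``General ideas'' subsection): a surjection between linearly presented modules that matches dimensions in degrees $0$ and $1$ is an isomorphism, so only the single finite check $\dim\big(M(\lambda)\ot_AM(\mu)\big)_1=2$ is needed. Your proof becomes complete if you either add that appeal to linear presentation together with the degree-one computation, or directly exhibit the independence of $h_{11}y^n$ and $h_{12}y^n$ in every degree.
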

\begin{proof} Let us denote by $g_1,g_2$ the standard generators of $M(\lambda)$ and by $h_1,h_2$ the standard generators of $M(\mu)$. It is not difficult to check that
$$A_1(g_2\ot h_2)=(g_2\ot h_2)A_1=A_1(g_2\ot h_1-\mu g_1\ot h_2)=(g_2\ot h_1-\mu g_1\ot h_2)A_1=0,$$
i.e. there is a homomorphism from $(A^{\rm op}\ot A)^2\oplus\kk^2$ to $M(\lambda)\ot_A M(\mu)$ sending the standard generators $f_1$ and $f_2$ of $(A^{\rm op}\ot A)^2$ to $g_1\ot h_2$ and $g_1\ot h_1$ and the basis $e_1,e_2$ of $\kk^2$ to $g_2\ot h_2$ and $g_2\ot h_1-\mu g_1\ot h_2$ correspondingly. Direct calculations show that this homomorphism factors throw the natural projection $ (A^{\rm op}\ot A)^2\oplus\kk^2\twoheadrightarrow M_2(\lambda\mu)\oplus\kk^2$. The obtained map $\gamma:M(\lambda\mu)\oplus\kk^2\rightarrow M(\lambda)\ot_A M(\mu)$ is surjective. Since both bimodules are linearly presented and have the same dimensions of the first and the second components, $\gamma$ is an isomorphism.
\end{proof}

\begin{lemma}\label{shp7} If $(A^7(q),M,\vp)$  is an $s$-homogeneous triple, then $(A^7(q),M,\vp)\cong(A^7(q),M(p)\oplus\kk^m,\vp_p)$ for some $m\ge 0$, $p\in\kk^*$ such that $p^s=q$ and some map $\vp_p$.
\end{lemma}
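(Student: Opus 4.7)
The plan is to first split off a trivial $A$-bimodule summand of $M$, and then pin down the remaining bimodule $N$ via the axioms of an $s$-homogeneous triple.

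First I would handle the splitting. By Lemma \ref{B2_ts} we have $M_A\cong B_2(1,0)\oplus Z_1^m$ and ${}_AM\cong B_2'(1,0)\oplus (Z_1')^m$ for some $m\ge 0$. The presence of the $B_2(1,0)$ summand ensures that $M_0u\neq 0$ for every nonzero $u\in A_1$, so Lemma \ref{ker_vp} applies and forces $\vp$ to vanish on any tensor in which some factor lies in $r\Ann_{M_0}(A_1)\supseteq Z_1^m$ or in $l\Ann_{M_0}(A_1)\supseteq (Z_1')^m$. A dimension count, exactly as in the proof of Lemma \ref{B2_ts}, then shows that these two annihilator subspaces of $M_0$ must coincide, so that $M$ admits an $A$-bimodule decomposition $M\cong N\oplus\kk^m$ where $N_A\cong B_2(1,0)$, ${}_AN\cong B_2'(1,0)$, and $\vp$ factors through the canonical projection $M^{\ot_A^s}\twoheadrightarrow N^{\ot_A^s}$.

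Next I would pick a basis $f_1,f_2$ of $N_0$ with $\langle f_2\rangle=r\Ann_{N_0}(x)$; after rescaling we may assume $f_2x=0$ and $f_1x=f_2y$. Writing the left action as $xf_i=\sum_j m_{ij}f_jy$ and $yf_i=\sum_j n_{ij}f_jy$, the relations $x^2=0$ and $xy=qyx$ of $A^7(q)$ yield polynomial constraints on the eight parameters (for instance $m_{11}+m_{22}=0$ and $m_{12}m_{21}=-m_{11}^2$). The main step is then to invoke axiom (2) of Definition \ref{str}: the identity $e_1\cdot\vp(e_2\ot\cdots\ot e_{s+1})=\vp(e_1\ot\cdots\ot e_s)\cdot e_{s+1}$, applied first with all $e_i=f_1$ and then with exactly one $e_i$ replaced by $f_2$ at each possible position, yields a recursive linear system that determines $\vp$ on every degree-zero monomial in $f_1,f_2$ in terms of $\vp(f_1^{\ot s})$. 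Combining this recursion with the surjectivity $\Im\vp|_{N_0^{\ot s}}=A_1$ (needed to hit both $x$ and $y$) forces the off-diagonal left-action coefficients to vanish and gives the normalized form $xf_2=0$, $yf_1=f_1y$, $yf_2=q^{-1}f_2y$, $xf_1=pf_2y$ for some $p\in\kk^*$; comparing with the presentation of $M(\lambda)$ identifies $N\cong M(p)$.

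To extract the relation $p^s=q$, I would iterate Lemma \ref{ten7} to obtain $N^{\ot_A^s}\cong M(p^s)\oplus\kk^{2^s-2}$. Since $J(A)[1]$ is itself isomorphic to $M(q)$ (a short direct check from the presentation of $A^7(q)$), Lemma \ref{main_strip} and Lemma \ref{iso7} together identify the nontrivial summand of $N^{\ot_A^s}$ with $M(q)$, whence $p^s=q$. The resulting $\vp_p$ is then the composition of the isomorphism $N^{\ot_A^s}\cong M(p^s)\oplus\kk^{2^s-2}$ with the projection onto $M(p^s)=M(q)\cong J(A)[1]$ and the inclusion $J(A)[1]\hookrightarrow A[1]$, extended trivially on $\kk^m$. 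The principal obstacle is ruling out bimodule structures in which $l\Ann_{N_0}(x)\neq r\Ann_{N_0}(x)$ or in which the off-diagonal left-action coefficients are nonzero; in each such case, one must show that the axiom-(2) recursion together with the surjectivity requirement collapses $\Im\vp$ to a proper subspace of $A_1$, contradicting $\Im\vp=J(A)[1]$.
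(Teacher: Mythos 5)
Your proposal is correct and follows essentially the same route as the paper: split off the semisimple summand $\kk^m$ via Lemma \ref{ker_vp} and the argument of Lemma \ref{B2_ts}, determine the left action on the $B_2(1,0)$ summand from the relations of $A^7(q)$ together with axiom (2) of Definition \ref{str}, and then obtain $p^s=q$ by iterating Lemma \ref{ten7} and comparing with $J\big(A^7(q)\big)[1]\cong M(q)$ via Lemmas \ref{main_strip} and \ref{iso7}. One small imprecision: the off-diagonal coefficient $c$ in $yf_1=f_1y+cf_2y$ is \emph{not} forced to vanish by the recursion and surjectivity (those only yield $b=1$ and $a=q^{-1}$); in the paper it is removed by a normalization — replacing $f_1$ by $f_1+\frac{qc}{q-1}f_2$ when $q\neq 1$, or twisting by an automorphism of $A$ when $q=1\neq p$ — which is harmless since the statement is up to isomorphism of triples, but it is a change of basis rather than a consequence of the axioms.
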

\begin{proof} Due to the proof of Lemma \ref{B2_ts}, we may assume that $M=N\oplus \kk^m$, where $N$ is an $A$-bimodule isomorphic to $B_2(1,0)$ as an $A$-module. Let $f_1,f_2$ be the standard generators of $N$ as an $A$-module $B_2(1,0)$, i.e. $f_1,f_2\in N_0$, $f_1x=f_2y$ and $f_2x=0$. Note that the basis of $N_i$ is $f_1y^i,f_2y^i$ for any $i\ge 0$. Since $l\Ann_{N_1}(x)=\langle f_2y\rangle$, we have $A_1f_2\subset \langle f_2y\rangle$. We have also
$$r\Ann_{N_0}(x)=\{f\in N_0\mid \dim_\kk (A_1f)=1\},$$ i.e. $xf_2=0$ and $yf_2=af_2y$ for some $a\in \kk^*$ (note that $r\Ann_{N_0}(y)=0$). Moreover, $xN_0=r\Ann_{N_1}(x)$, and hence $xf_1=pf_2y$ for some $p\in\kk^*$. Finally, we have $yf_1=bf_1y+cf_2y$ for some $b,c\in\kk$, $b\not=0$, because $A_1N_0=N_1$.

We have $\vp(M^{\ot_Ai}\ot_A \kk^m\ot_AM^{\ot_A(s-1-i)})=0$ for any $1\le i\le s$ by Lemma \ref{ker_vp}. Moreover, it is easy to show using the equality $xf_1=pf_1x$ that $x(f_{i_1}\ot\dots\ot f_{i_s})=0$, and hence $\vp(f_{i_1}\ot\dots\ot f_{i_s})\in\langle x\rangle$ if one of the numbers $i_1,\dots,i_s\in\{1,2\}$ equals $2$.
Since $A_1\subset \Im\vp$, we have $\vp\left(f_1^{\ot s}\right)=\alpha x+\beta y$ for some $\alpha,\beta\in\kk$, $\beta\not=0$. Then we have
\begin{equation}\label{eq_0}
\beta f_1y+\alpha f_2y=f_1\vp\left(f_1^{\ot s}\right)=\vp\left(f_1^{\ot s}\right)f_1=\beta bf_1y+(\alpha p+\beta c)f_2y,
\end{equation}
and hence $b=1$. Now, using the equality $0=(xy-qyx)f_1=(p-qap)f_2y^2$, we get $a=q^{-1}$.

Suppose that $q\not=1$. The map sending $f_1$ to $f_1+\frac{qc}{q-1} f_1$ and $f_2$ to $f_2$ is an automorphism of $N_A$.
At the same time, we have $y\left(f_1+\frac{qc}{q-1} f_2\right)=\left(f_1+\frac{qc}{q-1} f_2\right)y$. Hence, we may assume $c=0$ in the case $q\not=1$.

Let us now consider the case $q=1$. If we have also $p=1$, then $c=0$ by \eqref{eq_0}. Suppose that $p\not=1$. Then we can change the $A$-bimodule structure of $M$ via the automorphism of $A^7(q)$ sending $x$ to $x$ and $y$ to $y+\frac{c}{1-p}x$. Note that in this way we will not change the right structure. Since $\left(y+\frac{c}{1-p}x\right)f_1=f_1\left(y+\frac{c}{1-p}x\right)$, we again may assume that $c=0$.

Thus, it remains to show that $p^s=q$. But we have $(M(p)\oplus\kk^m)^{\ot_As}\cong M(p^s)\oplus \kk^{(m+2)^s-2}$ by Lemma \ref{ten7}. Since $J\big(A^7(q)\big)\cong M(q)$, we have $p^s=q$ by Lemma \ref{iso7}.
\end{proof}

\begin{theorem}\label{A7q} Suppose that $\Lambda=T_\kk V/(f_1, f_2)$, where $f_1$ and $f_2$ are two linearly independent elements of $V^{\ot s}$.
Suppose that $(\Lambda^!)^{(s)}\cong\kk\langle x,y\rangle/(x^2,xy-qyx)=A^7(q)$ for some $q\in\kk^*$. Then
$$\Lambda\cong\kk\langle x_1,\dots,x_m,y_1,y_2\rangle/(y_1^s,\sum\limits_{i=0}^{s-1}p^{s-i-1}y_1^iy_2y_1^{s-i-1})$$
for some $m\ge 0$ and some $p\in\kk^*$ such that $p^s=q$. In particular, $\Lambda$ is $s$-Koszul.
\end{theorem}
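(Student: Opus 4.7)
The plan is to reduce via Lemma \ref{shp7} to an explicit $s$-homogeneous triple, then trace the iterated isomorphism of Lemma \ref{ten7} to read off the dual relations of $\Lambda$.

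First, Lemma \ref{shp7} gives an isomorphism $\FF(\Lambda^!)\cong(A^7(q),M(p)\oplus\kk^m,\vp_p)$ for some $m\ge 0$ and some $p\in\kk^*$ with $p^s=q$. The Table of decompositions in Subsection \ref{ind_sec} shows that $J(A^7(q))[1]\cong M(q)$ is indecomposable as a bimodule and not concentrated in degree zero, so by Lemma \ref{main_strip} together with the remark following it, the triple is determined by its first two components up to isomorphism. Hence $\Lambda\cong\GG(A,M,\vp_p)^!=T_\kk M_0^*/(\vp_p^*A_1^*)$ is fixed once I exhibit any single compatible $\vp_p$ and dualize.

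Fix the basis $f_1,f_2,e_1,\dots,e_m$ of $M_0$, with $f_1,f_2$ the standard generators of $M(p)$. By Lemma \ref{ker_vp} and its left analogue, $\vp_p$ vanishes on any tensor containing some $e_i$, so $x_j:=e_j^*$ appears in $\Lambda$ only as a free generator. The heart of the argument is to compute $\vp_p$ on tensors of $f_1$ and $f_2$. Iterating Lemma \ref{ten7} I obtain $M(p)^{\ot_A s}\cong M(p^s)\oplus\kk^K$, and by induction the generators of the $M(p^s)$-summand correspond to $f_1^{\ot s}$ and $f_1^{\ot(s-1)}\ot f_2$. Composing with the explicit isomorphism $M(p^s)\cong M(q)\cong J(A^7(q))[1]$ (normalized so that $f_1^{\ot s}\mapsto y$, which by the bimodule relation $F_1x=F_2y$ forces $f_1^{\ot(s-1)}\ot f_2\mapsto q^{-1}x$) and using at each step the degree-zero relation $f_2\ot F_1^{(s)}\equiv p^sf_1\ot F_2^{(s)}$ in $M(p)\ot_A M(p^s)$ coming from Lemma \ref{ten7}, a short induction on $s$ yields
\[
\vp_p\bigl(f_1^{\ot(j-1)}\ot f_2\ot f_1^{\ot(s-j)}\bigr)=p^{s-j}q^{-1}x,\qquad 1\le j\le s,
\]
while every tensor of $f_1,f_2$ with two or more copies of $f_2$ lies in $\Ker\vp_p$.

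Dualizing with $y_1:=f_1^*$ and $y_2:=f_2^*$ gives $\vp_p^*(y^*)=y_1^s$ and $\vp_p^*(x^*)=q^{-1}\sum_{i=0}^{s-1}p^{s-i-1}y_1^iy_2y_1^{s-i-1}$ (after the reindex $i=j-1$); after rescaling the second relation by $q$ we recover precisely the presentation stated. For the $s$-Koszulity I verify \eqref{HSm} directly, using $\HH_{A^7(q)}(t)=(1+t)/(1-t)$ and the computation $\HH_M(t)=m+2/(1-t)$ (immediate from Lemma \ref{B2_ts}) to pin down $\HH_\Lambda(t)$, and then matching this against the Hilbert series of the explicit algebra obtained from the two reductions $y_1^s\to 0$ and $y_1^{s-1}y_2\to -\sum_{i=1}^{s-1}p^{-i}y_1^{s-i-1}y_2y_1^i$. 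The main obstacle is the inductive tracking of the coefficient $p^{s-j}$, where one must correctly combine the factor $\mu=p^s$ from the $\kk^2$-summand in Lemma \ref{ten7} with the hypothesis applied to the length-$s$ subtensor.
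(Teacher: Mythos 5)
Your derivation of the presentation of $\Lambda$ is essentially correct. The reduction via Lemma \ref{shp7} and the elimination of the $\kk^m$-summand via Lemma \ref{ker_vp} match the paper; where you diverge is in computing the coefficients $\vp_p\bigl(f_1^{\ot(j-1)}\ot f_2\ot f_1^{\ot(s-j)}\bigr)$ by tracing the iterated splitting $M(p)^{\ot_As}\cong M(p^s)\oplus\kk^K$ of Lemma \ref{ten7} and normalizing the identification $M(p^s)\cong M(q)\cong J(A^7(q))[1]$, whereas the paper gets the recursion $a_i=p^{-1}a_{i-1}$ directly from the compatibility $1_M\ot_A\vp=\vp\ot_A 1_M$ applied to length-$(s+1)$ tensors. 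Both routes give $a_k\propto p^{-k}$, and your use of the remark after Lemma \ref{main_strip} (legitimate here, since $J(A^7(q))[1]\cong M(q)$ is indecomposable and not concentrated in degree zero) guarantees that any one compatible $\vp_p$ determines $\Lambda$. One small point you should make explicit: $\vp_p$ kills the trivial summand $\kk^K$ because a bimodule map from a trivial module into $J(A^7(q))[1]$ lands in the elements of $A_1$ annihilated by $A_1$ on both sides, which is zero; this is what lets you identify $\vp_p$ with your normalized projection (up to a harmless scalar, and up to adding a multiple of $x$ to $\vp_p(f_1^{\ot s})$ when $q=1$, which does not change the span of the two relations).

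The genuine gap is in the $s$-Koszulity argument. You propose to conclude $s$-Koszulity by verifying the Hilbert series identity \eqref{HSm}, but the paper states explicitly, right after \eqref{HSm}, that this identity is necessary and \emph{not} sufficient for $s$-Koszulity, even for $s=2$. For two-relation algebras the identity does turn out to be equivalent to $s$-Koszulity, but that is the content of Theorem \ref{sKosz}, a separate nontrivial result proved later via a degree-shifting argument on the generalized Koszul complex; you neither prove nor cite anything of the sort, so as written the last step does not follow. Moreover, your computation of $\HH_\Lambda(t)$ via the two reductions $y_1^s\to 0$ and $y_1^{s-1}y_2\to-\sum_{i=1}^{s-1}p^{-i}y_1^{s-i-1}y_2y_1^{i}$ implicitly assumes that these rewriting rules are confluent, i.e.\ that the two relations form a Gr\"obner basis; once you check that (which is what the paper does, with tips $y_1^s$ and $y_1^{s-1}y_2$), Koszulity follows directly from \cite[Corollary 12]{MSV} and the Hilbert series detour becomes unnecessary. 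So either supply the Gr\"obner basis verification and cite that corollary, or first establish that for this algebra exactness of the generalized Koszul complex in the second term (equivalently, the Hilbert series identity) suffices.
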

\begin{proof} We may assume $\FF(\Lambda^!)=(A^7(q),M(p)\oplus\kk^m,\vp_p)$ by Lemma \ref{shp7}. It remains to describe the kernel of $\vp_p$.
Let $f_1$ and $f_2$ be the standard generators of $M(p)$.
 As we have mentioned above, $\vp_p$ can have nonzero values only on the tensors of the form $X=f_{i_1}\ot\dots\ot f_{i_s}$ with $i_1,\dots,i_s\in\{1,2\}$. Also if one of the indices $i_k$ ($1\le k\le s$) equals $2$, then $\vp_p(X)\in\langle x\rangle$. Let us prove that $\vp_p(X)=0$ if there are two different $1\le k<l\le s$ such that $i_k=i_l=2$. It is enough to prove that $A_1X=0$. We have already mentioned that $xX=0$. We may assume $i_1=i_2=\dots=i_{k-1}=1$. Then we have
 $yX=f_1^{\ot(k-1)}\ot yf_2\ot X'=\frac{1}{q}f_1^{\ot k}\ot xX'=0$, because the tensor $X'$ contains $f_2$ by our assumption. 
 Let us now look at the equality $f_1\vp_p(f_1^{\ot (i-1)}\ot f_2\ot f_1^{\ot(s-i)})=\vp_p(f_1^{\ot i}\ot f_2\ot f_1^{\ot(s-i-1)})f_2$ for $1\le i\le s-1$. Substituting
 $\vp_p(f_1^{\ot k}\ot f_2\ot f_1^{\ot(s-k-1)})=a_kx$ with $a_k\in\kk$ ($0\le k\le s-1$) to this equality, we get $a_i=\frac{1}{p}a_{i-1}$.
 Thus, we have proved that decomposable tensors with a component belonging to $\kk^m$, decomposable tensors with minimum two components equal to $f_2$ and the tensors $pf_1^{\ot i}\ot f_2\ot f_1^{\ot(s-i-1)}-f_1^{\ot (i-1)}\ot f_2\ot f_1^{\ot(s-i)}$ ($1\le i\le s-1$) belong to the kernel of $\vp_p$. Since we have obtained a subspace of codimension two, the listed elements generate $\Ker\vp_p$.
Now it is easy to see that  $\Lambda\cong \GG(A^7(q),M(p)\oplus\kk^m,\vp_p)^!=T\big((M(p)_0\oplus\kk^m)^*\big)/\big(\vp_p^*A^7(q)_1\big)$ has the required form.

The $s$-Koszulity of $\Lambda$ follows from the fact that $G=\{y_1^s,\sum\limits_{i=0}^{s-1}p^{s-i-1}y_1^iy_2y_1^{s-i-1}\}$ is a Gr\"obner basis with respect to the lexicographical order such that $y_1>y_2>x_1>\dots>x_m$. Indeed, we have $tip(G)=\{y_1^s,y_1^{s-1}y_2\}$ and the required assertion follows, for example, from \cite[Corollary 12]{MSV}.
\end{proof}

\section{Triples not determined by two components}

The only cases that have not been considered yet are $A=A^0$ and $A=A^1$ (see the beginning of Subsection \ref{A7sec}). The main difficulty of these cases is that $J(A)[1]$ contains simple $A$-bimodule summands, and thus $\Lambda$ cannot be recovered from the pair $(A,M)$ while the classification of maps $\vp:M^{\ot^{s}_A }\rightarrow A[1]$ turning $(A,M,\vp)$ into an $s$-homogeneous triple is at least as difficulty as the classification of all $s$-homogeneous algebras with one relation.  Nevertheless, in these cases we can say something about the algebra $\Lambda$.

It follows from Corollary \ref{second_coro}, Lemmas \ref{B2_ts}, \ref{B11_ts}, \ref{Z2_ts} and \ref{B1_ts} and Table of decompositions for $J(A)$ that all the pairs that we have to consider satisfy one of the conditions:
\begin{enumerate}
    \item $A=A^1$, $M_A\cong B_1(1,0)\oplus Z_1^m$, ${}_AM\cong B_1'(1,0)\oplus (Z_1')^m$;
    \item $A=A^0$, $M_A\cong Z_2 \oplus Z_1^m$, ${}_AM\cong Z_2'\oplus (Z_1')^m$
    \item $A=A^0$, $M_A\cong B_1(\alpha,\beta)\oplus Z_1^m$, ${}_AM\cong B_1'(\alpha',\beta')\oplus (Z_1')^m$ for some $(\alpha,\beta),(\alpha',\beta')\in \mathbb{P}^1$, $(\alpha,\beta)\not=(\alpha',\beta')$. In this case after a change of basis we may assume that $(\alpha,\beta)=(1,0)$ and $(\alpha',\beta')=(0,1)$;
    \item $A=A^0$, $M=\kk^m$.
\end{enumerate}

In this section we will give some addition information in each of these cases. 

%To discuss the $s$-Koszulity property we will need the following definition.

%\begin{Def}{\rm
%Let $V$ be a finite dimensional space and $f\in V^{\ot N}$ be a homogeneous element of the tensor algebra. For $g\in V^{\ot k}$, $k\le N$ we will write $g\mid_l f$ if there exists $h\in V^{\ot (N-k)}$ such that $f=g\ot h$ and will write $g\mid_r f$ if there exists $h\in V^{\ot (N-k)}$ such that $f=h\ot g$. We will use the notation $g\nmid_l f$ and $g\nmid_r f$ for the opposite assertions.
%A {\it two side divisor} of $f$ is an element $g\in V^{\ot k}$ such that $g\mid_lf$ and $g\mid_rf$ at the same time. In this case we will call $g$ {\it nontrivial} if $0<k< N$.
%}
%\end{Def}

\subsection{$s$-homogeneous triples with $A=A^1$}

Throughout this subsection we consider  $A:=A^1$. As it was mentioned above, we have $M_A\cong B_1(1,0)\oplus Z_1^m$ and ${}_AM\cong B_1'(1,0)\oplus (Z_1')^m$ in this case. Let us first finish the description of the $A$-bimodule $M$.
Note that the bimodule $A/(x)$ satisfies the conditions $\big(A/(x)\big)_A\cong B_1(1,0)$ and ${}_A\big(A/(x)\big)\cong B_1'(1,0)$. Moreover, $\big(A/(x)\big)^{\ot_A k}\cong A/(x)$ for any $k\ge 1$.

\begin{lemma}\label{bimod_B1B1}
There is an isomorphism of $A$-bimodules $M\cong A/(x)\oplus \kk^{m}$.
\end{lemma}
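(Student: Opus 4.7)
My plan is to mimic the argument of Lemma \ref{shp7}: choose compatible bases adapted to the right and left decompositions of $M$, then pin down the remaining bimodule data by exploiting the defining relations of an $s$-homogeneous triple. The first observation is that, since the right summand $B_1(1,0)$ and all copies of $Z_1$ are annihilated by $x$ on the right, I immediately get $M_0 x = 0$, and dually $xM_0 = 0$. Hence $M_1 = M_0 y = y M_0$ is one-dimensional, and for any nonzero $u \in A_1$ one has $M_0 u \neq 0$ (equivalently $uM_0 \neq 0$) precisely when $u \notin \kk x$.

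Next I would adapt Lemma \ref{ker_vp}: although its hypothesis is no longer satisfied (because $M_0 x = 0$), its proof goes through verbatim to give the weaker statement that $\vp$ sends any tensor with a slot in $r\Ann_{M_0}(A_1) = r\Ann_{M_0}(y)$ (call this subspace $K^r$) into $\kk x$, and symmetrically for $l\Ann_{M_0}(y) =: K^l$; the contradiction in that proof only requires $M_0 u \neq 0$, which is available here for $u \notin \kk x$. Each of $K^r$, $K^l$ is a codimension-one subspace of $M_0$ corresponding to the semisimple summand in the appropriate one-sided decomposition.

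The main obstacle, and the heart of the argument, is to show $K^r = K^l$. For this I would pick any $f_1 \in M_0 \setminus K^r$ (so $f_1$ generates the $B_1(1,0)$ summand on the right) and note that $\vp(f_1^{\ot s}) = \alpha x + \beta y$ must have $\beta \neq 0$, because otherwise every decomposable tensor in a basis of $M_0^{\ot s}$ other than $f_1^{\ot s}$ would have a slot in $K^r$ and $\Im \vp$ would lie in $\kk x$. Then, for $k \in K^r$, writing $\vp(f_1^{\ot(s-1)} \ot k) = u_x x$ and applying condition 2 of Definition \ref{str} to $f_1^{\ot s} \ot k$ yields
\[
0 = u_x f_1 x = f_1 \vp(f_1^{\ot(s-1)} \ot k) = \vp(f_1^{\ot s}) k = \beta y k,
\]
forcing $y k = 0$ and hence $k \in K^l$. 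A dimension count then gives $K^r = K^l$, which I denote by $K$.

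With $K^r = K^l$ in hand the rest is immediate. Applying condition 2 of Definition \ref{str} to $f_1^{\ot(s+1)}$ gives $\beta f_1 y = \beta y f_1$, hence $f_1 y = y f_1$, which lets me identify the sub-$A$-bimodule generated by $f_1$ with $A/(x)$: its degree-$n$ piece is spanned by $f_1 y^n = y^n f_1$, and $x$ acts as zero on both sides. Since $K$ is a sub-bimodule (it is annihilated by $A_1$ on both sides and concentrated in degree zero) that is complementary to $\kk f_1$ in $M_0$, I conclude $M \cong A/(x) \oplus K \cong A/(x) \oplus \kk^m$ as graded $A$-bimodules.
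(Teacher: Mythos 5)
Your overall strategy is sound and is essentially the paper's, but the central step --- the proof that $K^r=K^l$ --- is circular as written. With your own definitions, $K^r=r\Ann_{M_0}(y)=\{w\mid yw=0\}$ and $K^l=l\Ann_{M_0}(y)=\{w\mid wy=0\}$. You start from $k\in K^r$, which already says $yk=0$; your displayed chain of equalities then ``forces $yk=0$'', which is exactly the hypothesis you began with, and the final deduction ``hence $k\in K^l$'' does not follow ($yk=0$ places $k$ in $K^r$, not in $K^l$). So as written nothing is proved. The argument is salvaged by running it in the other direction: take $k\in K^l$, so $kA_1=0$; the \emph{symmetric} version of your adapted Lemma \ref{ker_vp} (the one for $l\Ann$, which you did state) gives $\vp(f_1^{\ot(s-1)}\ot k)\in\kk x$, and the same computation $0=f_1\vp(f_1^{\ot(s-1)}\ot k)=\vp(f_1^{\ot s})k=\beta yk$ then yields $yk=0$, i.e. $k\in K^r$; since both subspaces have dimension $m$, they coincide. (The parenthetical ``so $f_1$ generates the $B_1(1,0)$ summand on the right'' contains the same left/right mix-up: $f_1\notin K^r$ means $yf_1\ne 0$, so $f_1$ generates the $B_1'(1,0)$ summand of the \emph{left} structure; this does not affect your $\beta\ne 0$ argument, which only needs a basis of $M_0$ consisting of $f_1$ and a basis of $K^r$.)

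Once this is repaired the proof is correct, and its last two steps (the nonvanishing of the $y$-component of $\vp(f_1^{\ot s})$ and the identity $\beta f_1y=\beta yf_1$ obtained from condition 2 applied to $f_1^{\ot(s+1)}$) are exactly those of the paper. Note, however, that the inclusion $K^l\subseteq K^r$ can be obtained much more cheaply, with no appeal to $\vp$ or to Lemma \ref{ker_vp}: if $ky=0$ then $(yk)y=y(ky)=0$, while $l\Ann_{M_1}(y)=0$ because $M_1\cong B_1(1,0)_1$ as a right module; hence $yk=0$. This is how the paper splits off the summand $\kk^m$ before ever using the triple axioms, reserving them for the single remaining scalar $\alpha$ in $yf=\alpha fy$.
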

\begin{proof} We know that $M_A\cong B_1(1,0)\oplus Z_1^m$. Let us recover the left $A$-module structure on $B_1(1,0)\oplus Z_1^m$ corresponding to the left $A$-module structure of $M$. Note that $l\Ann_{M_1}(y)=0$ and $(y Z_1^m)y=0$, and hence $yZ_1^m=0$. Hence, we have an isomorphism of graded $A$-bimodules  $M\cong N\oplus\kk^m$, where $N$ is a graded $A$-bimodule such that $N_A\cong B_1(1,0)$ and ${}_AN\cong B_1'(1,0)$. Let $f\in N_0$ be a nonzero element. Then we have $yf=\alpha fy$ for some $\alpha\in\kk^*$ and all that we need is to prove that $\alpha=1$. Note that $(f_1\ot\dots\ot f_s)y=0$ for $f_1\ot\dots \ot f_s\in M_0^{\ot s}$ such that $f_i\in\kk^m$ for some $1\le i\le s$, and hence $\vp(f_1\ot\dots\ot f_s)\in \langle x\rangle$ in this case. Since $y\in\Im\vp$ we have $\vp(f^{\ot s})=\beta x+\gamma y$ for some $\beta\in\kk$ and $\gamma\in\kk^*$. Then we have
$$
fy=\frac{1}{\gamma}f\vp(f^{\ot s})=\frac{1}{\gamma}\vp(f^{\ot s})f=yf=\alpha fy,
$$
i.e. $\alpha=1$.
\end{proof}

\begin{theorem}\label{A1}
Suppose that $\Lambda=T_\kk V/(f_1, f_2)$, where $f_1$ and $f_2$ are two linearly independent elements of $V^{\ot s}$.

If $(\Lambda^!)^{(s)}\cong\kk\langle x,y\rangle/(x^2,xy,yx)=A^1$, then $\Lambda\cong\kk\langle x_1,\dots,x_m,y_1\rangle/(f,y_1^s)$
for some $f\in \kk\langle x_1,\dots,x_m,y_1\rangle_s$.
Conversely, if $\Lambda=\kk\langle x_1,\dots,x_m,y_1\rangle/(f,y_1^s)$, then $(\Lambda^!)^{(s)}\cong A^1$ if and only if there are no linear polynomial $g\in \kk\langle x_1,\dots,x_m,y_1\rangle$ and $\alpha,\beta\in\kk$ such that
either $f=\sum\limits_{i=0}^{s-1}\alpha^i\beta^{s-i-1}y_1^igy_1^{s-i-1}$ or $f=g^s+\alpha y_1^s$.
\end{theorem}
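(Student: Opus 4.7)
My plan is to prove the two directions separately, leveraging the classification results already assembled.

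For the forward direction, I invoke Lemma \ref{bimod_B1B1} to get $M\cong A/(x)\oplus\kk^m$ as $A$-bimodules. Writing $f_0$ for the generator of the $A/(x)$-summand and $e_1,\dots,e_m$ for a basis of $\kk^m$, the key observation is that $y$ commutes with $f_0$ while annihilating each $e_j$ on both sides, so any pure tensor in $M_0^{\ot s}$ that contains at least one $e_j$ is killed by $y$ on both sides (after pushing $y$ through the leading $f_0$'s). Therefore $\vp$ sends such a tensor into $\ker(y\cdot-)\cap\ker(-\cdot y)=\langle x\rangle\subset A_1$, while only $\vp(f_0^{\ot s})$ can produce a nonzero $y$-component. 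Rescaling $f_0$ so that $\vp(f_0^{\ot s})=\alpha x+y$ and dualizing via $y_1:=f_0^*$, $x_j:=e_j^*$, the ideal $\vp^*(A_1^*)$ is spanned by $y_1^s$ and $\alpha y_1^s+f$; absorbing the redundant multiple of $y_1^s$ yields the claimed presentation.

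For the converse, fix $\Lambda$ of the stated form. By Theorem \ref{impos_struc} together with Theorems \ref{A4A5}, \ref{A6A70}, \ref{A7q}, the ring $(\Lambda^!)^{(s)}$ belongs to $\{A^0,A^1,A^4,A^5,A^6,A^7(q)\}$. The case $A^0$ is excluded because the element $y\in A_1$ dual to $y_1^s$ satisfies $y\cdot y\neq 0$: indeed $(y_1^*)^{\ot 2s}$ represents $y\cdot y$ in $A_2$, and it pairs nontrivially with $y_1^{\ot 2s}\in\bigcap_{i=0}^s V^{\ot i}\ot R\ot V^{\ot (s-i)}$ (which is canonically dual to $A_2$), since $y_1^s\in R$. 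The second form of $A^4$ and the $A^5$ case are excluded by an iso-argument: in the relevant relation spaces $R'$ no $s$-th power of a linear combination of generators appears, so writing $\phi_1(y_1)=ay_1'+by_2'+\sum c_iz_i$ and expanding $\phi_1(y_1)^{\ot s}\in R'$ forces $a=b=c_i=0$ by inspecting the coefficients of $y_1'^{\ot s}$, $y_2'^{\ot s}$, and the $z_i^{\ot s}$.

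It remains to show that $(\Lambda^!)^{(s)}\cong A^4$ (first form), $A^6$, $A^7(0)$, or $A^7(q)$ with $q\neq 0$ corresponds bijectively to $f$ having the appropriate excluded form. For each such case, the analogous coefficient analysis applied to the relation space $R'$ of the corresponding algebra $\Lambda'$ from Theorems \ref{A4A5}, \ref{A6A70}, \ref{A7q} shows that any graded iso $\phi:\Lambda\to\Lambda'$ must send $y_1$ to a scalar multiple of the distinguished generator $y_1'$ of $\Lambda'$; then the preimage under $\phi_1^{\ot s}$ of the second generator of $R'$ yields $f$ of the required shape ($g^s+\alpha y_1^s$ for $A^4$, $gy_1^{s-1}$ for $A^6$, $y_1^{s-1}g$ for $A^7(0)$, and the twisted sum $\sum_i\alpha^i\beta^{s-i-1}y_1^igy_1^{s-i-1}$ with $q=(\alpha/\beta)^s$ for $A^7(q)$). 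Conversely, if $f$ already has one of these excluded forms, the linear change of variables $y_2:=g$ identifies $\Lambda$ with the presentation from the corresponding theorem, forcing $(\Lambda^!)^{(s)}\not\cong A^1$. The main technical subtlety will be handling the scalar identifications carefully in the $A^7(q)$ case, and making sure that stray $y_1^s$-terms that arise when pulling back can always be absorbed into the linear parameter $g$.
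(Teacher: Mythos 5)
Your proposal is correct and follows essentially the same route as the paper: the forward direction uses Lemma \ref{bimod_B1B1} plus the observation that $\vp$ of any tensor meeting $\kk^m$ lands in $\langle x\rangle$, so that $\vp^*f_y$ is a multiple of $y_1^s$, and the converse eliminates all other candidates for $(\Lambda^!)^{(s)}$ via Theorems \ref{A4A5}, \ref{A6A70}, \ref{impos_struc}, \ref{A7q} and excludes $A^0$ by noting $\Lambda^!_{2s}\neq 0$. The only difference is that you spell out the coefficient/isomorphism analysis showing that a presentation $(f,y_1^s)$ isomorphic to one of the classified algebras forces $f$ into an excluded form — a step the paper compresses into ``it follows from Theorems \ldots'' — and your exact scalar $q=(\alpha/\beta)^s$ in the $A^7$ case should be $(\beta/\alpha)^s$, which is immaterial to the statement being proved.
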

\begin{proof} We have $\Lambda\cong T_\kk M_0^*/(\vp^*A_1^*)$, where $(A,M,\vp)$ is an $s$-homogeneous triple. If $(\Lambda^!)^{(s)}\cong A^1$, then we may take $A=A^1$ and, by Lemma \ref{bimod_B1B1}, $M=A/(x)\oplus \kk^{m}$.

Let now $x_1,\dots,x_m,y_1\in M_0^*$ be the basis dual to a basis $f_1,\dots,f_{m+1}$ of $M_0$ whose first $m$ elements constitute a basis of the summand $\kk^{m}$ and the last one is some nonzero element of the summand $A/(x)$.
Let now $f_x,f_y$ be the basis of $A_1^*$ dual to the basis $x,y$ of $A_1$.
It follows from the proof of Lemma \ref{bimod_B1B1} that $f_y\vp(f_{i_1}\ot\dots\ot f_{i_s})=0$ if $i_k\not=m+1$ for some $1\le k\le s$, and hence $\vp^*f_y=\alpha y_1^s$ for some $\alpha\in\kk^*$. Thus, $\vp^*A_1^*=\langle f, y_1^s\rangle$ for some $f\in \kk\langle x_1,\dots,x_m,y_1\rangle_s$.

Let us now assume that $\Lambda=\kk\langle x_1,\dots,x_m,y_1\rangle/(f,y_1^s)$ for some $f\in \kk\langle x_1,\dots,x_m,y_1\rangle_s$ linearly independent with $y_1^s$. If $f=g^s+\alpha y_1^s$, then $\Lambda=\kk\langle x_1,\dots,x_m,y_1\rangle/(g^s,y_1^s)$, and hence $(\Lambda^!)^{(s)}\cong A^4$ (see Theorem \ref{A4A5}). If $f=\sum\limits_{i=0}^{s-1}\alpha^i\beta^{s-i-1}y_1^igy_1^{s-i-1}$, then $(\Lambda^!)^{(s)}\cong A^7\left(\frac{\beta}{\alpha}\right)$ if $\alpha\not=0$ and $(\Lambda^!)^{(s)}\cong A^6$ if $\alpha=0$ (see Theorems \ref{A6A70} and \ref{A7q}). Suppose now that $f$ cannot be presented in the form $f=\sum\limits_{i=0}^{s-1}\alpha^i\beta^{s-i-1}y_1^igy_1^{s-i-1}$ or in the form $f=g^s+\alpha y_1^s$.
Then it follows from Theorems \ref{A4A5}, \ref{A6A70}, \ref{impos_struc} and \ref{A7q} that $(\Lambda^!)^{(s)}\not\cong A^2(q),A^3,A^4,A^5,A^6,A^7(q),A^8,A^9$ for any $q\in\kk$. On the other hand, it is true that $\dim_\kk\Lambda^!_n\not=0$ for any $n\ge 0$, and hence $(\Lambda^!)^{(s)}\not\cong A_0$. It follows from our classification of quadratic algebras with two generators that $(\Lambda^!)^{(s)}\cong A^1$.
\end{proof}

\subsection{$s$-homogeneous triples with $A=A^0$ and $Z_2\ds_AM$}\label{CY}

Let us now consider algebras $\Lambda$ with $\FF(\Lambda^!)$ of the form $(A^0,M,\vp)$ with $M_A\cong Z_2\oplus Z_1^m$. As it was mentioned above, we have ${}_AM\cong Z_2'\oplus (Z_1')^m$ in this case. Here and until the end of this section $A$ denotes the algebra $A^0$. Let us introduce the bimodule
$$
D=(A^{\rm op}\ot A)^2/\langle yf_1,f_1y,xf_2,f_2x, f_1x-xf_1, f_2y-yf_2, f_1x-f_2y\rangle_{A^{\rm op}\ot A},
$$
where we again denote by $f_1$ and $f_2$ the standard generators of $(A^{\rm op}\ot A)^2$. Let us now prove the following lemma recovering the $A$-bimodule structure on $M$.

\begin{lemma}\label{bimod_Z2}
If $(A^0,M,\vp)$ is an $s$-homogeneous triple with $M_A\cong Z_2\oplus Z_1^m$, then $M\cong {}_{\nu}D\oplus \kk^m$ for some graded automorphism $\nu$ of $A^0$. Moreover, the map $\vp:M^{\ot_As}\rightarrow A[1]$ can be factored through the canonical projection $M^{\ot_As}\twoheadrightarrow ({}_{\nu}D)^{\ot_As}$
\end{lemma}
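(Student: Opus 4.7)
The plan is to first produce a direct bimodule decomposition $M = N \oplus \kk^m$, then recognise $N$ as a twist of the bimodule $D$, and finally use the vanishing of $\vp$ on tensors containing a trivial factor to obtain the claimed factoring.

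By Lemma \ref{Z2_ts}, ${}_AM \cong Z_2' \oplus (Z_1')^m$ in addition to $M_A \cong Z_2 \oplus Z_1^m$. Set $K := r\Ann_{M_0}(A_1)$ and $K' := l\Ann_{M_0}(A_1)$; the first is an $m$-dimensional subspace of $M_0$ coming from the $Z_1^m$ summand of $M_A$, and similarly for $K'$ and the $(Z_1')^m$ summand of ${}_AM$. Inside the right $Z_2$-summand the element $f_1 x = f_2 y$ is nonzero, so $M_0 u \ne 0$ and dually $u M_0 \ne 0$ for every nonzero $u \in A_1$. Thus Lemma \ref{ker_vp} and its left analogue force $\vp$ to vanish on every elementary tensor having a factor in $K$ or in $K'$. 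If $K \ne K'$, then $K + K'$ has codimension at most one in $M_0$, hence $\dim_\kk \Im\vp|_{M_0^{\ot s}} \le 1$, contradicting $\Im \vp = J(A)[1]$ whose degree $0$ part is $2$-dimensional. Therefore $K = K'$, and this common subspace is a sub-bimodule of $M$ isomorphic to $\kk^m$ on which $A_1$ acts trivially on both sides.

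Now pick any vector space complement $N_0$ of $K$ in $M_0$. Since $A_2 = 0$ and $A_1 K = K A_1 = 0$, the subspace $N := N_0 + M_1$ is closed under both actions and meets $K$ trivially, yielding $M = N \oplus K$ as bimodules with $N_A \cong M/K \cong Z_2$ and ${}_AN \cong Z_2'$. Choose a basis $f_1, f_2$ of $N_0$ realising the standard $Z_2$-structure on the right, so $f_1 y = 0 = f_2 x$ and $f_1 x = f_2 y$ spans the one-dimensional space $N_1$. The left action is then encoded by four scalars via
\[
x f_1 = \alpha_1 f_1 x,\quad x f_2 = \alpha_2 f_1 x,\quad y f_1 = \beta_1 f_1 x,\quad y f_2 = \beta_2 f_1 x.
\]
The matrix $\bigl(\begin{smallmatrix}\alpha_1 & \alpha_2\\ \beta_1 & \beta_2\end{smallmatrix}\bigr)$ must be invertible, since otherwise a nonzero vector in its kernel would provide an element of $N_0$ annihilated by $A_1$ on the left, contradicting ${}_AN \cong Z_2'$. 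Define a graded automorphism $\nu$ of $A^0$ by $\nu(x) := \alpha_1 x + \alpha_2 y$ and $\nu(y) := \beta_1 x + \beta_2 y$; a direct check against the defining relations of $D$ shows that the assignment $f_i \mapsto f_i$ gives a bimodule isomorphism $N \cong {}_\nu D$, and combined with $M = N \oplus K$ this yields the first claim.

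For the moreover part, the vanishing of $\vp$ on tensors containing a factor in $K$, already established above, says exactly that $\vp$ descends through the canonical projection $M^{\ot_A^s} \twoheadrightarrow N^{\ot_A^s} \cong ({}_\nu D)^{\ot_A^s}$. The main obstacle is the initial identification $K = K'$: a priori the trivial summands in the left and right decompositions of $M$ need not coincide as subspaces of $M_0$, and ruling out an excess requires genuinely combining the two sides of Lemma \ref{ker_vp} with the surjectivity of $\vp$ onto $J(A)[1]$. Once $K$ is known to be a sub-bimodule, everything else reduces to a $2 \times 2$ linear algebra computation and routine verification of the relations defining $D$.
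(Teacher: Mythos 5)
Your proof is correct and follows essentially the same route as the paper: both arguments rest on Lemma \ref{ker_vp} (and its left analogue) to show that the trivial one-sided summand is annihilated on both sides and hence splits off as a bimodule summand isomorphic to $\kk^m$, after which the two-dimensional complement is identified with ${}_{\nu}D$ and the factoring of $\vp$ follows from the same vanishing statement. The only differences are cosmetic: you obtain $K=K'$ by a codimension count on $\Im\vp|_{M_0^{\ot s}}$ where the paper argues element-wise via $uf=\vp(X)f$ and condition 2 of Definition \ref{str}, you spell out the $2\times 2$ matrix identification of the complement with ${}_{\nu}D$ that the paper dismisses as ``not difficult to see'', and (a harmless label swap, since your argument is symmetric) with the paper's conventions $r\Ann_{M_0}(A_1)$ is the annihilator relevant to the \emph{left} structure, so your $K$ and $K'$ are interchanged relative to the text's notation.
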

\begin{proof} It is not difficult to see that any two dimensional graded $A$-bimodule $K$ such that $K_A\cong Z_2$ and ${}_AK\cong Z_2'$ is isomorphic to ${}_{\nu}D$ for some graded automorphism $\nu$ of $A$.
Thus, for the first part it is enough to prove that $M$ has an $A$-bimodule summand isomorphic to $\kk^m$. Let us consider the right $A$-module decomposition $M_A=Z_2\oplus Z_1^m$.
We will prove that in fact $Z_1^m$ is an $A$-bimodule summand isomorphic to $\kk^m$. Let us take some $f\in Z_1^m$. Suppose that $uf\not=0$ for some $u\in A_1$. Note that then we have $0\not=\vp(X)f=(1_M\ot_A\vp)(X\ot f)$ for some $X\in M_0^{\ot s}$; in particular, $\vp(Y\ot f)\not=0$ for some $Y\in M_0^{\ot(s-1)}$. On the other hand, $\vp(Y_1\ot f\ot Y_2)=0$ for any $1\le i\le s$, $Y_1\in M_0^{\ot (i-1)}$, $Y_2\in M_0^{\ot (s-i)}$,  and  $f\in Z_1^m$ by Lemma \ref{ker_vp}. Note that the last mentioned assertion implies the second statement of the lemma too.
\end{proof}

%Using the Jordan canonical form, we  get the following corollary.

%\begin{coro}
%If $(A^0,M,\vp)$ is an $s$-homogeneous triple with $M_A\cong Z_2\oplus Z_1^m$, then $M\cong {}_{\nu}D\oplus \kk^m$, where $\nu$ is a graded automorphism of $A^0$ such that either $\nu(x)=\alpha x$ and $\nu(y)=\beta y$ or $\nu(x)=\alpha x+y$ and $\nu(y)=\alpha y$ for some $\alpha,\beta\in\kk^*$.
%\end{coro}

\begin{theorem}\label{A0Z}
\begin{enumerate}
\item Suppose that $\Lambda=T_\kk V/(f_1, f_2)$, where $f_1$ and $f_2$ are two linearly independent elements of $V^{\ot s}$.
If $(\Lambda^!)^{(s)}\cong\kk\langle x,y\rangle/(x,y)^2=A^0$ and $(\Lambda^!)^{(s,1)}\cong Z_2\oplus Z_1^m$ as a right $A^0$-module, then $\Lambda\cong \kk\langle x_1,\dots,x_m\rangle*\bD(w)$ for some twisted potential $w$ of degree $s+1$ on a two dimensional space.

\item Suppose that $\Lambda=\kk\langle x_1,\dots,x_m\rangle*\bD(w)$ for a $\nu$-twisted potential $w$ of degree $s+1$ on the two dimensional space $U$. Then $(\Lambda^!)^{(s)}\cong A^0$ and $(\Lambda^!)^{(s,1)}\cong Z_2\oplus Z_1^m$ as a right $A^0$-module if and only if the space $\{(f\ot 1_{U^{\ot s}})w\mid f\in U^*\}$ is two dimensional and there are no nonzero $f_1,f_2\in U^*$ and $u_1,u_2\in U$ such that one of the following conditions holds:
\begin{itemize}
\item $s=2t+1$, $(f_1\ot 1_{U^{\ot s}})w=(u_1\ot u_2)^{\ot t}\ot u_1$, and $(f_2\ot 1_{U^{\ot s}})w=(u_2\ot u_1)^{\ot t}\ot u_2$;
\item $s=2t$, $(f_1\ot 1_{U^{\ot s}})w=(u_1\ot u_2)^{\ot t}$, and $(f_2\ot 1_{U^{\ot s}})w=(u_2\ot u_1)^{\ot t}$.
\end{itemize}
\end{enumerate}
\end{theorem}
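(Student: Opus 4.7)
The plan is to use Lemma \ref{bimod_Z2} as the starting point for Part 1, which gives $M \cong {}_\nu D \oplus \kk^m$ and a factorization of $\vp$ through the canonical projection $\pi \colon M^{\ot_A s} \twoheadrightarrow ({}_\nu D)^{\ot_A s}$. Set $U := D_0$ with basis $f_1, f_2$, let $y_1, y_2 \in M_0^*$ denote the dual vectors, and complete to a basis $y_1, y_2, x_1, \ldots, x_m$ of $M_0^*$ by a basis of $(\kk^m)^*$. The factorization of $\vp$ forces $\vp^*(A_1^*) \subset (U^*)^{\ot s} = \kk\langle y_1, y_2\rangle_s$ inside $(M_0^*)^{\ot s}$, so the $x_i$'s appear in no relation of $\Lambda = T(M_0^*)/(\vp^* A_1^*)$, which gives $\Lambda \cong \kk\langle x_1, \ldots, x_m\rangle * \Gamma$ with $\Gamma := \kk\langle y_1, y_2\rangle/(\vp^* A_1^*)$.

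To produce a twisted potential $w$ with $\bD(w) = \Gamma$, fix a generator $\eta$ of the one-dimensional space $({}_\nu D)_1$ and define $w \in (U^*)^{\ot(s+1)}$ by letting $w(g_0 \ot \cdots \ot g_s)$ be the coefficient of $\eta$ in $g_0 \cdot \bar\vp_0(g_1 \ot \cdots \ot g_s)$, where $\cdot$ denotes the right action on ${}_\nu D$ and $\bar\vp_0 \colon U^{\ot s} \to A_1$ is the map induced by $\vp$. A direct inspection of the defining relations of $D$ shows the non-degeneracy of the pairing $U \ot A_1 \to \kk\eta$, $(g, a) \mapsto g \cdot a$, so the two-dimensional subspace $\{(g \ot 1^{\ot s})w : g \in U\}$ of $(U^*)^{\ot s}$ agrees, up to a non-singular base change $A_1^* \to U$, with $\vp^*(A_1^*)$; hence $\Gamma = \bD(w)$. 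For the twist, observe that the defining relations of $D$ yield the symmetry $a \cdot_D f = f \cdot_D a$ for $a \in A_1, f \in U$, so in ${}_\nu D$ one has $a \cdot_\ell f = f \cdot_r \nu(a)$. Combined with Definition \ref{str}(2), the invariance $\phi_\sigma(w) = w$ reduces to a linear identity $f \cdot_D a = \sigma^T(f) \cdot_D \nu(a)$ in $D_1$, admitting a unique solution $\sigma$ determined by $\nu$ and the pairing.

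For Part 2, given $\Lambda = \kk\langle x_1, \ldots, x_m\rangle * \bD(w)$, the relations of $\Lambda$ are exactly the contractions $\{(f \ot 1^{\ot s})w : f \in U^*\}$, so $\Lambda$ has two independent relations if and only if this space is two-dimensional. Assuming so, $(\Lambda^!)^{(s)}$ is a quadratic algebra with two generators; combining Lemma \ref{class}, the impossibility result Theorem \ref{impos_struc}, the explicit classifications in Theorems \ref{A4A5}, \ref{A6A70}, \ref{A7q}, \ref{A1}, together with Part 1 applied to identify potential-type algebras, the only remaining possibilities for $\Lambda$ of this form are $(\Lambda^!)^{(s)} \in \{A^0, A^4, A^5\}$. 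By the computations in the Example following Theorem \ref{A4A5}, the listed exclusion conditions on $w$, after the substitution $u_1 \leftrightarrow y_1, u_2 \leftrightarrow y_2$, pick out exactly those $w$ for which $\bD(w)$ is one of the algebras appearing in Theorem \ref{A4A5}; excluding these leaves $(\Lambda^!)^{(s)} \cong A^0$, and the bimodule structure $(\Lambda^!)^{(s,1)} \cong Z_2 \oplus Z_1^m$ then follows from the analysis of this subsection. The main obstacle will be Part 1's verification of the twisted potential property: one must carefully track the Nakayama automorphism $\nu$ encoded in ${}_\nu D$ and check that the higher associativity of Definition \ref{str}(2) translates precisely to the cyclic-shift invariance of $w$ under the twist $\sigma$.
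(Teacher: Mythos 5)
Your Part 1 follows essentially the same route as the paper (factor $\vp$ through $({}_{\nu}D)^{\ot_A s}$ via Lemma \ref{bimod_Z2}, split off the free factor, build $w$ from the nondegenerate pairing $D_0\ot A_1\to D_1$, and deduce cyclicity from item 2 of Definition \ref{str}), and is fine in outline. The problem is in Part 2, where your key claim --- that after invoking Lemma \ref{class}, Theorem \ref{impos_struc} and the classifications, ``the only remaining possibilities for $\Lambda$ of this form are $(\Lambda^!)^{(s)}\in\{A^0,A^4,A^5\}$,'' so that the exclusion list merely removes the algebras of Theorem \ref{A4A5} --- is false. For $\lambda\in\kk^*$ with $\lambda^{s+1}=1$, the element $w=\sum_{i=0}^{s}\lambda^iy_1^iy_2y_1^{s-i}$ is a $\sigma$-twisted potential (with $\sigma(y_1)=\lambda y_1$, $\sigma(y_2)=\lambda^{-s}y_2$) whose contraction space is the two-dimensional span of $y_1^s$ and $\sum_{i=1}^{s}\lambda^iy_1^{i-1}y_2y_1^{s-i}$, and by Theorem \ref{A7q} the algebra $\kk\langle x_1,\dots,x_m\rangle*\bD(w)$ has $(\Lambda^!)^{(s)}\cong A^7(q)$, not $A^0$. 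The exclusion conditions in the statement are designed to catch exactly such cases through the sub-case where $u_1$ and $u_2$ are \emph{linearly dependent}: then one contraction of $w$ is an $s$-th power of a linear form, and by Theorems \ref{A4A5}, \ref{A6A70}, \ref{A7q} and \ref{A1} this characterizes $(\Lambda^!)^{(s)}\in\{A^1,A^4,A^6,A^7(q)\}$. Reading the conditions as excluding only the Theorem \ref{A4A5} algebras (i.e.\ only the linearly independent case) leaves your ``if'' direction unproved.

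There is a second gap at the end: the assertion that $(\Lambda^!)^{(s,1)}\cong Z_2\oplus Z_1^m$ ``follows from the analysis of this subsection'' is circular. Lemma \ref{bimod_Z2} takes $M_A\cong Z_2\oplus Z_1^m$ as a hypothesis; it does not rule out that a free product with a potential algebra satisfying $(\Lambda^!)^{(s)}\cong A^0$ has instead $M_A\cong B_1(\alpha,\beta)\oplus Z_1^{m+1}$ or $M_A\cong Z_1^{m+2}$. A separate argument is required: writing $r_1,r_2$ for the contractions and $R$ for their span, one notes that $w=y_1r_1+y_2r_2=r_1\nu^{-1}(y_1)+r_2\nu^{-1}(y_2)$ is a nonzero element of $(U\ot R)\cap(R\ot U)$, which forbids $\Lambda^!_{s+1}=0$ (the case $M\cong\kk^{m+2}$), and a short computation with $R^{\perp}$ shows that a right summand $B_1(\alpha,\beta)$ would force $g(y_1)f(r_1)+g(y_2)f(r_2)=0$ for all $g\in U^*$ and some $f$ with $(f(r_1),f(r_2))\neq(0,0)$, a contradiction.
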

\begin{proof} Due to Lemma \ref{bimod_Z2}, we may set $M={}_{\nu^{-1}}D\oplus \kk^m$ for some automorphsm $\nu$ of $A$. As usually, we have $\Lambda=T_\kk M_0^*/(\vp^*A_1^*)$. Let $f_1,f_2$ be the standard basis of ${}_{\nu^{-1}}D$ and $g_1,\dots,g_m$ be a basis of $\kk^m$ considered as direct summands of $M$. Let  $y_1,y_2,x_1,\dots,x_m$ be the basis of $M_0^*$ dual to the obtained basis of $M_0$ and $U$ be the subspace of $M_0^*$ spanned by $y_1$ and $y_2$. It follows from Lemma \ref{bimod_Z2} that $\vp^*A_1^*\subset U^{\ot s}$. Thus, $\Lambda=\kk\langle x_1,\dots,x_m\rangle*T_\kk U/(\vp^*A_1^*)$ and it remains to show that $T_\kk U/(\vp^*A_1^*)=\bD(w)$ for some twisted potential $w$ of degree $s+1$ on $U$.  Let $f_x,f_y$ be the basis of $A_1^*$ dual to $x,y$. Let us define the twisted potential $w$ by the equality $w=y_1\vp^*(f_x)+y_2\vp^*(f_y)$. We can see that
$$\bD(w)=\kk\langle y_1,y_2\rangle/\big((f_1\ot 1_{U^{\ot s}})w,(f_2\ot 1_{U^{\ot s}})w\big)=\kk\langle y_1,y_2\rangle/\big(\vp^*(f_x),\vp^*(f_y)\big)=T_\kk U/(\vp^*A_1),$$ 
and hence, to finish the proof of the first part, it remains to show that $w$ is a $\nu$-twisted potential, where $\nu:U\rightarrow U$ is the composition $U\xrightarrow{\theta}A_1\xrightarrow{\nu}A_1\xrightarrow{\theta^{-1}}U$ with the isomorphism $\theta:U\rightarrow A_1$ defined by the equalities $\theta(y_1)=x$ and $\theta(y_2)=y$. To show that $w$ is a $\nu$-twisted potential it suffices to check the equality
$y_1\vp^*(f_x)+y_2\vp^*(f_y)=\vp^*(f_x)\nu^{-1}(y_1)+\vp^*(f_y)\nu^{-1}(y_2)$. To see this, let us pick some $X\in ({}_{\nu^{-1}}D)^{\ot(s+1)}_0$ and apply both parts of the required equality to it. Direct calculations show that
\begin{multline*}
    \big(y_1\vp^*(f_x)+y_2\vp^*(f_y)\big)(X)f_1x=(1_M\ot_A\vp)X\\
    =(\vp\ot_A1_M)X=\big(\vp^*(f_x)\nu^{-1}(y_1)+\vp^*(f_y)\nu^{-1}(y_2)\big)(X)f_1x,
\end{multline*}
i.e. the applications of $y_1\vp^*(f_x)+y_2\vp^*(f_y)$ and $\vp^*(f_x)\nu^{-1}(y_1)+\vp^*(f_y)\nu^{-1}(y_2)$ give the same result on any element $X\in ({}_{\nu^{-1}}D)^{\ot(s+1)}_0$. Thus, the required equality is proved and $w$ is a $\nu$-twisted potential.

Let us now suppose that $\Lambda=\kk\langle x_1,\dots,x_m\rangle*\bD(w)$ for a $\nu$-twisted potential $w$ of degree $s+1$ on the two dimensional space $U$. Then $\Lambda$ is an $s$-homogeneous algebra with defining ideal generated by $\dim_\kk\{(f\ot 1_{U^{\ot s}})w\mid f\in U^*\}$ elements of degree $s$. Thus, the quadratic algebra $(\Lambda^!)^{(s)}$ has two generators if and only if the last mentioned dimension equals $2$. Note that one of the conditions  listed in the theorem is satisfied for some $f_1,f_2\in U^*$ and linearly dependent $u_1,u_2\in U$ if and only if $\Lambda\cong\kk\langle x_1,\dots,x_m,y_1,y_2\rangle/(y_2^s,f)$ for some homogeneous polynomial $f\in\kk\langle y_1,y_2\rangle_s$. Then it follows from Theorems \ref{A4A5}, \ref{A6A70}, \ref{A7q}, and \ref{A1} that $f_1,f_2\in U^*$ and $u_1,u_2\in U$ satisfying the conditions listed in the theorem exist if and only if $(\Lambda^!)^{(s)}$ is isomorphic to one of the algebras $A^1$, $A^4$, $A^5$, $A^6$, and $A^7(q)$ ($q\in\kk$). Since $(\Lambda^!)^{(s)}\not\cong A^2(q),A^3,A^8,A^9(q)$ for any $q\in\kk$ by Theorem \ref{impos_struc}, we have $(\Lambda^!)^{(s)}\cong A^0$ if and only if the space $\{(f\ot 1_{U^{\ot s}})w\mid f\in U^*\}$ is two dimensional and there are no $f_1,f_2\in U^*$ and $u_1,u_2\in U$ satisfying one of the conditions listed in the theorem.

It remains to show that if $\Lambda=\kk\langle x_1,\dots,x_m\rangle*\bD(w)$ for a $\nu$-twisted potential $w$ of degree $s+1$ on the two dimensional space $U$ and $(\Lambda^!)^{(s)}= A^0$, then $M=(\Lambda^!)^{(s,1)}$ cannot be isomorphic to $B_1(\alpha,\beta)\oplus Z_1^{m+1}$ ($(\alpha,\beta)\in\mathbb{P}^1$) or $Z_1^{m+2}$ as a right $A^0$-module. Let $y_1,y_2$ be a basis of $U$ and $f_1,f_2$ be the basis of $U^*$ dual to it. Note that by definition $\bD(w)=\kk\langle y_1,y_2\rangle/(R)$, where $R$ is the subspace of $U^{\ot s}$ generated by $r_1=(f_1\ot 1_{U^{\ot s}})w$ and $r_2=(f_2\ot 1_{U^{\ot s}})w$. Note that $w=y_1r_1+y_2r_2=r_1\nu^{-1}(y_1)+r_2\nu^{-1}(y_2)\in (U\ot R)\cap (R\ot U)$. On the other hand, $M_A\cong Z_1^{m+2}$ means $\Lambda^!_{s+1}=0$, and hence $U^*\ot R^{\perp}+R^{\perp}\ot U^*=(U^*)^{\ot(s+1)}$ and $(U\ot R)\cap (R\ot U)=0$. Here, as usually, $R^{\perp}=\{f\in(U^*)^{\ot s}\mid f(R)=0\}$. At the same time,  $M_A\cong B_1(\alpha,\beta)\oplus Z_1^{m+1}$ means that there is $f\in (U^*)^{\ot s}$ such that $f(r_1)$ or $f(r_2)$ is nonzero, but $g\ot f\in U^*\ot R^{\perp}+R^{\perp}\ot U^*$ for any $g\in U^*$. Since $h\big((U\ot R)\cap (R\ot U)\big)=0$ for any $h\in U^*\ot R^{\perp}+R^{\perp}\ot U^*$, we have $0=(g\ot f)(w)=g(y_1)f(r_1)+g(y_2)f(r_2)$ for any $g\in U^*$. This obviously contradicts to the fact that $f(r_1)$ and $f(r_2)$ cannot be zero simultaneously, and thus $M_A\not\cong B_1(\alpha,\beta)\oplus Z_1^{m+1}$. The theorem is proved.
\end{proof}

Let us now describe the automorphisms $\nu$ of the algebra $A^0$ for which there is an $s$-homogeneous triple with $(A,M)=(A^0,{}_{\nu^{-1}} D)$. As it was proved above, this is equivalent to the existence of $\nu$-twisted potential $w$ of degree $s+1$ on a two dimensional space with the basis $y_1,y_2$ that does not satisfy conditions listed in item 2 of Theorem \ref{A0Z}.
Modulo an automorphism of $A^0$ that induces an isomorphism of the corresponding $s$-homogeneous triples, we may assume that either $\nu(y_1)=\lambda_1y_1$ and $\nu(y_2)=\lambda_2 y_2$ for some $\lambda_1,\lambda_2\in\kk^*$ or $\nu(y_1)=\lambda y_1$ and $\nu(y_2)=\lambda (y_1+y_2)$ for some $\lambda\in\kk^*$. 
As usually, we consider only the case $s>2$ while there are no such potentials in the case $s=2$, for example, due to the classification \cite[Theorem 3.10.6]{Ufn}.

\begin{lemma}\label{CYs}
Suppose that $\nu(y_1)=\lambda_1y_1$ and $\nu(y_2)=\lambda_2 y_2$ for some $\lambda_1,\lambda_2\in\kk^*$. An $s$-homogeneous algebra $\Lambda$ such that $\FF(\Lambda^!)=(A^0,{}_{\nu^{-1}} D,\vp)$ for some $\vp$ exists if and only if one of the following conditions holds:
\begin{itemize}
    \item there is some $2\le k\le s-1$ such that $\lambda_1^k\lambda_2^{s+1-k}=1$;
    \item $\lambda_1\lambda_2^s=\lambda_1^s\lambda_2=1$;
    \item $\lambda_1=1$, $\lambda_2^{s}=1$ (or $\lambda_1^{s}=1$, $\lambda_2=1$).
\end{itemize}
\end{lemma}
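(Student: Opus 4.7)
The plan is to invoke Theorem~\ref{A0Z}, which reduces the existence of $\Lambda$ with $\FF(\Lambda^!) = (A^0, {}_{\nu^{-1}}D, \vp)$ to the existence of a $\nu$-twisted potential $w \in U^{\ot(s+1)}$ on $U = \langle y_1, y_2\rangle$ such that the relation space $R = \{(f \ot 1_{U^{\ot s}})w \mid f \in U^*\}$ is two-dimensional and the pair $(r_1, r_2) := ((f_1 \ot 1)w, (f_2 \ot 1)w)$ avoids the two exceptional configurations listed there. Combining with Theorems~\ref{A4A5}, \ref{A6A70}, \ref{impos_struc}, \ref{A7q}, and \ref{A1}, having $(\Lambda^!)^{(s)} \cong A^0$ is equivalent to $(r_1, r_2)$ not matching, under any graded change of basis on $U$, the explicit forms which classify the first component as $A^1, A^4, A^5, A^6$, or $A^7(q)$.

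The first step is to describe the space of $\nu$-twisted potentials. Since $\nu$ is diagonal, $\phi_\nu$ sends each standard monomial $y_I = y_{i_1} \ot \cdots \ot y_{i_{s+1}}$ to a cyclic rotation multiplied by the scalar $\lambda_{i_{s+1}}$, so it preserves the bidegree $(k, s+1-k)$ (where $k$ is the number of $y_1$'s in $I$) and the cyclic orbit of $y_I$. On the span of an orbit $\mathcal{O}$ of size $d$ with $d \mid s+1$, the fixed-point equation has a one-dimensional solution space spanned by an explicit twisted orbit sum $w_\mathcal{O}$ precisely when the product of the $\lambda$-factors over one full pass around $\mathcal{O}$ equals $1$. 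For generic orbits (those of size $s+1$) of bidegree $(k, s+1-k)$ this condition reduces to $\lambda_1^k \lambda_2^{s+1-k} = 1$; for the size-one orbit of a pure monomial $y_i^{s+1}$ it is the stronger condition $\lambda_i = 1$.

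For the sufficiency direction I would construct a suitable potential in each case: (i) if some $2 \le k \le s-1$ satisfies $\lambda_1^k \lambda_2^{s+1-k} = 1$, take $w = w_\mathcal{O}$ for a generic orbit of that bidegree (one exists by a counting argument), so every monomial of $r_1$ and $r_2$ has strictly positive multiplicities of both $y_1$ and $y_2$, ruling out all exceptional forms (each of which contains a pure $s$-th power or a strictly alternating product $(u_1 u_2)^t$); (ii) if $\lambda_1 \lambda_2^s = \lambda_1^s \lambda_2 = 1$, take the sum of the generic orbit sums of bidegrees $(1, s)$ and $(s, 1)$, so that $r_1$ and $r_2$ simultaneously contain $y_1^s$, $y_2^s$ and mixed cyclic sums whose coefficients cannot be aligned to any exceptional form; (iii) if $\lambda_1 = 1$ and $\lambda_2^s = 1$, combine the pure potential $y_1^{s+1}$ (available since $\lambda_1 = 1$) with the bidegree-$(1,s)$ generic orbit sum (available since $\lambda_1 \lambda_2^s = \lambda_2^s = 1$) to obtain $r_1 = a y_1^s + b y_2^s$ together with $r_2$ a twisted cyclic sum, which again avoids all exceptional forms for $a, b$ both nonzero.

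For the converse, assume none of the three conditions holds. Then generic orbits contribute only in bidegrees $k \in \{0, 1, s, s+1\}$ (by failure of $(\star)$), pure orbits $y_i^{s+1}$ contribute only when $\lambda_i = 1$ (which together with the failure of $(\star\star\star)$ severely restricts which pure potentials can combine with bidegree-$(1, s)$ or $(s, 1)$ generic orbit sums), and the failure of $(\star\star)$ further forbids having both the $(1, s)$ and $(s, 1)$ generic orbit potentials. A finite case analysis over the remaining subcases--distinguishing whether $\lambda_1 = 1$, $\lambda_2 = 1$, $\lambda_1 \lambda_2^s = 1$, or $\lambda_1^s \lambda_2 = 1$ holds--shows that any resulting $\nu$-twisted potential either has at most one-dimensional relation space or yields $(r_1, r_2)$ in one of the exceptional forms for $A^4, A^6$, or $A^7(q)$, forcing $(\Lambda^!)^{(s)} \not\cong A^0$. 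The main obstacle is this exhaustive check in the converse; in each subcase one must carefully verify that no linear combination of the few available orbit sums, after any admissible change of basis on $U$, produces a relation pair outside the known exceptional families.
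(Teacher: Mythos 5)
Your proposal is correct and follows essentially the same route as the paper: decompose the fixed space of $\phi_\nu$ along cyclic orbits of monomials (the paper phrases this via eigenvectors of $\nu^{\ot(s+1)}$ together with the observation that $y_i^{s+1}$ can occur only when $\lambda_i=1$), exhibit explicit orbit-sum potentials in each of the three cases, and for the converse show that the only surviving potentials force the relation pair into the exceptional forms of Theorem \ref{A0Z}. The deferred verifications (that the constructed potentials avoid the exceptional forms, and the finite case analysis in the converse) are exactly the checks the paper also leaves largely to the reader.
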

\begin{proof} Suppose that $w$ is a $\nu$-twisted potential of degree $s+1$. Note first that $w=\phi_{\nu}^{s+1}(w)=\nu^{\ot (s+1)}(w)$. Thus, $\nu^{\ot (s+1)}$ has to have an eigenvalue equal to $1$. Thus, $\lambda_1^k\lambda_2^{s+1-k}=1$ for some $0\le k\le s+1$. Note that $\lambda_1^{s+1}$ and $\lambda_2^{s+1}$ correspond  to the eigenvectors $y_1^{s+1}$ and $y_2^{s+1}$ respectively. Note also that the vector $y_1^{s+1}$ can appear only in the case where $\lambda_1=1$ while $y_2^{s+1}$ can appear only in the case where $\lambda_2=1$. Thus, if none of the conditions of the lemma holds, then either $w=ay_1^{s+1}+b\sum\limits_{i=0}^{s+1}\lambda_1^iy_1^iy_2y_1^{s-i}$ or $w=ay_2^{s+1}+b\sum\limits_{i=0}^{s+1}\lambda_2^iy_2^iy_1y_2^{s-i}$. In both cases one can show that $w$ satisfies one of the conditions listed in item 2 of Theorem \ref{A0Z}, and hence does not determine an $s$-homogeneous triple of the required form.

If we have $\lambda_1^k\lambda_2^{s+1-k}=1$ for some $2\le k\le s-1$, then the required $\nu$-twisted potential is $w=\sum\limits_{i=0}^{k-1}\lambda_1^iy_1^iy_2^{s+1-k}y_1^{k-i}+\lambda_1^k\sum\limits_{i=0}^{s+1-k}\lambda_2^iy_2^iy_1^ky_2^{s+1-k-i}$.
If $\lambda_1\lambda_2^s=\lambda_1^s\lambda_2=1$ and $\lambda_1^k\lambda_2^{s+1-k}\not=1$ for any $2\le k\le s-1$, then the required $\nu$-twisted potential is $w=\sum\limits_{i=0}^{s+1}\lambda_1^iy_1^iy_2y_1^{s-i}+\sum\limits_{i=0}^{s+1}\lambda_2^iy_2^iy_1y_2^{s-i}$.
If $\lambda_1=1$ and $\lambda_2^{s}=1$, then the required $\nu$-twisted potential is $w=y_1^{s+1}+\sum\limits_{i=0}^{s}\lambda_2^iy_2^iy_1y_2^{s-i}$. It is easy to see that the listed potentials do not satisfy the conditions listed in item 2 of Theorem \ref{A0Z}.
\end{proof}

\begin{lemma}\label{CYns}
Suppose that $\nu(y_1)=\lambda y_1$ and $\nu(y_2)=\lambda(y_1+y_2)$ for some $\lambda\in\kk^*$. If $s>3$, then an $s$-homogeneous algebra $\Lambda$ such that $\FF(\Lambda^!)=(A^0,{}_{\nu^{-1}} D,\vp)$ for some $\vp$ exists if and only if $\lambda^{s+1}=1$. A $3$-homogeneous algebra $\Lambda$ satisfying the same condition exists if and only if $\lambda^2=1$.
\end{lemma}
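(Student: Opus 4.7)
The plan is to mimic Lemma \ref{CYs}. By Theorem \ref{A0Z}, the existence of an $s$-homogeneous algebra $\Lambda$ with $\FF(\Lambda^!)=(A^0,{}_{\nu^{-1}} D,\vp)$ for some $\vp$ is equivalent to the existence of a $\nu$-twisted potential $w\in U^{\otimes(s+1)}$ (where $U=\kk y_1+\kk y_2$) whose relation space $\{(f\otimes 1_{U^{\otimes s}})w\mid f\in U^*\}$ is two-dimensional and does not match any of the alternating patterns listed in Theorem \ref{A0Z}. Hence I first determine when any $\nu$-twisted potential exists, and then inspect the combinatorial conditions on the relations.

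The necessity of $\lambda^{s+1}=1$ is immediate: if $0\neq w$ satisfies $\phi_\nu(w)=w$, then $w=\phi_\nu^{s+1}(w)=\nu^{\otimes(s+1)}(w)$, and since $\nu$ has $\lambda$ as its only eigenvalue on $U$, the operator $\nu^{\otimes(s+1)}$ has $\lambda^{s+1}$ as its only eigenvalue on $U^{\otimes(s+1)}$. Assuming $\lambda^{s+1}=1$, write $\nu=\lambda(I+N)$ with $Ny_1=0$ and $Ny_2=y_1$. The space $E\subset U^{\otimes(s+1)}$ of fixed points of $\nu^{\otimes(s+1)}$ then equals the fixed subspace of the unipotent operator $(I+N)^{\otimes(s+1)}$; a Jordan decomposition of $J_2^{\otimes(s+1)}$ via Clebsch--Gordan for $\mathfrak{sl}_2$ gives $\dim E=\binom{s+1}{\lfloor(s+1)/2\rfloor}$. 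Since $\phi_\nu$ preserves $E$ and $(\phi_\nu|_E)^{s+1}=\mathrm{id}$, the operator $\phi_\nu|_E$ is semisimple with $(s+1)$-th roots of unity as eigenvalues, and the $\nu$-twisted potentials of degree $s+1$ are precisely the $1$-eigenvectors of $\phi_\nu|_E$.

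For $s>3$ with $\lambda^{s+1}=1$, the inequality $\binom{s+1}{\lfloor(s+1)/2\rfloor}\geq 10$ leaves ample room to produce a good potential. I propose to construct one as a symmetrization $w=\sum_{j=0}^{s}\phi_\nu^j(v)$ of a seed $v\in E$ chosen to contain two well-separated $y_2$-occurrences (so that no relation is proportional to a pure power $u^{\otimes s}$) and to break the alternation pattern $y_1 y_2 y_1 y_2\cdots$ (so that the relations are not of the form $(u_1 u_2)^t u_1,(u_2 u_1)^t u_2$ for independent $u_i$), then verify by direct coefficient comparison that the resulting relation space is two-dimensional and matches neither excluded pattern. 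For $s=3$ the analysis is finite: on the explicit $6$-dimensional basis of $E$ formed by $y_1^4$, the three differences $m_i-m_{i+1}$ with $m_i=y_1^{i-1}y_2 y_1^{4-i}$, and two Pl\"ucker-type $\mathrm{SL}_2$-invariants $[y_{12}][y_{34}]$, $[y_{13}][y_{24}]$, I compute the matrix of $\phi_\nu|_E$; it is block upper triangular, with the $4\times 4$ block having characteristic polynomial $(x-\lambda)(x+\lambda)(x^2+\lambda^2)$ and the $2\times 2$ block $x^2-\lambda^2$. Thus $1$ is an eigenvalue of $\phi_\nu|_E$ with multiplicity $2$ when $\lambda^2=1$ and with multiplicity $1$ when $\lambda^4=1$ but $\lambda^2\neq 1$ (i.e., $\lambda=\pm i$). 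In the two-dimensional case one selects a linear combination of the two fixed vectors whose relations avoid both bad patterns (verified in the same style as the $s>3$ case); in the one-dimensional case the unique (up to scalar) fixed element turns out to have $(f_2\otimes 1)w$ proportional to $y_1^3$, forcing $y_1^3$ into the relation space and triggering the bad pattern of Theorem \ref{A0Z} with $u_1=u_2=y_1$, so that no good potential exists.

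The main obstacle is the $s>3$ construction: the seed $v$ must simultaneously be $\nu^{\otimes(s+1)}$-fixed, yield two linearly independent relations after symmetrization, and avoid both excluded families for every admissible value of $\lambda$ with $\lambda^{s+1}=1$. A short case split between $\lambda=1$ (the purely unipotent situation, where $\phi_\nu$-fixedness requires extra $y_1$-corrections) and $\lambda\neq 1$ is natural, analogous to the case distinction in the proof of Lemma \ref{CYs}.
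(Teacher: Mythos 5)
Your reduction of the problem to the existence of a suitable $\nu$-twisted potential, and your necessity argument ($w=\phi_\nu^{s+1}(w)=\nu^{\ot(s+1)}(w)$, whose only eigenvalue is $\lambda^{s+1}$), coincide exactly with the paper's. But the substance of the lemma is the converse direction, and there your text is a plan rather than a proof. For $s>3$ you never exhibit a seed $v$, never show the symmetrization $\sum_{j=0}^{s}\phi_\nu^j(v)$ is nonzero, and never verify that its relation space is two-dimensional and avoids the excluded patterns of Theorem \ref{A0Z}; you explicitly defer all of this ("I propose to construct\dots then verify\dots", "the main obstacle is the $s>3$ construction"). The paper's proof consists precisely of writing down explicit potentials and checking them, with a genuine three-way case split ($\lambda^2+1\not=0$; $\charr\kk=2$ and $\lambda=1$; $\lambda^2=-1$ and $\charr\kk\not=2$, where one uses $\lambda^3+1\not=0$ and $4\mid s+1$). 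Without the explicit formulas the sufficiency claim is unsupported.

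Two further points. First, your structural claims about $E$ are characteristic-sensitive: the identity $(\phi_\nu|_E)^{s+1}=\mathrm{id}$ does \emph{not} imply semisimplicity of $\phi_\nu|_E$ when $\charr\kk$ divides $s+1$, and the Clebsch--Gordan count $\dim E=\binom{s+1}{\lfloor (s+1)/2\rfloor}$ is an $\mathfrak{sl}_2$ statement in characteristic $0$; the paper works over an arbitrary algebraically closed field and is forced to treat $\charr\kk=2$ separately. Second, your $s=3$ analysis has the right shape (it parallels the paper's direct computation of the eigenvectors of $\nu^{\ot 4}$ for $\lambda^2=-1$, and your conclusion that the unique fixed potential forces $y_1^3$ into the relation space matches the paper's), but the characteristic polynomials of the blocks of $\phi_\nu|_E$ and the final selection of a "good" combination for $\lambda=\pm1$ are asserted, not computed. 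As written, the proposal establishes only the "only if" half of the lemma.
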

\begin{proof} Suppose that $w$ is a $\nu$-twisted potential of degree $s+1$. Note that all the eigenvalues of $\nu^{\ot (s+1)}$ are equal to $\lambda^{s+1}$, and hence $\lambda^{s+1}=1$ (see the proof of Lemma \ref{CYs}).

Let us consider the case $s=3$. If $\lambda=1,-1$, then the required $\nu$-twisted potential is
$$w=y_1^2y_2^2+y_2^2y_1^2+\lambda(y_1y_2^2y_1+y_2y_1^2y_2)+(\lambda-1)\big((y_1y_2)^2+(y_2y_1)^2\big)+y_1y_2y_1^2-y_1^2y_2y_1.$$
Suppose now that $\lambda^2=-1\not=1$. Direct calculations show that any eigenvector of $\nu^{\ot 4}$ is a linear combination of $y_1^4$, $\sum\limits_{i=0}^3a_iy_1^iy_2y_1^{3-i}$ with $\sum\limits_{i=0}^3a_i=0$, $y_1^2y_2^2+y_2^2y_1^2-y_1y_2^2y_1-y_2y_1^2y_2$ and $y_1y_2^2y_1+y_2y_1^2y_2-(y_1y_2)^2-(y_2y_1)^2$.
In particular, $(y_1y_2)^2$ and $(y_2y_1)^2$ have equal coefficients in $w$ and the coefficient of $(y_2y_1)^2$ in $\phi_{\nu}(w)$ equals to the coefficient of $(y_1y_2)^2$ in $w$ multiplied by $\lambda$. Thus, the coefficient of $y_1y_2^2y_1+y_2y_1^2y_2-(y_1y_2)^2-(y_2y_1)^2$ in the linear combination mentioned above is zero. Then the coefficient of $y_2y_1^2y_2$ in $w$ equals to the coefficient of $y_1^2y_2^2$ in $w$ multiplied by $-1$ and, at the same time, the coefficient of $y_2y_1^2y_2$ in $\phi_{\nu}(w)$ equals to  the coefficient of $y_1^2y_2^2$ in $w$ multiplied by $\lambda$. Thus, $w$ is a linear combination of $y_1^4$, $\sum\limits_{i=0}^3a_iy_1^iy_2y_1^{3-i}$ with $\sum\limits_{i=0}^3a_i=0$. Then it is easy to see that $w=a\big(y_1^4+(1-\lambda)\sum\limits_{i=0}^3\lambda^iy_1^iy_2y_1^{3-i}\big)$ for some $a\in\kk$, and hence satisfies one of the conditions listed in item 2 of Theorem \ref{A0Z}.

Suppose now that $s>3$ and $\lambda^{s+1}=1$. To define a $\nu$-twisted potential, we will consider three cases
\begin{enumerate}
\item If $\lambda^2+1\not=0$, then
\begin{multline*}
w=(1+\lambda^2)\sum\limits_{i=0}^{s-1}\lambda^iy_1^iy_2^2y_1^{s-1-i}+(\lambda^s+\lambda)y_2y_1^{s-1}y_2-(1+\lambda)\sum\limits_{i=0}^{s-2}\lambda^{i+1}y_1^iy_2y_1y_2y_1^{s-2-i}\\
-(\lambda^s+1)y_2y_1^{s-2}y_2y_1-(1+\lambda)y_1y_2y_1^{s-2}y_2-(\lambda^s+1)y_1^{s-1}y_2y_1\\
+(\lambda^s-1)y_1^{s}y_2+(1-\lambda)y_2y_1^{s}+(1+\lambda)y_1y_2y_1^{s-1}+y_1^{s+1}.
\end{multline*}
\item If $\charr\kk=2$ and $\lambda=1$, then
$$
w=\sum\limits_{i=0}^{s-1}y_1^iy_2^2y_1^{s-1-i}+y_2y_1^{s-1}y_2+y_1^{s}y_2+y_2y_1^{s}.
$$
\item If $\lambda^2+1=0$ and $\charr\kk\not=2$, then we have $\lambda^3+1\not=0$. Note also that in this case $4\mid s+1$, i.e. $s\ge 7$. In this case we define
\begin{multline*}
w=(1+\lambda^3)\sum\limits_{i=0}^{s-1}\lambda^iy_1^iy_2^2y_1^{s-1-i}+(\lambda^s+\lambda^2)y_2y_1^{s-1}y_2-(1+\lambda)\sum\limits_{i=0}^{s-3}\lambda^{i+2}y_1^iy_2y_1^2y_2y_1^{s-3-i}\\
-(\lambda^s+1)y_2y_1^{s-3}y_2y_1^2-(1+\lambda)y_1y_2y_1^{s-3}y_2y_1-(\lambda+\lambda^2)y_1^2y_2y_1^{s-3}y_2\\
-(\lambda^s+\lambda^2)y_1^{s-2}y_2y_1^2-(1+\lambda^3)y_1^{s-1}y_2y_1+(\lambda^s-\lambda)y_1^{s}y_2+(1-\lambda^2)y_2y_1^{s}\\
+(1+\lambda)y_1y_2y_1^{s-1}+(\lambda+\lambda^2)y_1y_2y_1^{s-1}+(1+\lambda)y_1^{s+1}.
\end{multline*}
\end{enumerate}
\end{proof}

\subsection{$s$-homogeneous triples with $A=A^0$ and $B_1(\alpha,\beta)\ds_AM$}\label{Bsec}

Let us now consider algebras $\Lambda$ with $\FF(\Lambda^!)$ of the form $(A^0,M,\vp)$ with $M_A\cong B_1(\alpha,\beta)\oplus Z_1^m$ ($(\alpha,\beta)\in\mathbb{P}_1$).
As it was mentioned above, we may assume in this case that $(\alpha,\beta)=(1,0)$ and ${}_AM\cong B_1'(0,1)\oplus (Z_1')^m$. Note that $m>0$ follows from the definition of an $s$-homogeneous triple.
Let us introduce the $A$-bimodules $B_1=(A^{\rm op}\ot A)/\langle f_1x,yf_1,f_1y-xf_1\rangle_{A^{\rm op}\ot A}$ and $B_2=(A^{\rm op}\ot A)^2/\langle f_1x,xf_1,yf_1,yf_2,f_2x,f_2y,f_1y-xf_2\rangle_{A^{\rm op}\ot A}$, where $f_1,f_2$ as usually denote the standard generators. It is not difficult to see that there are only two possible $A$-bimodule structures on $M$; namely, we may have $M= B_1\oplus\kk^m$ or $M= B_2\oplus\kk^{m-1}$. This reduces the first step that we usually fulfill in our proofs, and thus we can go directly to the description of algebras $\Lambda$ corresponding to the $s$-homogeneous triple $(A^0,M,\vp)$.

\begin{theorem}\label{A0B} Suppose that $\Lambda=T_\kk V/(f_1, f_2)$, where $f_1$ and $f_2$ are two linearly independent elements of $V^{\ot s}$.
\begin{enumerate}
    \item If $(\Lambda^!)^{(s)}\cong\kk\langle x,y\rangle/(x,y)^2=A^0$ and $(\Lambda^!)^{(s,1)}\cong B_1\oplus\kk^m$ as an $A^0$-bimodule, then $\Lambda\cong \kk\langle x_1,\dots,x_m,y_1\rangle/(y_1f,fy_1)$ for some $f\in \kk\langle x_1,\dots,x_m,y_1\rangle_{s-1}$. Conversely, if $\Lambda=\kk\langle x_1,\dots,x_m,y_1\rangle/(y_1f,fy_1)$, then $(\Lambda^!)^{(s)}\cong A^0$ and $(\Lambda^!)^{(s,1)}\cong B_1\oplus\kk^m$ as an $A^0$-bimodule except the case where $s=2t$ and $f=(gy_1)^{t-1}g$ for some $g\in \kk\langle x_1,\dots,x_m,y_1\rangle_1$.

\item If $(\Lambda^!)^{(s)}\cong\kk\langle x,y\rangle/(x,y)^2=A^0$ and $(\Lambda^!)^{(s,1)}\cong B_2\oplus\kk^m$ as an $A^0$-bimodule, then $\Lambda\cong \kk\langle x_1,\dots,x_m,y_1,y_2\rangle/(y_1f,fy_2)$ for some $f\in \kk\langle x_1,\dots,x_m,y_1,y_2\rangle_{s-1}$. Conversely, if $\Lambda=\kk\langle x_1,\dots,x_m,y_1,y_2\rangle/(y_1f,fy_2)$, then $(\Lambda^!)^{(s)}\cong A^0$ and $(\Lambda^!)^{(s,1)}\cong B_2\oplus\kk^m$ as an $A^0$-bimodule except the case where $s=2t+1$ and $f=\alpha (y_2y_1)^t$ for some $\alpha\in\kk^*$ and the case where $f=\alpha y_1^{s-1}$ or $f=\alpha y_2^{s-1}$ for some $\alpha\in\kk^*$.
\end{enumerate}
\end{theorem}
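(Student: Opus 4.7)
My plan is to use the bimodule associativity condition $1_M\ot_A\vp = \vp\ot_A 1_M$ from Definition \ref{str}(2) to pin down the form of $\vp^*A_1^*$, and then to invoke the prior classification theorems to handle the converse direction and identify all exceptional cases.

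For the forward direction of Part 1, I would fix a basis $f_1,e_1,\ldots,e_m$ of $M_0$ adapted to the decomposition $M=B_1\oplus\kk^m$, with dual basis $y_1,x_1,\ldots,x_m$ of $M_0^*$. Since $M_1=\kk f_1y=\kk xf_1$ is one-dimensional while $e_j\cdot A_1=A_1\cdot e_j=0$, writing out the equality
$$v_0\cdot\vp(v_1\ot\cdots\ot v_s)=\vp(v_0\ot\cdots\ot v_{s-1})\cdot v_s$$
in $M_1$ for $v_0,\ldots,v_s\in M_0$ and running through the four cases $(v_0,v_s)\in\{f_1,e_j\}\times\{f_1,e_k\}$ yields: (a) $\vp(Y\ot e_j)$ has no $y$-component; (b) $\vp(e_j\ot Y)$ has no $x$-component; (c) for $Y\in M_0^{\ot(s-1)}$, the coefficient of $y$ in $\vp(Y\ot f_1)$ equals the coefficient of $x$ in $\vp(f_1\ot Y)$. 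Dualising, (a) and (b) give $\vp^*f_x=y_1g_1$ and $\vp^*f_y=g_2y_1$ for some $g_1,g_2\in\kk\langle x_1,\ldots,x_m,y_1\rangle_{s-1}$, and (c) forces $g_1=g_2=:f$. Since $\Lambda=T_\kk M_0^*/(\vp^*A_1^*)$, we conclude $\Lambda\cong\kk\langle x_1,\ldots,x_m,y_1\rangle/(y_1f,fy_1)$. Part 2 proceeds identically, with the basis $f_1,f_2,e_1,\ldots,e_m$ adapted to $M=B_2\oplus\kk^m$ and the identification $f_1y=xf_2$ in $B_2$ playing the role of $f_1y=xf_1$; the analogous analysis yields $\vp^*f_x=y_1f$ and $\vp^*f_y=fy_2$.

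For the converse in both parts, I start from $\Lambda$ of the prescribed form. Since $\Lambda$ has two relations, $(\Lambda^!)^{(s)}$ is a quadratic algebra with two generators and appears in Lemma \ref{class}. Theorem \ref{impos_struc} rules out $A^2(q),A^3,A^8,A^9(q)$, so $(\Lambda^!)^{(s)}$ is one of $A^0,A^1,A^4,A^5,A^6,A^7(q)$. I would then compare the specific shape $(y_1f,fy_1)$ (resp.\ $(y_1f,fy_2)$), possibly after a linear change of generators of $\Lambda$, with the explicit presentations produced by Theorems \ref{A4A5}, \ref{A6A70}, \ref{A7q}, \ref{A1}, and verify that the match occurs precisely in the excepted situations: in Part 1, only $s=2t$ with $f=(gy_1)^{t-1}g$ works, giving $(y_1g)^t,(gy_1)^t$ (the second $A^4$-case of Theorem \ref{A4A5}); in Part 2, $s=2t+1$ with $f=\alpha(y_2y_1)^t$ gives the $A^5$-case of Theorem \ref{A4A5}, whereas $f=\alpha y_1^{s-1}$ and $f=\alpha y_2^{s-1}$ give the $A^7(0)$- and $A^6$-cases of Theorem \ref{A6A70} respectively. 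In every other case $(\Lambda^!)^{(s)}\cong A^0$, and the forward direction combined with the asymmetry of the relations then pins down whether $(\Lambda^!)^{(s,1)}\cong B_1\oplus\kk^m$ (Part 1) or $\cong B_2\oplus\kk^m$ (Part 2).

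The main obstacle is making the comparison in the converse direction exhaustive: one must check systematically that no linear change of generators can turn $(y_1f,fy_1)$ or $(y_1f,fy_2)$ into any of the standard presentations appearing in Theorems \ref{A4A5}--\ref{A1} other than the exceptions enumerated in the statement. A related subtlety is that both bimodule structures $B_1\oplus\kk^m$ and $B_2\oplus\kk^m$ induce identical right $A^0$-module decompositions of $(\Lambda^!)^{(s,1)}$, so they can only be distinguished a posteriori from the shape of the relations themselves, which is what forces Part 1 and Part 2 to be stated separately.
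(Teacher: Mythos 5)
Your forward direction is essentially the paper's argument: fix a basis of $M_0$ adapted to $M=B_k\oplus\kk^m$, use the relation $1_M\ot_A\vp=\vp\ot_A 1_M$ to force $\vp^*f_x=y_1g_1$ and $\vp^*f_y=g_2y_k$, and then the identity $f_1\vp(X\ot f_k)=\vp(f_1\ot X)f_k$ to conclude $g_1=g_2$. The converse, however, contains a genuine gap in two places. First, the ``systematic check'' that no change of generators turns $\langle y_1f,fy_k\rangle$ into one of the presentations of Theorems \ref{A4A5}, \ref{A6A70}, \ref{A7q}, \ref{A1} is exactly the hard part, and you explicitly leave it unresolved. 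The paper's key device is that every non-$A^0$ presentation forces a \emph{decomposable} tensor of the form $(g_1g_2)^t$ or $(g_1g_2)^tg_1$ to lie in the span $\langle y_1f,fy_k\rangle$; one then shows that $\beta y_1f+\gamma fy_k$ with $\beta,\gamma\in\kk^*$ can be decomposable only when $f=\alpha y_1^py_k^{s-1-p}$, and reads off the exceptional cases from there. Without an argument of this kind, the claim that $(\Lambda^!)^{(s)}\cong A^0$ outside the exceptions is unsupported.

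Second, and more seriously, knowing $(\Lambda^!)^{(s)}\cong A^0$ does not determine $(\Lambda^!)^{(s,1)}$: a priori it could be $\kk^{m'}$, or ${}_{\nu^{-1}}D\oplus\kk^{m'}$ (the twisted-potential case of Theorem \ref{A0Z}), or $B_1\oplus\kk^{m'}$, or $B_2\oplus\kk^{m'}$. Your appeal to ``the forward direction combined with the asymmetry of the relations'' is circular: the forward direction only says that a given bimodule structure forces a certain shape of relations, not that a given shape of relations forces the bimodule structure, and the same relation space could in principle arise from several bimodule types. The paper rules the alternatives out one by one: $M\not\cong\kk^{m+k}$ because $y_1(fy_k)=(y_1f)y_k$ lies in $RC_1\cap C_1R$ where $R=\langle y_1f,fy_k\rangle$, so $\Lambda^!_{s+1}\not=0$; a $Z_2$-type summand is excluded by a detailed computation showing that $\langle y_1f,fy_k\rangle$ can come from a twisted potential on a two-dimensional space only when $s=2t$ and $f=(zy_1)^{t-1}z$ (for $k=1$) or $s=2t+1$ and $f=(y_2y_1)^t$ (for $k=2$), i.e.\ precisely in the excluded cases; and $B_1$ is distinguished from $B_2$ by showing there are no nonzero $f,f'$ and linear $g$ with $\langle gf,fg\rangle=\langle y_1f',f'y_2\rangle$. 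These steps constitute the bulk of the paper's proof and are absent from your proposal.
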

\begin{proof} Suppose first that $F(\Lambda^!)=(A^0,B_k\oplus\kk^m,\vp)$ for some homomorphism $\vp$ and some $k\in\{1,2\}$. Let now $x_1,\dots,x_m,y_1,\dots,y_k\in M_0^*$ be the basis dual to a basis $g_1,\dots,g_m,f_1,\dots,f_k$ of $M_0$ whose first $m$ elements constitute a basis of the summand $\kk^{m}$ and the last $k$ are standard generators of the summand $B_k$. We will denote $\kk\langle x_1,\dots,x_m,y_1,\dots,y_k\rangle$ by $C$ for short during this proof.
As usually, we have $\Lambda\cong C/(\vp^*A_1^*)$.
Let now $f_x,f_y$ be the basis of $A_1^*$ dual to the basis $x,y$ of $A_1$. Note that if $f_x\vp(r\ot X)\not=0$ for some $r\in M_0$ such that $rx=ry=0$ and some $X\in M_0^{\ot (s-1)}$, then $0\not=\vp(r\ot X)f_k=r\vp(X\ot f_k)=0$. Hence, $\vp^*f_x=y_1f$ for some $f\in C_{s-1}$. Analogously, $\vp^*f_y=f'y_k$ for some $f'\in C_{s-1}$. For any $X\in M_0^{\ot (s-1)}$, we have
$f(X)xf_k=\vp(f_1\ot X)f_k=f_1\vp(X\ot f_k)=f'(X)f_1y$, and hence $f'=f$.

Suppose now that $\Lambda=C/(y_1f,fy_k)$, where $C$ is as above and $y_1f$ and $fy_k$ are linearly independent $s$-homogeneous polynomials.
Let us prove first that $(\Lambda^!)^{(s)}\cong A^0$ except the cases listed in the theorem. Suppose that it is not true. Then due to Lemma \ref{class} and Theorems \ref{A4A5}, \ref{A6A70}, \ref{impos_struc}, \ref{A7q}, and \ref{A1} there are some nonzero (but maybe linearly dependent) $g_1,g_2\in C_1$ such that $(g_1g_2)^t,(g_2g_1)^t\in (y_1f,fy_k)$ in the case $s=2t$ and $(g_1g_2)^tg_1,(g_2g_1)^tg_2\in (y_1f,fy_k)$ in the case $s=2t+1$. Let us recall that the tensor $w\in V^{\ot s}$ is called {\it decomposable} if $w=w_1\ot\dots\ot w_s$ for some $w_i\in V$ ($1\le i\le s$).
Suppose that the tensor $\beta y_1f+\gamma fy_k$ is decomposable for some $\beta,\gamma\in\kk^*$. Let us present $f$ in the form $f=y_1^pf'y_k^q$, where $f'\in C_{s-1-p-q}$ can be presented neither in the form $f'=y_1f''$ nor in the form $f'=f''y_k$ with $f''\in C$. The tensor $\beta y_1f'+\gamma f'y_k$ has to be decomposable.
If $p+q\not=s-1$, then it is easy to see that there is some $h_1\in C_{s-1-p-q}^*$ such that $h_1(f')=1$ and $h_1(f''y_k)=0$ for any $f''\in C_{s-2-p-q}$. Then we have $(1_{C_1}\ot h_1)(\beta y_1f'+\gamma f'y_k)=\beta y_1$. On the other hand, there is some $h_2\in C_{s-1-p-q}^*$ such that $(1_{C_1}\ot h_2)(\beta y_1f'+\gamma f'y_k)$ is linearly independent with $y_1$, and hence $\beta y_1f'+\gamma f'y_k$ cannot be decomposable. Thus, $\beta y_1f+\gamma fy_k$ can be decomposable only for $f$ of the form $f=\alpha y_1^py_k^{s-1-p}$ with $\alpha\in\kk^*$. It is easy to see that if $f$ has such a form and is not equal to $\alpha y_1^{s-1}$ or $\alpha y_k^{s-1}$, then there are no nonzero linear $g_1,g_2$ that satisfy the condition mentioned above. On the other hand, the cases $f=\alpha y_1^{s-1}$ and $f=\alpha y_k^{s-1}$ are excluded by our assumptions. Our argument shows that if $(\Lambda^!)^{(s)}\not\cong A^0$, then $f$ is decomposable. Since $f$ is not linearly dependent with a power of $y_1$ or $y_k$, one of the elements $g_1,g_2$ mentioned above is linearly dependent with $y_1$ and, in the case $k=2$, the second one is linearly dependent with $y_2$. Then it is easy to see that one of the conditions prohibited by the theorem holds. Thus, we have proved that $(\Lambda^!)^{(s)}\cong A^0$.

Let us denote by $M$ the $A$-bimodule $(\Lambda^!)^{(s,1)}$.
Let us denote by $R\subset C_s$ the subspace generated by $y_1f$ and $fy_k$. Note that $y_1(fy_k)=(y_1f)y_k\in RC_1\cap C_1R$, and hence $M\not\cong \kk^{m+k}$ (see the proof of Theorem \ref{A0Z}). Let us now prove that $M$ cannot contain right $A$-module summand isomorphic to $Z_2$. Due to Theorem \ref{A0Z} it is enough to show that there is no twisted potential in two variables $w\in C_{s+1}$ such that $\{(h\ot 1_{C_s})w\mid h\in C_1^*\}=\langle y_1f,fy_k\rangle$.
Here we consider the cases $k=1$ and $k=2$ separately. If $k=2$, then $w,f\in \kk\langle y_1,y_2\rangle$. Using the conditions above, we get $w=y_1(\alpha_1y_1f+\beta_1fy_2)+y_2(\alpha_2y_1f+\beta_2fy_2)$ for some $\alpha_1,\alpha_2,\beta_1,\beta_2\in\kk$. Since $w$ is a $\nu$-twisted potential for some automorphism $\nu$ of $\kk\langle y_1,y_2\rangle_1$, we have 
\begin{multline*}
    y_1(\alpha_1y_1f+\beta_1fy_2)+y_2(\alpha_2y_1f+\beta_2fy_2)=(\alpha_1y_1f+\beta_1fy_2)\nu^{-1}(y_1)+(\alpha_2y_1f+\beta_2fy_2)\nu^{-1}(y_2)\\
    =(\alpha_1'y_1f+\beta_1'fy_2)y_1+(\alpha_2'y_1f+\beta_2'fy_2)y_2
\end{multline*}
for some $\alpha_1',\alpha_2',\beta_1',\beta_2'\in\kk$. Let us represent $f$ in the form 
$$f=\sum\limits_{\scriptsize\begin{array}{c}
     i_1+\dots+i_t+j_1+\dots+j_t=s-1\\
     i_2,\dots,i_t,j_1,\dots,j_{t-1}>0
\end{array}}c_{i_1,\dots,i_t,j_1,\dots,j_t}y_1^{i_1}y_2^{j_1}\dots y_1^{i_t}y_2^{j_t}.$$
Choosing maximal $j$ such that there is $c_{i_1,\dots,i_t,j_1,\dots,j_t}\not=0$ with $j_t=j$ and looking at the equality above, we get $\beta_2'=0$. In a similar way, one can show that $\alpha_1=\beta_2=\alpha_1'=0$, and hence
$\alpha_2y_2y_1f=\beta_1'fy_2y_1+(\alpha_2'-\beta_1)y_1fy_2$. Since all monomials in the decomposition of $y_1fy_2$ start with $y_1$ and end with $y_2$, it follows that $\alpha_2'=\beta_1$. If $\alpha_2=0$, then $w=\beta_1y_1fy_2$, and hence $\{(h\ot 1_{C_s})w\mid h\in C_1^*\}$ is one dimensional. Thus, we have $y_2y_1f=\gamma fy_2y_1$ for some $\gamma\in\kk$. Hence, $s=2t+1$ and $f$ equals $(y_2y_1)^t$ in this case.

Let us now consider the case $k=1$. In this case $w,f\in \kk\langle y_1,z\rangle$ for some $z\in C_1$ and, analogously to the case $k=2$, we get
$$
    w=y_1(\alpha_1y_1f+\beta_1fy_1)+z(\alpha_2y_1f+\beta_2fy_1)=(\alpha_1'y_1f+\beta_1'fy_1)y_1+(\alpha_2'y_1f+\beta_2'fy_1)z.
$$
for some $\alpha_1,\alpha_2,\beta_1,\beta_2m\alpha_1',\alpha_2',\beta_1',\beta_2'\in\kk$. Analogously to the case $k=2$, one can show that $\alpha_1=\beta_2=\beta_1'=\alpha_2'=0$, and hence
$\alpha_2zy_1f=\beta_2'fy_1z+(\alpha_1'-\beta_1)y_1fy_1$. Now, as before, we get $\alpha_1'=\beta_1$ and $zy_1f=\gamma fy_1z$ for some $\gamma\in\kk$. Hence, $s=2t$ and $f$ equals $(zy_1)^{t-1}z$  in this case. Thus, we have proved that $\Lambda$ is not an algebra defined by a twisted potential on a two dimensional space.

It remains to prove that $M$ cannot be isomorphic to $B_{3-k}\oplus \kk^{m+2k-3}$. Due to the facts that we have already proved, it suffices to show that there are no nonzero
$f,f'\in \kk\langle x_1,\dots,x_m,y_1,y_2\rangle_{s-1}$ and linear polynomial $g$ such that $\langle gf,fg\rangle=\langle y_1f',f'y_2\rangle$. Suppose that we have found $f$, $f'$ and $g$ that satisfy the just mentioned conditions.
We may assume without loss of generality that $g$ and $y_1$ are linearly independent. Indeed, in the opposite case $g$ is linearly independent with $y_2$ and one can simply apply the functor ${}^{\rm op}$ and interchange $y_1$ and $y_2$.
Then obviously $f'\not=y_1f''$ for any $f''\in \kk\langle x_1,\dots,x_m,y_1,y_2\rangle_{s-2}$. It is easy to check that $f'\not=\alpha y_2^{s-1}$ for any $\alpha\in\kk$, and hence $\beta y_1f'+\gamma f'y_2$ cannot be presented in the form $g'f''$ for some linear $g'$ if $\beta,\gamma\in\kk^*$ (see the proof that this tensor cannot be decomposable above).
Thus, we may assume that $f'=gf''$ and $f=f''y_2$ for some $f''\in \kk\langle x_1,\dots,x_m,y_1,y_2\rangle_{s-2}$. Then we have $\langle gf''y_2,f''y_2g\rangle=\langle y_1gf'',gf''y_2\rangle$. It is clear that $f''\not=gf'''$  for any $f'''\in \kk\langle x_1,\dots,x_m,y_1,y_2\rangle_{s-3}$. Then $\beta gf''y_2+\gamma f''y_2g$ cannot be presented in the form $g'f'''$ for some linear $g'$ if $\beta,\gamma\in\kk^*$. Thus, we have $f''y_2g=\alpha y_1gf''$ for some $\alpha\in\kk^*$. The obtained equality is not valid for any $f''$, and hence we get a contradiction that finishes the proof.
\end{proof}

\subsection{$s$-homogeneous triples with $A=A^0$ and $M=\kk^m$}

Now we are ready to give a criterion for $s$-homogeneous algebra $\Lambda$ having the $s$-homogeneous triple $\FF(\Lambda^!)$ of the form $(A^0,\kk^m,\vp)$. Note that if $\Lambda$ is an $s$-homogeneous algebra with two relations, then the last condition is equivalent to the equality $\Lambda^!_{s+1}=0$. To achieve our purpose we simply exclude all other situations. Thus, we get the following theorem.

\begin{theorem}\label{A0s}
Suppose that $\Lambda=T_\kk V/(f_1, f_2)$, where $f_1$ and $f_2$ are two linearly independent elements of $V^{\ot s}$. Then $(\Lambda^!)^{(s)}\cong\kk\langle x,y\rangle/(x,y)^2=A^0$ and $(\Lambda^!)^{(s,1)}\cong \kk^m$ as an $A^0$-bimodule if and only if non of the following conditions holds:
\begin{itemize}
    \item There are some nonzero $u,v\in V$ such that either $s=2t$ and $(u\ot v)^{\ot t},(v\ot u)^{\ot t}\in\langle f_1,f_2\rangle$ or $s=2t+1$ and $(u\ot v)^{\ot t}\ot u,(v\ot u)^{\ot t}\ot v\in \langle f_1,f_2\rangle$.
    \item There is a two dimensional subspace $U$ of $V$ and some twisted potential $w\in U^{\ot (s+1)}$ such that $\langle f_1,f_2\rangle=\{(f\ot 1_{U^{\ot s}})w\mid f\in U^*\}$.
    \item There are some $f\in V^{\ot (s-1)}$ and $u,v\in V$ such that $\langle f_1,f_2\rangle=\langle u\ot f,f\ot v\rangle$.
\end{itemize}
\end{theorem}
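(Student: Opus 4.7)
The plan is to match each of the three conditions against one of the previous classification results and then carry out an exhaustive case analysis on the possible isomorphism types of $(\Lambda^!)^{(s)}$ supplied by Lemma~\ref{class} and Theorem~\ref{impos_struc}.

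For the necessity direction, I will assume $(\Lambda^!)^{(s)}\cong A^0$ and $(\Lambda^!)^{(s,1)}\cong\kk^m$ and derive a contradiction from each of the three conditions. If condition~1 holds with linearly independent $u,v$, then the two specified tensors are themselves linearly independent and must span $\langle f_1,f_2\rangle$, so Theorem~\ref{A4A5} forces $(\Lambda^!)^{(s)}\in\{A^4,A^5\}$; if instead $u,v$ are linearly dependent, the condition collapses in both parities of $s$ to the requirement $z^s\in\langle f_1,f_2\rangle$ for some nonzero $z\in V$, and completing $z$ to a basis writes $\Lambda\cong\kk\langle x_1,\dots,x_m,z\rangle/(f,z^s)$, whence Theorem~\ref{A1} (together with its excluded-form companions Theorems~\ref{A4A5}, \ref{A6A70}, and~\ref{A7q}) forces $(\Lambda^!)^{(s)}\in\{A^1,A^4,A^6,A^7(q)\}$. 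If condition~2 holds then $\Lambda\cong\kk\langle x_1,\dots,x_m\rangle*\bD(w)$, so Theorem~\ref{A0Z} yields either $(\Lambda^!)^{(s)}\not\cong A^0$ or a $Z_2$-summand in $(\Lambda^!)^{(s,1)}$. If condition~3 holds, then after a change of basis of $V$ the algebra takes the form $\kk\langle\dots\rangle/(y_1f,fy_1)$ or $\kk\langle\dots\rangle/(y_1f,fy_2)$, and Theorem~\ref{A0B} similarly gives either $(\Lambda^!)^{(s)}\not\cong A^0$ or a $B_1$- or $B_2$-summand. Each branch contradicts the hypothesis.

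For sufficiency, I will assume none of conditions~1--3 holds and show $(\Lambda^!)^{(s)}\cong A^0$ and $(\Lambda^!)^{(s,1)}\cong\kk^m$. Theorem~\ref{impos_struc} eliminates the types $A^2(q),A^3,A^8,A^9(q)$. For each remaining non-$A^0$ type the earlier theorems supply a presentation of $\Lambda$ that matches a forbidden condition: Theorem~\ref{A4A5} places $A^4$ and $A^5$ under condition~1 (with $u=y_1,v=y_2$), while Theorems~\ref{A6A70}, \ref{A7q}, and~\ref{A1} each put $y_1^s$ among the defining relations, so condition~1 is triggered by $u=v=y_1$. Hence $(\Lambda^!)^{(s)}\cong A^0$. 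By Corollary~\ref{second_coro} together with Lemmas~\ref{B2_ts}, \ref{B11_ts}, \ref{Z2_ts}, and~\ref{B1_ts}, the only non-trivial bimodule summand that $(\Lambda^!)^{(s,1)}$ could possess is a $Z_2$, $B_1$, or $B_2$; Theorems~\ref{A0Z} and~\ref{A0B} identify these with presentations matching conditions~2 and~3 respectively, both of which are excluded. Therefore $(\Lambda^!)^{(s,1)}\cong\kk^m$.

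The main subtlety I expect is the linearly-dependent subcase of condition~1: one must recognise that both parities of $s$ collapse to the single statement ``$z^s\in\langle f_1,f_2\rangle$ for some nonzero $z$'', so that Theorem~\ref{A1} can be invoked after the change of basis. A secondary subtlety is that the exceptional subcases of Theorems~\ref{A0Z}(2) and~\ref{A0B} produce $(\Lambda^!)^{(s)}\not\cong A^0$ rather than a non-trivial bimodule summand, but either outcome contradicts the hypothesis, so the case analysis still goes through cleanly.
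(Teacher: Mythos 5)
Your proposal is correct and is essentially the paper's own argument: the paper gives no separate proof of Theorem \ref{A0s} beyond the remark that ``we simply exclude all other situations,'' and your case analysis is exactly that exclusion, matching condition 1 to the $A^1,A^4,A^5,A^6,A^7(q)$ cases (Theorems \ref{A4A5}, \ref{A6A70}, \ref{A7q}, \ref{A1}), condition 2 to the $Z_2$-summand case (Theorem \ref{A0Z}), and condition 3 to the $B_1/B_2$ cases (Theorem \ref{A0B}), with Theorem \ref{impos_struc} and the structural lemmas closing off everything else. The two subtleties you flag (the collapse of the linearly dependent subcase of condition 1 to $z^s\in\langle f_1,f_2\rangle$, and the fact that the exceptional subcases of Theorems \ref{A0Z} and \ref{A0B} still contradict the hypothesis) are handled correctly.
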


\subsection{About the $s$-Koszulity}

In this subsection we discuss the $s$-Koszulity of algebras considered in this paper, i.e. connected $s$-homogeneous algebras with two relations. It follows from what is already proved in this paper that $\Lambda$ can be not $s$-Koszul only in the cases where $A=A^1$ or $A=A^0$.
We will show that the algebras $\Lambda$ with $A=A^0$ and $M=B_1\oplus\kk^m$ are not $s$-Koszul and moreover do not satisfy the extra condition.
For all the remaining algebras we will prove criteria that simplify the verification of $s$-Koszulity.
Namely, we will show that the $s$-Koszulity follows from the equality \eqref{HSm} for the Hilbert series and is equivalent to the exactness of the generalized Koszul complex in the second term. Note that it was shown in \cite{Pos,Roo} that the equality for Hilbert series does not imply the Koszul property even for quadratic algebras. Moreover, it was shown in \cite{DimP} that there exist two algebras one of which is Koszul and another is not Koszul that have the same Hilbert series.

Let us start with a not difficult consideration that the algebras $\Lambda$ with $(\Lambda^!)^{(s)}\cong\kk\langle x,y\rangle/(x,y)^2=A^0$ and $(\Lambda^!)^{(s,1)}\cong B_1\oplus\kk^m$ are not $s$-Koszul.

\begin{prop}\label{A0B1_K}
Suppose that $\Lambda\cong \kk\langle x_1,\dots,x_m,y_1\rangle/(y_1f,fy_1)$ for some $f\in \kk\langle x_1,\dots,x_m,y_1\rangle_{s-1}$ such that $y_1f$ and $fy_1$ are linearly independent.
Then $\Lambda$ does not satisfy the extra condition and, in particular, is not $s$-Koszul except the case where $s=2t$ and $f=(gy_1)^{t-1}g$ for some $g\in \kk\langle x_1,\dots,x_m,y_1\rangle_1$.
\end{prop}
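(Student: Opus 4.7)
The plan is to exhibit an explicit element in the left hand side of the extra condition \eqref{ec} that does not lie in the right hand side, outside of the exceptional case. First, noting $\vp^* A_1^* = W := \langle y_1 f, fy_1\rangle \subset V^{\ot s}$ with $V = \langle x_1,\dots,x_m,y_1\rangle$, the extra condition \eqref{ec} reads
\[
W \ot V^{\ot(s-1)} \cap \sum_{j=1}^{s-1} V^{\ot j} \ot W \ot V^{\ot(s-1-j)} \stackrel{?}{=} (W \ot V \cap V \ot W) \ot V^{\ot(s-2)}.
\]

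To compute the right hand side, I would use the standard duality $(V\ot W)^\perp = V^* \ot W^\perp$ and $(W \ot V)^\perp = W^\perp \ot V^*$ inside $(V^*)^{\ot(s+1)}$, which yields $\dim \Lambda^!_{s+1} = \dim(W \ot V \cap V \ot W)$. Theorem \ref{A0B} gives $(\Lambda^!)^{(s,1)} \cong B_1 \oplus \kk^m$ outside the exceptional case, hence $\dim \Lambda^!_{s+1} = \dim(B_1)_1 = 1$. Therefore $W \ot V \cap V \ot W = \langle y_1 \ot f \ot y_1 \rangle$, and the right hand side of \eqref{ec} equals $\langle y_1 \ot f \ot y_1 \rangle \ot V^{\ot(s-2)}$.

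Next I would introduce the element $E := f \ot y_1 \ot f = (fy_1) \cdot f = f \cdot (y_1 f) \in V^{\ot(2s-1)}$. Because $fy_1 \in W$, $E$ lies in $W \ot V^{\ot(s-1)}$; because $y_1 f \in W$, $E$ also lies in $V^{\ot(s-1)} \ot W$, which is the $j = s-1$ summand in the inner sum. Hence $E$ belongs to the left hand side. To conclude, I would show $E$ does not lie in the right hand side. Assume for contradiction that $E = y_1 \ot f \ot y_1 \ot w$ for some $w \in V^{\ot(s-2)}$. Projecting onto the first tensor slot via the dual basis element $x_i^* \in V^*$, the right hand side contributes $0$, whereas the left hand side contributes $f^{(x_i)} \ot y_1 \ot f$, where $f^{(x_i)} \in V^{\ot(s-2)}$ collects the monomials of $f$ starting with $x_i$ with their leading letter stripped. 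Since $f \neq 0$, this forces $f^{(x_i)} = 0$ for every $i$, i.e.\ $f = y_1 \ot f'$ for some $f' \in V^{\ot(s-2)}$. Substituting and repeating the projection at slots $2, 3, \dots$ inductively, one concludes $f = y_1^{\ot(s-1)}$; but then $y_1 f = y_1^{\ot s} = fy_1$, contradicting the linear independence of $y_1 f$ and $fy_1$.

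The main obstacle is carrying through the iterated slot-matching argument in the last step. In the exceptional case $s = 2t$, $f = (gy_1)^{t-1} g$, the intersection $W \ot V \cap V \ot W$ enlarges to dimension $2$ (the additional basis element being $(fy_1) \cdot g = g \cdot (y_1 f) = (gy_1)^t g$), and one then verifies $E = \bigl((gy_1)^t g\bigr) \ot (y_1 g)^{t-1}$ lies in the enlarged right hand side, consistently with those algebras being $s$-Koszul by Theorem \ref{A4A5}.
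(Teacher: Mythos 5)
Your proposal is correct and follows essentially the same route as the paper's proof: both identify $(\vp^*A_1^*)\ot M_0^*\cap M_0^*\ot(\vp^*A_1^*)$ as the line spanned by $y_1\ot f\ot y_1$ (via Theorem \ref{A0B}) and then exhibit $f\ot y_1\ot f$ as an element of the left-hand side of \eqref{ec} not lying in $\langle y_1\ot f\ot y_1\rangle\ot V^{\ot(s-2)}$. You merely spell out the final slot-matching step (which the paper dismisses as clear from the linear independence of $f$ and $y_1^{\ot(s-1)}$) and add an unneeded but correct consistency check in the exceptional case.
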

\begin{proof} We have $M_0^*=\bigoplus\limits_{i=1}^m\kk x_i\oplus \kk y_1$ and $\vp^*A_1^*=\kk fy_1\oplus \kk y_1f$. Our assumptions and Theorem \ref{A0B} guarantee also that $(\vp^*A_1^*)\ot M_0^*\cap M_0^*\otimes (\vp^*A_1^*)=\kk y_1fy_1$. Note that $fy_1f\in (\vp^*A_1^*)\ot (M_0^*)^{\ot(s-1)}\cap (M_0^*)^{\ot(s-1)}\ot (\vp^*A_1^*)$, and hence it is enough to show that $fy_1f\not\in y_1fy_1\kk\langle x_1,\dots,x_m,y_1\rangle_{s-2}$. But the last assertion is clear because $f$ and $y_1^{s-1}$ are linearly independent.
\end{proof}

Let us now prove the main result of this subsection.

\begin{theorem}\label{sKosz}
Suppose that $\Lambda=T_\kk V/(f_1, f_2)$, where $f_1$ and $f_2$ are two linearly independent elements of $V^{\ot s}$. Then $\Lambda$ is $s$-Koszul if and only if its generalized Koszul complex is exact in the second term and if and only if $\Big(\HH_A(t^s)-t\HH_M(t^s)\Big)\HH_{\Lambda}(t)=1$, where $A=(\Lambda^!)^{(s)}$ and $M=(\Lambda^!)^{(s,1)}$.
\end{theorem}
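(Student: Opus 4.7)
The plan is exhaustive case analysis on $A = (\Lambda^!)^{(s)}$, drawing on the classification of $(A, M)$ pairs from the preceding sections. The implications $(1) \Rightarrow (2)$ and $(1) \Rightarrow (3)$ are immediate: $s$-Koszulity gives full exactness of the generalized Koszul complex $K$ in positive degrees (hence in particular at $K_2$), and the Euler characteristic of $K$ yields identity \eqref{HSm}. The substance of the theorem lies in the converses $(2) \Rightarrow (1)$ and $(3) \Rightarrow (1)$, neither of which holds for arbitrary $s$-homogeneous algebras but which are forced here by the restricted structure of the two-relation case.

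For the converse direction, Theorem \ref{impos_struc} eliminates $A \in \{A^2(q), A^3, A^8, A^9(q)\}$, while for $A \in \{A^4, A^5, A^6, A^7(q)\}$ Theorems \ref{A4A5}, \ref{A6A70}, \ref{A7q} show $\Lambda$ is always $s$-Koszul and has an explicit form, so all three conditions hold simultaneously. This reduces the problem to $A \in \{A^0, A^1\}$.

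In the subcase $A = A^0$ with $M \cong B_1 \oplus \kk^m$, Proposition \ref{A0B1_K} shows the extra condition \eqref{ec} fails. Since \eqref{ec} is equivalent both to exactness of $K$ at position $2$ in degrees below $2s$, and to identity \eqref{HS} (which is \eqref{HSm} truncated modulo $t^{2s}$), the failure of \eqref{ec} yields failure of (2) and (3) together with (1). The remaining subcases are $A = A^1$ and $A = A^0$ with $M \in \{Z_2 \oplus \kk^m, B_2 \oplus \kk^m, \kk^m\}$.

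The main obstacle is these last subcases, where $\vp$ is not determined by $(A, M)$ and $\Lambda$ ranges over an infinite family. My plan is to use the explicit descriptions in Theorems \ref{A1}, \ref{A0Z}, \ref{A0B}, \ref{A0s} to write $\Lambda$ in a concrete form (free product with a free algebra, algebra defined by a twisted potential, or a form reducible to the one-relation case of Theorem \ref{MV_theor}) and analyze the generalized Koszul complex directly. For each such form, one verifies that exactness at $K_2$, combined with the tensor-power calculations of Lemmas \ref{B2_ts}, \ref{B11_ts}, \ref{Z2_ts}, \ref{B1_ts} controlling the higher $M^{\otimes_A n}$, propagates to exactness at every $K_n$; and that the Hilbert series identity similarly propagates via comparison of coefficients using the known formulas for $\HH_A$ and $\HH_M$. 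The delicate point is ruling out homological pathology appearing only at higher positions, i.e.\ showing that in this restricted setting the extra condition (together with the structural restrictions on $M$) is genuinely sufficient for full Koszulity, so that no failure of $s$-Koszulity undetectable by (2) or (3) can occur.
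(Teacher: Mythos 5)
Your skeleton coincides with the paper's: the forward implications are formal, the classification reduces everything to $A\in\{A^0,A^1\}$, the pair $(A^0,B_1\oplus\kk^m)$ is killed by Proposition \ref{A0B1_K}, and the remaining families are handled by showing that any failure of exactness of the generalized Koszul complex is visible at position $2$. However, two essential pieces of the argument are missing from your plan, and one of them as you state it would not quite suffice.

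First, for the Hilbert-series direction it is not enough that ``exactness at $K_2$ propagates to exactness at every $K_n$.'' The identity \eqref{HSm} only records the Euler characteristic of each graded piece $K_{\bullet,k}$, so homology occurring at several homological positions in the same internal degree could cancel and leave the coefficient of $t^k$ equal to zero. What is actually needed (and what the paper proves) is the sharper, degree-lowering statement: a homology class in $K_i$ with $i>2$ of internal degree $j$ forces a homology class in $K_2$ of internal degree strictly less than $j$. Then, at the minimal internal degree $k$ where some $K_{\bullet,k}$ fails to be exact, the failure is concentrated at position $2$, so the coefficient of $t^k$ in $\big(\HH_A(t^s)-t\HH_M(t^s)\big)\HH_{\Lambda}(t)$ equals $\dim_\kk\Ker\big(d_1|_{K_{2,k}}\big)-\dim_\kk\Im\big(d_2|_{K_{3,k}}\big)\neq 0$. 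Your phrase about the identity ``propagating via comparison of coefficients'' does not supply this refinement.

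Second, the tensor-power lemmas you cite determine the bimodule $M$ but say nothing about the higher syzygies of $\Lambda$, and in the hardest case --- $A=A^0$ with $M\cong{}_{\nu^{-1}}D\oplus\kk^m$, i.e.\ $\Lambda\cong\kk\langle x_1,\dots,x_m\rangle*\bD(w)$ for a twisted potential $w$ --- the propagation claim is genuinely nontrivial and is the heart of the proof. There the complex has the form $\Lambda(-s-1)\rightarrow\Lambda^2(-s)\rightarrow\Lambda^{m+2}(-1)\rightarrow\Lambda$, a cycle in $K_3$ is an element $g$ with $y_1g=y_2g=0$, and the paper's argument is an induction which, assuming exactness at $K_2$ up to degree $\deg(g)$, constructs for every functional $\phi\in\bD(w)_i^*$ a unique $g_\phi\in\bD(w)_{\deg(g)-i}$ with $ug_\phi=\phi(u)g$; injectivity of $\phi\mapsto g_\phi$ then forces $\dim_\kk\bD(w)_i\le\dim_\kk\bD(w)_{\deg(g)-i}$ for all $i$, contradicting the infinite-dimensionality of $\bD(w)$. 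Nothing in your outline produces this (or a substitute) argument, so the central case remains unproved in your proposal; the cases $A=A^1$ and $A=A^0$ with $M\cong B_2\oplus\kk^m$ are, by contrast, routine once the explicit differentials are written out.
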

\begin{proof} Note that the grading on $\Lambda$ induces a grading on the terms of its generalized Koszul complex $\cdots\xrightarrow{d_2} K_2\xrightarrow {d_1}K_1\xrightarrow{d_0} K_0$.
We will prove the following assertion. If there is some homogeneous $u\in K_i$, $i>2$ such that $d_{i-1}(u)=0$ and $u\not\in \Im d_i$, then there is sum $u_0\in K_2$ such that $d_1(u_0)=0$, $u_0\not\in\Im d_2$ and $\deg(u_0)<\deg(u)$.
Let us first suppose that this fact is valid. Then the exactness in the second term obviously implies the exactness of $K$ in all terms $i\ge 2$ while the generalized Koszul complex is always exact in the first term and has homology isomorphic to $\kk$ in the zero term.
Suppose now that $K$ is not exact. Let us denote by $K_{i,j}$ the $j$-degree component of $K_i$ with respect to the grading induced by the grading of $\Lambda$. Let us consider minimal $k>0$ such that  the complex $\cdots\xrightarrow{d_2|_{K_{3,k}}} K_{2,k}\xrightarrow {d_1|_{K_{2,k}}}K_{1,k}\xrightarrow{d_0|_{K_{1,k}}} K_{0,k}$ is not exact. Then this complex is exact in all terms except the second one, and hence the coefficient of $t^k$ in $\Big(\HH_A(t^s)-t\HH_M(t^s)\Big)\HH_{\Lambda}(t)$ equals to $\dim_\kk\Ker\left(d_1|_{K_{2,k}}\right)-\dim_\kk\Im\left(d_2|_{K_{3,k}}\right)\not=0$.

Note that the assertion of the theorem is obvious if either $A\not\cong A^0,A^1$ or $A=A^0$, $M=\kk^m$ since in these cases $K$ is automatically exact in degrees $i>2$. If $A=A^0$ and $M=B_1\oplus \kk^m$, than $\Lambda$ does not satisfy the extra condition by Proposition \ref{A0B1_K}, and hence it is not $s$-Koszul, $K$ is not exact in the second term, and $\Big(\HH_A(t^s)-t\HH_M(t^s)\Big)\HH_{\Lambda}(t)\not=1$.

Let us now consider the case where $A=A^1$ and $M=A/(x)\oplus\kk^m$, i.e. $\Lambda=\kk\langle x_1,\dots,x_m,y_1\rangle/(f,y_1^s)$ and $f$ does not satisfy conditions listed in Theorem \ref{A1}. In this case the generalized Koszul complex has the form
\begin{multline*}
\cdots\xrightarrow{\scriptsize y_1}\Lambda(-3s)\xrightarrow{\scriptsize y_1^{s-1}}\Lambda(-2s-1)\xrightarrow{\scriptsize y_1}\Lambda(-2s)\xrightarrow{\scriptsize y_1^{s-1}}\Lambda(-s-1)\xrightarrow{\scriptsize\begin{pmatrix}y_1\\0\end{pmatrix}}\Lambda^2(-s)\\
\xrightarrow{\scriptsize\begin{pmatrix}y_1^{s-1}&0&\cdots&0\\f^{y_1}&f^{x_1}&\cdots&f^{x_m}\end{pmatrix}}\Lambda^{m+1}(-1)\xrightarrow{\scriptsize\begin{pmatrix}y_1&x_1&\cdots&x_m\end{pmatrix}}\Lambda,
\end{multline*}
where $f=y_1f^{y_1}+\sum\limits_{i=1}^mx_if^{x_i}$.
If there is some element $g\in K_{2i,j}=\Lambda(-is)_j=\Lambda_{j-is}$ with $i>1$ such that $d_{2i-1}(g)=0$ and $g\not\in\Im(d_{2i})$, then we have $y_1^{s-1}g=0$ and $g\not\in y_1\Lambda$,
and hence $(g,0)\in K_{2,j-(i-1)s}=\Lambda^2(-s)_{j-(i-1)s}$ satisfies the conditions $d_1(g,0)=0$ and $(g,0)\not\in\Im(d_{2})$. Suppose now that $g\in K_{2i+1,j}=\Lambda(-is-1)_j=\Lambda_{j-is-1}$ with $i\ge 1$ is such that $d_{2i}(g)=0$ and $g\not\in\Im(d_{2i+1})$, i.e. $y_1g=0$ and $g\not\in y_1^{s-1}\Lambda$. Let us choose maximal $0\le l<s-1$ such that $g\in y_1^l\Lambda$, i.e. $g=y_1^lg'$ and $g'\not\in y_1\Lambda$. Then we have $y_1^{s-1}g'=y_1^{s-1-l}g=0$, and hence $(g',0)\in K_{2,j-(i-1)s-k-1}=\Lambda^2(-s)_{j-(i-1)s-k-1}$ satisfies the conditions $d_1(g',0)=0$ and $(g',0)\not\in\Im(d_{2})$.

Suppose now that $A=A^0$ and $M={}_{\nu^{-1}}D\oplus\kk^m$ for some graded automorphism $\nu$ of $A^0$, i.e. $\Lambda=\kk\langle x_1,\dots,x_m\rangle*\bD(w)$ for some $\nu$-twisted potential $w$ on a two dimensional space $U$ with the basis $y_1,y_2$ that does not satisfy conditions listed in Theorem \ref{A0Z}.
Let us introduce $f_{ij}=(f_{y_i}\otimes f_{y_j}\otimes 1^{\otimes (s-1)})w$ for $1\le i,j\le 2$. Then the definition of a $\nu$-twisted potential implies $f_1=y_1f_{11}+y_2f_{12}=f_{11}\nu^{-1}(y_1)+f_{21}\nu^{-1}(y_2)$ and $f_2=y_1f_{21}+y_2f_{22}=f_{12}\nu^{-1}(y_1)+f_{22}\nu^{-1}(y_2)$.
In particular, we have $\bD(w)=\kk\langle y_1,y_2\rangle/(f_1,f_2)$.
In this case the generalized Koszul complex has the form
$$
\Lambda(-s-1)\xrightarrow{\scriptsize\begin{pmatrix}\nu^{-1}(y_1)\\\nu^{-1}(y_2)\end{pmatrix}}\Lambda^2(-s)
\xrightarrow{\scriptsize\begin{pmatrix}f_{11}&f_{21}&0&\cdots&0\\
f_{12}&f_{22}&0&\cdots&0\end{pmatrix}}\Lambda^{m+2}(-1)\xrightarrow{\scriptsize\begin{pmatrix}y_1&y_2&x_1&\cdots&x_m\end{pmatrix}}\Lambda.
$$
Suppose that there is some nonzero element $g\in K_{3,j}=\Lambda(-s-1)_j=\Lambda_{j-s-1}$ such that $d_2(g)=0$, i.e. $y_1g=y_2g=0$. Let us pick such an element $g$ with minimal $j$.
We need to show that there exist homogeneous $h_1,h_2\in \bD(w)$ such that $d_1(h_1,h_2)=0$ and $(h_1,h_2)\not\in\Im d_2$, i.e. $f_{11}h_1+f_{21}h_2=f_{12}h_1+f_{22}h_2=0$ and there is no $h\in \bD(w)$ such that $h_1=\nu^{-1}(y_1)h$ and $h_2=\nu^{-1}(y_2)h$, and $\deg(h_1)=\deg(h_2)\le \deg(g)$.
Suppose that the last mentioned assertion is not satisfied. We will show by induction on $i$ that for any $\phi\in \bD(w)_i^*$ there is a unique $g_{\phi}\in \bD(w)_{\deg(g)-i}$ such that $ug_{\phi}=\phi(u)g$ for any $u\in\bD(w)_i$. This will immediately yield a contradiction with the fact that $\bD(w)$ has infinite dimension.
The case $i=0$ is trivial, because $\bD(w)_0=\kk$, and hence one has $g_{\phi}=\phi(1)g$. Suppose now that we have proved the required assertion for functionals of degree $k$ less than $i$. Let us pick some $\phi\in \bD(w)_i^*\subset \bD(w)_{i-1}^*\otimes U^*$ and write it in the form $\phi=\phi_1r_1+\phi_2r_2$ with $\phi_1,\phi_2\in\bD(w)_{i-1}^*$, where $r_1,r_2$ is the basis of $U^*$ dual to $\nu^{-1}(y_1),\nu^{-1}(y_2)$. Let us prove that $f_{11}g_{\phi_1}+f_{21}g_{\phi_2}=f_{12}g_{\phi_1}+f_{22}g_{\phi_2}=0$.
If $i+s-2>\deg(g)$, then $f_{11}g_{\phi_1}+f_{21}g_{\phi_2}$ and $f_{12}g_{\phi_1}+f_{22}g_{\phi_2}$ belong to $(y_1,y_2)g=0$ by the definition of $g_{\phi_1}$ and $g_{\phi_2}$. Suppose that $i+s-2\le\deg(g)$.
By our induction hypothesis, it is enough to show that $u(f_{11}g_{\phi_1}+f_{21}g_{\phi_2})=u(f_{12}g_{\phi_1}+f_{22}g_{\phi_2})=0$ for any $u$ of degree $\deg(g)-i-s+2$. We have
\begin{multline*}
u(f_{11}g_{\phi_1}+f_{21}g_{\phi_2})=\phi_1(uf_{11})g+\phi_2(uf_{21})g=\big(\phi_1(uf_{11})r_1\nu^{-1}(y_1)+\phi_2(uf_{21})r_2\nu^{-1}(y_2)\big)g\\
=\phi\big(uf_{11}\nu^{-1}(y_1)+uf_{21}r_2\nu^{-1}(y_2)\big)g=\phi(uf_1)g=0.
\end{multline*}
The second equality can be proved in the same way. Our assumptions imply that there is some $g_{\phi}$ such that $g_{\phi_1}=\nu^{-1}(y_1)g_{\phi}$ and $g_{\phi_2}=\nu^{-1}(y_2)g_{\phi}$. Then an arbitrary element $u\in\bD(w)_i$ can be presented in the form $u=u_1\nu^{-1}(y_1)+u_2\nu^{-1}(y_2)$, and hence we get
$$
ug_{\phi}=u_1g_{\phi_1}+u_2g_{\phi_2}=\big(\phi_1(u_1)+\phi_2(u_2)\big)g=\phi\big(u_1\nu^{-1}(y_1)+u_2\nu^{-1}(y_2)\big)g=\phi(u)g.
$$
It remains to show that $g_{\phi}$ is unique that is equivalent to the fact that if $h\in \bD(w)_{\deg(g)-i}$ and $uh=0$ for any $u\in \bD(w)_{i}$, then $h=0$. Let us pick some $u$ of maximal degree such that $uh\not=0$. Then we have $y_1uh=y_2uh=0$ and $\deg(uh)<\deg(g)$ by our assumptions that contradicts the minimality assumption on the degree of $g$. Thus, we have proved the required assertion in the case where $A=A^0$ and $M={}_{\nu^{-1}}D\oplus\kk^m$.

Finally, suppose that $A=A^0$ and $M=B_2\oplus\kk^m$, i.e. $\Lambda=\kk\langle x_1,\dots,x_m,y_1,y_2\rangle/(y_1f,fy_2)$ and $f$ does not satisfy conditions listed in item 2 of Theorem \ref{A0B}. In this case the generalized Koszul complex has the form
$$
\Lambda(-s-1)\xrightarrow{\scriptsize\begin{pmatrix}y_2\\0\end{pmatrix}}\Lambda^2(-s)\xrightarrow{\scriptsize\begin{pmatrix}f&0&0&\cdots&0\\f^{y_1}y_2&f^{y_2}y_2&f^{x_1}y_2&\cdots&f^{x_m}y_2\end{pmatrix}}\Lambda^{m+2}(-1)\xrightarrow{\scriptsize\begin{pmatrix}y_1,y_2&x_1&\cdots&x_m\end{pmatrix}}\Lambda,
$$
where $f=y_1f^{y_1}+y_2f^{y_2}+\sum\limits_{i=1}^mx_if^{x_i}$.
Suppose that there is some nonzero element $g\in K_{3,j}=\Lambda(-s-1)_j=\Lambda_{j-s-1}$ such that $d_2(g)=0$, i.e. $y_2g=0$. Then $(0,g)\in K_{2,j-1}=\Lambda^2(-s)_{j-1}$ satisfies the conditions $d_1(0,g)=0$ and $(0,g)\not\in\Im(d_{2})$.
\end{proof}

Our next goal is to verify what pairs $(A,M)$ from the $s$-homogeneous triples corresponding to $s$-homogeneous algebras with two relations admit an $s$-Koszul algebra $\Lambda$ such that $(\Lambda^!)^{(s)})=A$ and $(\Lambda^!)^{(s,1)}=M$ that is equivalent to the isomorphism $\Ext_{\Lambda}^*(\kk,\kk)\cong A\ltimes M$ (see Subsection \ref{KozCY}). We will solve this question for all pairs $(A,M)$ except pairs of the form $(A^0,{}_{\nu^{-1}}D\oplus\kk^m)$, where $\nu(y_1)=\lambda y_1$ and $\nu(y_2)=\lambda^{-s}y_2$ for $\lambda\in\kk$ such that $\lambda^{s+1}$ is a primitive root of unity of degree $s-1$.
\begin{itemize}
    \item Let $A=A^1$ and $M=A/(x)\oplus\kk^m$, i.e. $\Lambda=\kk\langle x_1,\dots,x_m,y_1\rangle/(f,y_1^s)$ and $f$ does not satisfy conditions listed in Theorem \ref{A1}. It is not difficult to see that $\Lambda$ is $s$-Koszul for $f=x_1y_1^{s-2}x_2$. Note that we have given an example of $s$-Koszul $\Lambda$ in the case $m> 2$. If $m=2$ and $s=3$, then $\Lambda$ can not be $s$-Koszul, because $s$-Koszulity implies
    \begin{multline*}
    \HH_\Lambda(t)=\frac{1}{1-2t+2t^3+(t^{6}-t^{4})\sum\limits_{i=0}^\infty t^{3i}}=\frac{1-t^3}{1-2t+t^3+t^4-t^{6}}\\
    =1 + 2 t + 4 t^2 + 6 t^3 + 9 t^4 + 12 t^5 + 15 t^6 + 17 t^7 + 17 t^8 + 13 t^9 + 3 t^{10} - 16 t^{11} + O(t^{12})
    \end{multline*}
    that is impossible. If $m=2$ and $s>3$, then $\Lambda$ is $s$-Koszul for $f=y_1x_1^{s-1}-x_1y_1^{s-2}x_1$. To show this, let us choose the lexicographical order on monomials induced by the order $y_1>x_1$. Then one can show that the elements $y_1^s$, $y_1x_1^{s-1}-x_1y_1^{s-2}x_1$ and $y_1^{s-1}(x_1y_1^{s-3})^ky_1x_1$ ($k\ge 1$) constitute a Gr\"obner of the ideal $(y_1^s,f)$. Now one can directly show that the generalized Koszul complex is exact in the second degree or calculate the Hilbert series of the algebra $\bar\Lambda=\kk\langle x_1,y_1\rangle/(y_1^s, y_1x_1^{s-1}-x_1y_1^{s-2}x_1, \{y_1^{s-1}(x_1y_1^{s-3})^ky_1x_1\}_{k\ge 1})$ using the Anick resolution and get that $$\HH_{\Lambda}(t)=\HH_{\bar\Lambda}(t)=\frac{1}{1-2t+2t^s+(t^{2s}-t^{s+1})\sum\limits_{i=0}^\infty t^{si}}=\frac{1-t^s}{1-2t+t^s+t^{s+1}-t^{2s}}.$$
    
    \item Let $A=A^0$ and $M={}_{\nu^{-1}}D\oplus\kk^m$, where $\nu$ is an automorphism satisfying one of the conditions of Lemmas \ref{CYs} and \ref{CYns}, i.e. $\Lambda=\kk\langle x_1,\dots,x_m\rangle*\bD(w)$ for some $\nu$-twisted potential $w$ on a two dimensional space with the basis $y_1,y_2$ that does not satisfy conditions listed in Theorem \ref{A0Z}.
    In this case $\Lambda$ is $s$-Koszul if and only if the algebra $\bD(w)$ is $\nu$-twisted $3$-Calabi-Yau.
    
    If $\nu(y_1)=\lambda y_1$ and $\nu(y_2)=\lambda(y_1+y_2)$ for some $\lambda\in\kk$ such that either $s>3$ and $\lambda^{s+1}=1$ or $s=3$ and $\lambda^2=1$, then the twisted potential $w$ from the proof of Lemma \ref{CYns} defines a twisted $3$-Calabi-Yau algebra. Indeed, $(f_1\otimes 1^{\otimes s})w$ and $(f_2\otimes 1^{\otimes s})w$, where $f_1,f_2$ is the basis of $V^*$ dual to the basis $y_1,y_2$, have largest monoms $y_2^2y_1^{s-2}$ and $y_2y_1^{s-1}$ respectively. Here we use the lexicographical order induced by the order $y_2>y_1$. Then one can check that these elements constitute a Gr\"obner basis of the ideal $\big((f_1\otimes 1^{\otimes s})w,(f_2\otimes 1^{\otimes s})w\big)$, and hence $\Lambda$ has the required Hilbert series.

The case $\nu(y_1)=\lambda_1 y_1$ and $\nu(y_2)=\lambda_2 y_2$ with $\lambda_1^k\lambda_2^{s+1-k}=1$ for some $2\le k\le s-1$ can be considered in the same way using the potential defined in the proof of Lemma \ref{CYs}. In this case $(f_1\otimes 1^{\otimes s})w$ and $(f_2\otimes 1^{\otimes s})w$ have largest monoms $y_2^{s+1-k}y_1^{k-1}$ and $y_2^{s-k}y_1^{k}$ respectively.
    
    Suppose now that  $\lambda_1=1$, $\lambda_2^s=1$. Then the potential $w=y_1^{s+1}+\sum\limits_{i=0}^{s}\lambda_2^iy_2^iy_1y_2^{s-i}$ defines a $\nu$-twisted $3$-Calabi-Yau algebra. Really, we have $\bD(w)=\kk\langle y_1,y_2\rangle/(y_1^s+y_2^s,\sum\limits_{i=0}^{s-1}\lambda_2^iy_2^iy_1y_2^{s-i-1})$. Setting $y_1>y_2$ and choosing the corresponding lexicographical order on monoms, we get the Gr\"obner basis $$y_1^s+y_2^s,\,\,\sum\limits_{i=0}^{s-1}\lambda_2^iy_2^iy_1y_2^{s-i-1},\,\,\sum\limits_{i=0}^{s-2}\lambda_2^iy_1^{s-1}y_2^{i+1}y_1y_2^{s-i-2}-\lambda_2^{s-1}y_2^{2s-1}.$$ Then the Hilbert series of $\bD(w)$ equals the Hilbert series of the algebra $\kk\langle y_1,y_2\rangle/(y_1^s, y_1y_2^{s-1},y_1^{s-1}y_2y_1y_2^{s-2})$. It can be easily verified that the Hilbert series of the last mentioned algebra is $\frac{1}{1-2t+2t^s-t^{s+1}}$.

Finally, in the case $\lambda_1\lambda_2^s=\lambda_1^s\lambda_2=1$ and $\lambda_1^k\lambda_2^{s+1-k}\not=1$ for any $2\le k\le s-1$ one has $\lambda_1=\lambda$ and $\lambda_2=\lambda^{-s}$ for some $\lambda\in\kk$ such that $\lambda^{s+1}$ is a primitive root of unity of degree $s-1$.
Due to the proof of Lemma \ref{CYs}, the algebra $\Lambda^!$ has an $s$-homogeneous triple with the first two components $(A^0,{}_{\nu^{-1}}D\oplus\kk^m)$ if and only if $\Lambda\cong \kk\langle x_1,\dots,x_m,\rangle*\bD(w)$ with $w=\sum\limits_{i=0}^{s+1}\lambda^iy_1^iy_2y_1^{s-i}+\sum\limits_{i=0}^{s+1}\lambda^{-is}y_2^iy_1y_2^{s-i}$. Thus, the algebra $A^0\ltimes \big({}_{\nu^{-1}}D\oplus\kk^m\big)$ can be $\Ext$-algebra of some $s$-Koszul algebra if and only if the algebra
$$\bD(w)=\frac{\kk\langle y_1,y_2\rangle}{\left(y_2^s+\sum\limits_{i=0}^{s}\lambda^{i+1}y_1^{i}y_2y_1^{s-i-1},y_1^s+\sum\limits_{i=0}^{s}\frac{y_2^{i}y_1y_2^{s-i-1}}{\lambda^{(i+1)s}}\right)}$$
is $s$-Koszul. The verification of this is a tedious task, because the Gr\"obner basis of this algebra grows fast while the degree increase. We believe that $\bD(w)$ is an $s$-Koszul (and, in particular, twisted CY) algebra with infinite Gr\"obner basis, but the proof of this is out of this work.
    
    \item Let $A=A^0$ and $M=B_2\oplus\kk^m$, i.e. $\Lambda=\kk\langle x_1,\dots,x_m,y_1,y_2\rangle/(y_1f,fy_2)$ and $f$ does not satisfy conditions listed in item 2 of Theorem \ref{A0B}. It is not difficult to show that $\Lambda$ is $s$-Koszul for $f=y_1y_2^{s-2}$.
    
    \item Let $A=A^0$ and $M=\kk^m$, i.e. $\Lambda=\kk\langle x_1,\dots,x_m\rangle/(f_1,f_2)$, where and $f_1$ and $f_2$ do not satisfy conditions listed in Theorem \ref{A0s}.  It is not difficult to show using  Gr\"obner bases that $\Lambda$ is $s$-Koszul for $f_1=x_1^{s-1}x_2$, $f_2=x_1^{s-1}x_3$. If $s\ge 5$, then $\Lambda$ is $s$-Koszul also for $f_1=x_1x_2x_1x_2^{s-3}$, $f_2=x_1^2x_2^{s-2}$. Thus, we have shown that $\Lambda$ can be $s$-Koszul if $m\ge 3$ or $s\ge 5$. If $m=2$ and $s=3,4$, then $\Lambda$ is $s$-Koszul if and only if its Hilbert series equals
    $$
    \HH_\Lambda(t)=\frac{1}{1-2t+2t^3}=1 + 2 t + 4 t^2 + 6 t^3 + 8 t^4 + 8 t^5 + 4 t^6 - 8 t^7  + O(t^{8})
    $$
    or
    \begin{multline*}
        \HH_\Lambda(t)=\frac{1}{1-2t+2t^4}=1 + 2 t + 4 t^2 + 8 t^3 + 14 t^4 + 24 t^5 + 40 t^6 + 64 t^7 + 100 t^8 + 152 t^9 + 224 t^{10}\\
        + 320 t^{11} + 440 t^{12} + 576 t^{13} + 704 t^{14} + 768 t^{15} + 656 t^{16} + 160 t^{17} - 1088 t^{18} + O(t^{19})
    \end{multline*}
    that is impossible.
\end{itemize}

\section{Main result, consequences and open problems}

In this section, for the convenience of the reader, we give a summary  of our classification.

\subsection{The main result}\label{main_res}
Here we collect the results of Theorems \ref{A4A5}, \ref{A6A70}, \ref{impos_struc}, \ref{A7q}, \ref{A1}, \ref{A0Z}, \ref{A0B}, and \ref{A0s} to get the following classification.

Let $\Lambda=T_\kk V/(f_1, f_2)$, where $f_1$ and $f_2$ are two linearly independent elements of $V^{\ot s}$. Let us denote by $A$ the algebra $(\Lambda^!)^{(s)}$ and by $M$ the $A$-bimodule $(\Lambda^!)^{(s,1)}$. Then modulo isomorphism one of the following conditions holds:
\begin{enumerate}
    \item $(A,M)=\big(\kk\langle x,y\rangle/(xy,yx), J(A)[1]\oplus\kk^m\big),\Lambda\cong\kk\langle x_1,\dots,x_m,y_1,y_2\rangle/(y_1^s,y_2^s)$.
    \item $(A,M)=\left(\kk\langle x,y\rangle/(xy,yx), {}_{\sigma}J(A)[1]\oplus\kk^m\right)$, where $\sigma$ is the automorphism of $A$ interchanging $x$ and $y$. In this case $s=2t$ and
    $$\Lambda\cong\kk\langle x_1,\dots,x_m,y_1,y_2\rangle/\big((y_1y_2)^t,(y_2y_1)^t\big).$$
    \item $(A,M)=\left(\kk\langle x,y\rangle/(x^2,y^2), {}_{\sigma}J(A)[1]\oplus\kk^m\right)$, where $\sigma$ is the automorphism of $A$ interchanging $x$ and $y$. In this case $s=2t+1$ and
    $$\Lambda\cong\kk\langle x_1,\dots,x_m,y_1,y_2\rangle/\big((y_1y_2)^ty_1,(y_2y_1)^ty_2\big).$$
    \item $(A,M)=\big(\kk\langle x,y\rangle/(x^2,yx), J(A)[1]\oplus\kk^m\big),\Lambda\cong\kk\langle x_1,\dots,x_m,y_1,y_2\rangle/(y_1^s,y_2y_1^{s-1})$.
    \item $(A,M)=\big(\kk\langle x,y\rangle/(x^2,xy), J(A)[1]\oplus\kk^m\big),\Lambda\cong\kk\langle x_1,\dots,x_m,y_1,y_2\rangle/(y_1^s,y_1^{s-1}y_2)$.
    \item $(A,M)=\big(\kk\langle x,y\rangle/(x^2,xy-qyx), M(p)\oplus\kk^m\big)$, where $p,q\in\kk^*$, $p^s=q$ and $M(p)$ is defined in Subsection \ref{A7sec}. In this case
    $$\Lambda\cong\kk\langle x_1,\dots,x_m,y_1,y_2\rangle/\left(y_1^s,\sum\limits_{i=0}^{s-1}p^{s-i-1}y_1^iy_2y_1^{s-i-1}\right).$$
    \item $(A,M)=\big(\kk\langle x,y\rangle/(x^2,xy,yx), A/(x)\oplus\kk^m\big)$. Such pairs correspond to algebras of the form $\Lambda\cong\kk\langle x_1,\dots,x_m,y_1\rangle/\left(y_1^s,f\right)$ with $f\in \kk\langle x_1,\dots,x_m,y_1\rangle_s$, where $f\not=\sum\limits_{i=0}^{s-1}\alpha^i\beta^{s-i-1}y_1^igy_1^{s-i-1}$ and $f\not=g^s+\alpha y_1^s$ for any linear polynomial $g$ and $\alpha,\beta\in\kk$.
    \item $(A,M)=\big(\kk\langle x,y\rangle/(x,y)^2, {}_{\nu^{-1}}D\oplus\kk^m\big)$, where $D$ is the bimodule defined in Subsection \ref{CY} and $\nu$ is a graded automorphism of $\kk\langle x,y\rangle/(x,y)^2$ defined in one of the following ways:
\begin{enumerate}
\item\label{gen} $\nu(x)=\lambda_1x$, $\nu(y)=\lambda_2y$, where $\lambda_1^k\lambda_2^{s+1-k}=1$ for some $2\le k\le s-1$;
\item\label{1ss1} $\nu(x)=\lambda_1x$, $\nu(y)=\lambda_2y$, where $\lambda_1\lambda_2^s=\lambda_1^s\lambda_2=1$ and $\lambda_1^k\lambda_2^{s+1-k}\not=1$ for any $2\le k\le s-1$;
\item\label{1s} $\nu(x)=\lambda_1x$, $\nu(y)=\lambda_2y$, where $\lambda_1=1$, $\lambda_2^{s}=1$ and $\lambda_2^k\not=1$ for any $1\le k\le s-1$;
\item\label{genns} $\nu(x)=\lambda x$, $\nu(y)=\lambda (x+y)$, where $\lambda^{s+1}=1$ if $s>3$ and $\lambda=\pm 1$ if $s=3$.
\end{enumerate}
    Such pairs correspond to algebras of the form $\Lambda\cong\kk\langle x_1,\dots,x_m\rangle*\bD(w)$ with a $\nu$-twisted potential $w\in\kk\langle y_1,y_2\rangle_{s+1}$ on the two dimensional space $U$ with the basis $y_1,y_2$, where $w\not= g_1g^s+g_2f$, $w\not= (g_1g_2)^{t+1}+\alpha(g_2g_1)^{t+1}$, and $w\not=(g_1g_2)^tg_1+\alpha(g_2g_1)^tg_2$ for any $g_1,g_2,g\in U$, $f\in \kk\langle y_1,y_2\rangle_s$ and $\alpha\in\kk^*$.
    Here $\nu:U\rightarrow U$ is the composition $U\xrightarrow{\theta}A_1\xrightarrow{\nu}A_1\xrightarrow{\theta^{-1}}U$ with the isomorphism $\theta:U\rightarrow A_1$ defined by the equalities $\theta(y_1)=x$ and $\theta(y_2)=y$.
In the cases \ref{gen} and \ref{genns} one has as minimum one algebra $\Lambda$ such that $(\Lambda^!)^{(s)}=A_0$ and $(\Lambda^!)^{(s,1)}={}_{\nu^{-1}}D\oplus\kk^m$. In the cases \ref{1ss1} and \ref{1s} one has exactly one such an algebra. Namely, in the case \ref{1ss1} the unique $\nu$-twisted potential algebra satisfying the required properties is
$$\Lambda=\kk\langle x_1,\dots,x_m\rangle*\frac{\kk\langle y_1,y_2\rangle}{\left(y_2^s+\sum\limits_{i=0}^{s}\lambda_1^{i+1}y_1^{i}y_2y_1^{s-i-1},y_1^s+\sum\limits_{i=0}^{s}\lambda_2^{i+1}y_2^{i}y_1y_2^{s-i-1}\right)}$$
while in the case \ref{1s} the unique algebra  satisfying the required properties is
$$
\Lambda=\kk\langle x_1,\dots,x_m\rangle*\frac{\kk\langle y_1,y_2\rangle}{\left(y_1^s+y_2^s,\sum\limits_{i=0}^{s-1}\lambda_2^iy_2^iy_1y_2^{s-i-1}\right)}.
$$
    \item $(A,M)=\big(\kk\langle x,y\rangle/(x,y)^2, B_1\oplus\kk^m\big)$, where $B_1$ is the bimodule defined in Subsection \ref{Bsec}. Such pairs correspond to algebras of the form $\Lambda\cong\kk\langle x_1,\dots,x_m,y_1\rangle/(y_1f,fy_1)$ with $f\in \kk\langle x_1,\dots,x_m,y_1\rangle_{s-1}$, where $f\not=(gy_1)^{t-1}g$ for any linear polynomial $g$.
    \item $(A,M)=\big(\kk\langle x,y\rangle/(x,y)^2, B_2\oplus\kk^m\big)$, where $B_2$ is the bimodule defined in Subsection \ref{Bsec}. Such pairs correspond to algebras of the form $\Lambda\cong\kk\langle x_1,\dots,x_m,y_1,y_2\rangle/(y_1f,fy_2)$ with $f\in \kk\langle x_1,\dots,x_m,y_1,y_2\rangle_{s-1}$, where $f\not=\alpha(y_2y_1)^t$, $f\not=\alpha y_1^{s-1}$, and $f\not=\alpha y_2^{s-1}$ for any $\alpha\in\kk^*$.
    \item $(A,M)=\big(\kk\langle x,y\rangle/(x,y)^2, \kk^m\big)$. Such pairs correspond to all algebras that do not appear in one of the previous items. In more details this condition is written down in Theorem \ref{A0s}.
\end{enumerate}

 Let us now collect the results on the $s$-Koszulity of $s$-homogeneous algebras with two relations.

\begin{itemize}
    \item If one of Conditions 1--6 above is satisfied, then $\Lambda$ is an $s$-Koszul algebra.
    \item If Conditions 7 is satisfied, then $\Lambda$ can be $s$-Koszul in the case where $m>2$ or $s>3$. In the case $(m,s)=(2,3)$ the algebra $\Lambda$ is not $s$-Koszul.
    \item If Conditions 8 is satisfied, then $\Lambda$ can be $s$-Koszul in the cases \ref{gen}, \ref{1s} and \ref{genns}. It is not known if $\Lambda$ is $s$-Koszul or not in the case \ref{1ss1}
    \item if Condition 9 above is satisfied, then $\Lambda$ does not satisfy the extra condition, and hence it is not $s$-Koszul.
    \item If Conditions 10 is satisfied, then $\Lambda$ can be $s$-Koszul for any $m\ge 2$ and $s\ge 3$.
    \item If Conditions 11 is satisfied, then $\Lambda$ can be $s$-Koszul in the case where $m>2$ or $s>4$. In the cases $(m,s)=(2,3)$ and $(m,s)=(2,4)$ the algebra $\Lambda$ is not $s$-Koszul.
    \item If one of Conditions 7, 8, 10, 11 is satisfied, then the $s$-Koszulity of $\Lambda$ is equivalent to the exactness of the generalized Koszul complex in the second term and, at the same time, to the equality $\HH_{\Lambda}(t)=\Big(\HH_A(t^s)-t\HH_M(t^s)\Big)^{-1}$, where the right hand side of the last mentioned equality can be easily calculated in each case.
\end{itemize}

Note that if $\Lambda$ is $s$-Koszul, then, as it was mentioned above, $\Ext_{\Lambda}^*(\kk,\kk)$ is the trivial extension of $A$ by $M$ with an appropriate grading. Thus, we have described a set of possible $\Ext$-algebras of connected $s$-Koszul algebras with two relations and show that all of these algebras are possible except the algebras arising in the case \ref{1ss1}.

\subsection{Some consequences and problems}

Here we mention some results that follow from our classification and state some problems that seem to be interesting for us.

Note first of all that the number of different quadratic algebras that can appear as first components of an $s$-homogeneous triple $\FF(\Lambda^!)$ is very restricted: only the algebras $A^0$, $A^1$, $A^4$, $A^5$, $A^6$ and $A^7(q)$ ($q\in\kk$) are possible. Moreover, all of this algebras are Koszul and have minimum two relations.

\begin{Ques} Given a quadratic algebra $A$, what are the sufficient and necessary conditions for $A$ to be isomorphic to $\Lambda^{(s)}$ for some $s$-homogeneous algebra $\Lambda$? Does $A$ have to be Koszul? In the case where the number of generators of $A$ is given, how many relations can $A$ have?
\end{Ques}

As it was stated above, an $\Ext$-algebra of an $s$-Koszul algebra with two relations is isomorphic to $A\ltimes M$, where the pair $(A,M)$ is described in one of Conditions 1--11 of the previous subsection. For each such a pair it is determined if it really can be $\Ext$-algebra with only one exception. This exception motivates the next question.

\begin{Ques} Given $\lambda\in\kk$ such that $\lambda^{s+1}$ is a primitive root of unity of degree $s-1$, is the algebra
$$\bD(w)=\frac{\kk\langle y_1,y_2\rangle}{\left(y_2^s+\sum\limits_{i=0}^{s}\lambda^{i+1}y_1^{i}y_2y_1^{s-i-1},y_1^s+\sum\limits_{i=0}^{s}\frac{y_2^{i}y_1y_2^{s-i-1}}{\lambda^{(i+1)s}}\right)}$$
$s$-Koszul?
\end{Ques}

If the answer to this problem is positive, then we will get the following result. If there exists an $s$-homogeneous algebra $\Lambda$ such that $\big((\Lambda^!)^{(s)}),(\Lambda^!)^{(s,1)}\big)=(A^0,{}_{\nu^{-1}}D)$, then there exists an $s$-Koszul algebra $\Gamma$ such that $\big((\Gamma^!)^{(s)}),(\Gamma^!)^{(s,1)}\big)$. This assertion has a generalization to an arbitrary number of relations.

\begin{Ques} Suppose that there exists a $\nu$-twisted potential $w$ on an $n$-dimensional space $V$ such that the generalized Koszul complex of $\bD(w)$ has the form $$\bD(w)\rightarrow\bD(w)^n\rightarrow\bD(w)^n\rightarrow\bD(w)(\rightarrow\kk).$$
Does it follows from this that there exists a $\nu$-twisted potential $w'$ such that $\bD(w')$ is twisted $3$-CY?
\end{Ques}

In our classification, the algebras are divided into several concrete algebras and several series of algebras. It would be interesting to give for each of this series a criterion for $s$-Koszulity like in Theorem \ref{MV_theor}.

\begin{Ques} For the pairs $(A,M)$ satisfying one of the Conditions 7, 8, 10 and 11, give some easily verifiable criterion of $s$-Koszulity.
\end{Ques}

Our result allow to compute all possible Hilbert series of $s$-Koszul algebras with two relations that can be computed using \eqref{HSm}. Namely, the possible Hilbert series are:
\begin{multline*}
\frac{1-t^s}{1-mt+t^s+(m-2)t^{s+1}},\,\,\frac{1-t^s}{1-mt+t^s+(m-1)t^{s+1}-t^{2s}}\,\,(\mbox{$s>3$ or $m>2$}),\\
\frac{1}{1-mt+2t^s-t^{s+1}},\,\,\frac{1}{1-mt+2t^s}\,\,(\mbox{$s>4$ or $m>2$}).
\end{multline*}
All of these series are rational functions and this lead to the next problem (the same problem for Koszul algebras is stated in \cite[Conjecture 1]{PP}).

\begin{Ques} Is the Hilbert series rational for any $s$-Koszul algebra?
\end{Ques}

All the Hilbert and Poincar\'e series of $s$-homogeneous algebras with one relation were computed in \cite{Dic}.

\begin{Ques} Find all the possible Hilbert and Poincar\'e series of $s$-homogeneous algebras with two relations.
\end{Ques}

The computations of \cite{Dic}, in particular, imply that the global dimension of an $s$-homogeneous algebra with one relation can be two of infinity. Our results show that the global dimension of an $s$-Koszul algebra with two relations can be two, three or infinity.

\begin{Ques}
 Is the global dimension of an $s$-homogeneous restricted by some function depending on the number of relations in the case where it is finite?
Is the number of possible Hilbert Series of $s$-homogeneous algebras finite for given numbers of generators and relations? Are these assertion true for $s$-Koszul algebras in the case of a negative answer for arbitrary $s$-homogeneous algebras?
\end{Ques}

Finally, we have shown that if the $s$-homogeneous algebra $\Lambda$ has $m$ generators and $m^s-2$ relations, then it cannot be $s$-Koszul. On the other hand, if $s>4$ or $m>2$, then a generic $s$-homogeneous algebra with $m$ generators and two relations is $s$-Koszul. One can show that in the case where $s=3,4$ and $m=2$, a generic algebra is not $s$-Koszul.

\begin{Ques} For which triples $(s,m,k)$ of integer numbers there exists an $s$-Koszul algebra with $m$ generators and $k$ relations? For which triples $(s,m,k)$ of integer numbers a generic algebra with $m$ generators and $k$ relations is $s$-Koszul?
\end{Ques}

\noindent{{\bf Addresses:}
\newline 
Eduardo do Nascimento Marcos\\
IME-USP(Dept de Matem\'atica )
\newline
Rua de Mat\~ao 1010, Cidade Universit\'aria, S\~ao Paulo-SP
\newline
e-mail:  enmarcos@ime.usp.br
\newline
Yury Volkov \\
Saint-Petersburg State University\\
Universitetskaya nab. 7-9, St. Peterburg, Russia\\
e-mail:  wolf86\_666@list.ru}


\begin{thebibliography}{99}
\bibitem{AS} M. Artin, W. Schelter, {\it Graded algebras of global dimension $3$}, Adv. Math., 66, 1987, 171--216.

\bibitem{Bac} J. Backelin, {\it A distributiveness property of augmented algebras and some related homological
results}, Ph. D. Thesis, Stockholm, 1981.


\bibitem{BR} M. Butler, C. Ringel, {\it Auslander-Reiten sequences with few middle terms and applications
to string algebras}, Comm. Algebra, 15 (1--2), 1987, 145--179.

\bibitem{B0} R. Berger, {\it Weakly confluent quadratic algebras}, Alg. Rep. Theory, 1, 1998, 189--213.

\bibitem{B} R. Berger, {\it Koszulity for nonquadratic algebras},  J. Algebra, 239 (2), 2001,  705--734.

\bibitem{B2} R. Berger, {\it Gerasimov's theorem and N-Koszul algebras}, J. Lond. Math. Soc. (2), 79 (3), 2009, 631--648.

\bibitem{BS} R. Berger, A. Solotar, {\it A criterion for homogeneous potentials to be 3-Calabi-Yau}, arXiv:1203.3029.

\bibitem{Boc} R. Bocklandt, {\it Graded Calabi Yau algebras of dimension $3$}, J. Pure Appl. Alg., 212 (1), 2008, 14--32.

\bibitem{BSW} R. Bocklandt, T. Schedler, M. Wemyss, {\it Superpotentials and higher order derivations}, J. Pure Appl. Alg., 214 (9), 2010, 1501--1522.

\bibitem{Dic} W. Dicks, {\it On the cohiomology of one-relator associative algebras},  J. Algebra, 97, 1985,  79--100.

\bibitem{Gera} V. N. Gerasimov, {\it Free associative algebras and inverting homomorphisms of rings}, Amer. Math. Soc. Transl., 156, 1993, 1--77.

\bibitem{Ginz} V. Ginzburg, {\it Calabi-Yau Algebras}, arXiv:0612139.

\bibitem{GK} J. Goodman, U. Kr\"ahmer, {\it Untwisting a twisted Calabi-Yau algebra}, J. Pure Appl. Alg., 406, 2014, 272--289.

\bibitem{GMMZ} E. Green, E. N. Marcos, R. Martinez-Villa, P. Zhang, {\it $d$-Koszul algebras}, J. Pure Appl. Alg., 193,  2004,  141--162.


\bibitem{GW} E. Green, R. Q. Huang, {\it Projective resolutions of straightening closed algebras generated by minors}, Adv. Math., 10 (2), 1995, 314--333.

\bibitem{Iud} Na. Iyudu, S. Shkarin, {\it Potential algebras with few generators}, arXiv:1806.06829.

\bibitem{LR} S. Launois, L. Richard, {\it Twisted Poincar\'e duality for some quadratic Poisson algebras}, Lett. Math. Phys., 79 (2), 2007, 161--174.

\bibitem{Lev} T. Levasseur, {\it Some properties of non-commutative regular graded rings}, Glasgow Math. J., 34, 1992, 277--300.

\bibitem{LWW} L. Liu, Sh. Wang, Q. Wu, {\it Twisted Calabi-Yau property of Ore extensions}, J. Noncommut. Geom., 8 (2), 2014, 587--609.

\bibitem{MSV} E. Marcos, A. Solotar, Y. Volkov, {\it Generating degrees for graded projective resolutions}, J. Alg. Appl., 17 (10), 2018, 1850191.

\bibitem{MV} E. Marcos, Y. Volkov, {\it $s$-homogeneous algebras via $s$-homogeneous triples}, arXiv:1711.10664.

\bibitem{MS} I. Mori, S. Smith, {\it The classification of $3$-Calabi-Yau algebras with $3$ generators and $3$ quadratic relations}, Math. Z., 287, 2017, 215--241.

\bibitem{MU} I. Mori, K. Ueyama, {\it The classification of $3$-dimensional noetherian cubic Calabi-Yau algebras}, J. Pure Appl. Alg., doi:10.1016/j.jpaa.2018.08.009.

\bibitem{DimP} D. Piontkovskii, {\it On the Hilbert Series of Koszul
Algebras}, Func. Anal. Appl., 35 (2), 2001, 133--137.

\bibitem {PP} A. Polishchuk, L. Positselski, {\it Quadratic algebras}, University Lecture Series 37 AMS, Providence, RI, 2005.


\bibitem{Pos} L. Positselski, {\it Relation between the Hilbert series of dual quadratic algebras does not imply Koszulity},
Func. Anal. Appl., 29, 1995, 213--217.

\bibitem{P} S. Priddy, {\it Koszul Resolutions,} Trans.  Amer. Math. Soc., 152, 1970, 39--60.

\bibitem{RRZ} M. Reyes, D. Rogalski, J. Zhang, {\it Skew Calabi-Yau algebras and homological identities}, Adv. Math., 264, 2014, 308--354.

\bibitem{Roo} J.-E. Roos, {\it On the characterization of Koszul algebras. Four counter-examples}, C.R. Acad. Sci.
Paris, Ser. I Math, 321, 1995, 15--20.

\bibitem{Ufn0} V. Ufnarovsky, {\it Poincar\'e series of graded algebras}, Math. Notes, 27, 1980, p. 12--18.

\bibitem{Ufn} V. Ufnarovsky, {\it Combinatorial and asymptotic methods in algebra}, Algebra VI, Encycl. Math. Sci., Springer, Berlin, 1995, 1--196.


%SOME PROPERTIES OF NON-COMMUTATIVE REGULAR GRADED RINGS

\end{thebibliography}
\end{document}